\DeclarePairedDelimiter{\floor}{\lfloor}{\rfloor}
\title{Equivariant Vector Bundles on $T$-Varieties}
\author[N.~Ilten]{Nathan Ilten}
\address{Department of Mathematics,
        Simon Fraser University,
        8888 University Drive,
      Burnaby, BC V5A 1S6,
    Canada}
\email{nilten@sfu.ca}
\author[H.~S\"u\ss{}]{Hendrik S\"u\ss{}}
\address{
Hendrik S\"u\ss\\ School of Mathematics
The University of Edinburgh
James Clerk Maxwell Building
The King's Buildings
Mayfield Road
Edinburgh EH9 3JZ
}
\email{suess@sdf-eu.org}
\definecolor{lcolor}{rgb}{1.0,0.3,0.0}
\newcommand{\CC}{\mathbb{C}}
\newcommand{\QQ}{\mathbb{Q}}
\newcommand{\Q}{\mathcal{Q}}
\newcommand{\ZZ}{\mathbb Z}
\newcommand{\NN}{\mathbb N}
\renewcommand{\AA}{\mathbb{A}}
\newcommand{\PP}{\mathbb{P}}
\newcommand{\GG}{\mathbb{G}}
\newcommand{\tv}{\mathbf{X}}
\newcommand{\tV}{\widetilde{\V}}
\newcommand{\tE}{\widetilde{\E}}
\newcommand{\bF}{\mathbf{F}}
\newcommand{\D}{\mathcal D}
\newcommand{\E}{\mathcal E}
\newcommand{\G}{\mathcal G}
\newcommand{\A}{\mathbb A}
\newcommand{\C}{\mathcal C}
\newcommand{\V}{\mathcal V}
\newcommand{\VV}{\mathbb V}
\newcommand{\W}{\mathcal W}
\newcommand{\mcP}{\mathcal P}
\newcommand{\CO}{\mathcal O}
\newcommand{\mcL}{\mathcal L}
\newcommand{\F}{\mathcal F}
\newcommand{\T}{\mathcal T}
\newcommand{\xray}{\mathcal{H}}
\newcommand{\co}{\mathfrak{o}}
\newcommand{\bv}{\mathbf{v}}
\newcommand{\sch}{K\textrm{-}\mathit{Sch}}
\newcommand{\schx}{(\mathit{Sch}/X)}
\DeclareMathOperator{\Pic}{Pic}
\DeclareMathOperator{\Sym}{Sym}
\DeclareMathOperator{\Spec}{Spec}
\DeclareMathOperator{\Proj}{Proj}
\DeclareMathOperator{\rk}{rk}
\DeclareMathOperator{\WDiv}{WDiv}
\DeclareMathOperator{\spec}{Spec}
\DeclareMathOperator{\Ext}{Ext}
\DeclareMathOperator{\id}{id}
\DeclareMathOperator{\Hom}{Hom}
\DeclareMathOperator{\aut}{Aut}
\DeclareMathOperator{\Bl}{Bl}
\DeclareMathOperator{\Char}{char}
\newtheorem{theorem}{Theorem}[section]
\newtheorem*{theorem*}{Theorem}
\newtheorem{question}{Question}[theorem]
\newtheorem{prop}[theorem]{Proposition}
\newtheorem{lemma}[theorem]{Lemma}
\theoremstyle{definition}
\newtheorem{definition}[theorem]{Definition}
\newtheorem{rem}[theorem]{Remark}
\newtheorem{construction}[theorem]{Construction}
\newtheorem{ex}[theorem]{Example}
\newcommand{\fana}{%
 \psset{unit=1cm}
 \begin{pspicture}(-1.8,-2)(1.8,2)%

 \psset{linewidth=1pt}%
 \psline{->}(1,0)
 \psline{->}(-1,0)
 \psline{->}(0,1)
 \psline{->}(0,-1)
   \psgrid[gridwidth=0.3pt,griddots=5,subgriddiv=1,gridlabels=5pt](-1,-1)(1,1)

\end{pspicture}}
\newcommand{\fanb}{%
 \psset{unit=1cm}
 \begin{pspicture}(-1.8,-2)(1.8,2)%

 \psset{linewidth=1pt}%
 \psline{->}(1,0)
 \psline{->}(-1,-1)
 \psline{->}(0,1)
 \psline{->}(1,1)
   \psgrid[gridwidth=0.3pt,griddots=5,subgriddiv=1,gridlabels=5pt](-1,-1)(1,1)
\end{pspicture}}
\newcommand{\prefanb}{%
 \psset{unit=1cm}
 \begin{pspicture}(-1.8,-2)(1.8,2)%
 \psset{linewidth=1pt}%
 \rput(-0.23,0.5){$1$}
 \rput(-0.3,-0.5){$-1$}
 \rput(0.3,0.5){$1'$}
 \psline{->}(.06,0)(.06,1)
 \psline{->}(-.06,0)(-.06,1)
 \psdot(0,0)
 \psline{->}(0,0)(0,-1)
\end{pspicture}}
\newcommand{\systemfanb}{%
 \psset{unit=1cm}
 \begin{pspicture}(-1.8,-2)(1.8,2)%
 \psset{linewidth=1pt}%
 
 \psline{->}(0,0)(0,1)
 \psdot(0,0)
 \rput(-0.2,0.6){$1$}
 \rput(-0.3,0){$0$}
 \rput(-0.3,-0.6){$-1$}
 \psline{->}(0,0)(0,-1)

  \rput(0.8,0.6){$1'$}
\rput(0.8,0){$0$}
\psline{->}(0.5,0)(0.5,1)
 \psdot(0.5,0)
 \psline{->}(0.5,0)(0.5,-1)
 \rput(0.8,-0.6){$-1$}

\end{pspicture}}
\newcommand{\fanbproject}{%
 \psset{unit=1cm}
 \begin{pspicture}(-5,-2.5)(1.8,2)%
 \psset{linewidth=1pt}%
 \psline{->}(-4.06,0)(-4.06,1)
 \psline{->}(-3.93,0)(-3.93,1)
 \psline{->}(-2.5,-0.5)(-3.5,-0.5)
 \rput(-3,-0.2){$\mu$}
 \psdot(0,0)
 \psdot(-4,0)
\psline{->}(-4,0)(-4,-2)
 \psline[linestyle=dashed]{->}(1,0)
 \psline{->}(-2,-2)
 \psline{->}(0,1)
 \psline{->}(1,1)
   \psgrid[gridwidth=0.3pt,griddots=5,subgriddiv=1,gridlabels=5pt](-2,-2)(1,1)
   \psgrid[gridlabelcolor=white,gridwidth=0.3pt,griddots=5,subgriddiv=1,gridlabels=5pt](-5,-2)(-5,-2)(-4,1)
 \psdot[linecolor=black,linewidth=0.7pt,dotstyle=o](-4.06,1)
 \psdot[linecolor=black,linewidth=0.7pt,dotstyle=o](-3.93,1)
 \psdot[linecolor=black,linewidth=0.7pt,dotstyle=o](-4,-2)
\ \psdot[linecolor=black,linewidth=0.7pt,dotstyle=o](-4,-1)
 
\end{pspicture}}
\newcommand{\fanfproject}{%
 \psset{unit=1cm}
 \begin{pspicture}(-4,-2.5)(1.8,2)%
 \psset{linewidth=1pt}%
 \psline{->}(-3.06,0)(-3.06,1)
 \psline{->}(-2.93,0)(-2.93,1)
 \psline{->}(-1.5,-0.5)(-2.5,-0.5)
 \rput(-2,-0.2){$\mu$}
 \psdot(-3,0)
 \psdot(0,0)
 \psline{->}(-3,0)(-3,-2)
 \psline{->}(-1,-2)
 \psline{->}(0,1)
 \psline{->}(1,1)
   \psgrid[gridwidth=0.3pt,griddots=5,subgriddiv=1,gridlabels=5pt](-1,-2)(1,1)
   \psgrid[gridlabelcolor=white,gridwidth=0.3pt,griddots=5,subgriddiv=1,gridlabels=5pt](-4,-2)(-4,-2)(-3,1)
 \psdot[linecolor=black,linewidth=0.7pt,dotstyle=o](-3.06,1)
 \psdot[linecolor=black,linewidth=0.7pt,dotstyle=o](-2.93,1)
 \psdot[linecolor=black,linewidth=0.7pt,dotstyle=o](-3,-2)
\end{pspicture}}
\newcommand{\fanc}{%
 \psset{unit=1cm}
 \begin{pspicture}(-1.8,-2)(1.8,2)%
 \psset{linewidth=1pt}%
 \psline{->}(1,0)
 \psline{->}(-1,-1)
 \psline{->}(0,1)
 \psline{->}(-1,0)
 \psline{->}(0,-1)
   \psgrid[gridwidth=0.3pt,griddots=5,subgriddiv=1,gridlabels=5pt](-1,-1)(1,1)
\end{pspicture}}
\newcommand{\fand}{%
 \psset{unit=1cm}
 \begin{pspicture}(-1.8,-2)(1.8,2)%

 \psset{linewidth=1pt}%
 \psline{->}(1,0)
 \psline{->}(-1,-1)
 \psline{->}(0,1)
 \psline{->}(1,1)
 \psline{->}(-1,0)
 \psline{->}(0,-1)

   \psgrid[gridwidth=0.3pt,griddots=5,subgriddiv=1,gridlabels=5pt](-1,-1)(1,1)
\end{pspicture}}
\newcommand{\dega}{%
 \psset{unit=1cm}
 \begin{pspicture}(-1.8,-2)(1.8,2)%

 \psset{linewidth=1pt}%
   \psgrid[gridwidth=0.3pt,griddots=5,subgriddiv=1,gridlabels=5pt](-1,-1)(1,1)
\psdots[dotstyle=o,dotsize=.3](0,0)
\psdots (1,0)(0,1)(0,0)(-1,0)(0,-1)

\end{pspicture}}
\newcommand{\degb}{%
 \psset{unit=1cm}
 \begin{pspicture}(-1.8,-2)(1.8,2)%

 \psset{linewidth=1pt}%
   \psgrid[gridwidth=0.3pt,griddots=5,subgriddiv=1,gridlabels=5pt](-1,-1)(1,1)
   \psdots (0,0)
(1,0)
(0,1)
(1,-1)
(-1,1)

\end{pspicture}}
\newcommand{\degc}{%
 \psset{unit=1cm}
 \begin{pspicture}(-1.8,-2)(1.8,2)%

 \psset{linewidth=1pt}%
   \psgrid[gridwidth=0.3pt,griddots=5,subgriddiv=1,gridlabels=5pt](-1,-1)(1,1)
\psdots[dotstyle=o,dotsize=.3](0,0)(-1,0)(0,-1)
   \psdots(1,0)
(0,1)
(1,-1)
(-1,1)
(0,-1) 
(-1,0) 
(0,0)

\end{pspicture}}
\newcommand{\degd}{%
 \psset{unit=1cm}
 \begin{pspicture}(-1.8,-2)(1.8,2)%

 \psset{linewidth=1pt}%
   \psgrid[gridwidth=0.3pt,griddots=5,subgriddiv=1,gridlabels=5pt](-1,-1)(1,1)
   \psdots[dotstyle=o,dotsize=.5](0,0)

   \psdots[dotstyle=o,dotsize=.3](1,0)
(0,1)
(1,-1)
(-1,1)
(0,-1) 
(-1,0) 
(0,0)

\psdots(1,0)
(0,1)
(1,-1)
(-1,1)
(0,-1) 
(-1,0) 
(0,0)
\end{pspicture}}
\begin{document}
\begin{abstract}
Let $X$ be a $T$-variety, where $T$ is an algebraic torus. We describe a fully faithful functor from the category of $T$-equivariant vector bundles on $X$ to a certain category of filtered vector bundles on a suitable quotient of $X$ by $T$. We show that if $X$ is factorial, this functor gives an equivalence of categories. This generalizes Klyachko's description of equivariant vector bundles on toric varieties. We apply our machinery to show that vector bundles of low rank on $\PP^n$ which are equivariant with respect to special subtori of the maximal torus must split, generalizing a theorem of Kaneyama. Further applications include descriptions of global vector fields on $T$-varieties, and a study of equivariant deformations of equivariant vector bundles. 
\end{abstract}
\maketitle
\section{Introduction}
In \cite{klyachko:89a}, Klyachko described torus-equivariant vector bundles on toric varieties in terms of collections of filtered vector spaces satisfying certain compatibility conditions.
The primary goal of this article is it to generalize this description to the situation of $T$-equivariant vector bundles on a normal variety $X$ endowed with a faithful action by an algebraic torus $T$. In this situation, the objects corresponding to an equivariant vector bundle will be collections of filtrations  of vector bundles on a suitable quotient of $X$ by $T$.

We now establish the necessary notation to present our first main result. Unless otherwise specified, we will be working over an arbitrary algebraically closed field $K$.
\begin{definition}Let $Z$ be an Artin stack and $\xray$ a set.
	\begin{enumerate}
	  \item A $(Z,\xray)$-filtration consists of a vector bundle $\E$ on $Z$ together with, for every $\rho\in \xray$, a full decreasing filtration $$\ldots\E^\rho(i-1)\supseteq \E^\rho(i)\supseteq\E^\rho(i+1)\ldots$$ of $\E$ by sub-bundles, that is, for all $i\in\ZZ$, $\E^\rho(i)$ and $\E^\rho(i)/\E^\rho(i+1)$ are locally free. Being \emph{full} means that $\E=\E(i)$ for $i\ll 0$ and $\E(i)=0$ for $i\gg 0$.
		\item A \emph{morphism} between $(Z,\xray)$-filtrations $(\E,\{\E^\rho(i)\})$ and 
	$(\F,\{\F^\rho(i)\})$ 	is a map of vector bundles $\phi:\E\to\F$ such that for every $\rho\in \xray$ and every $i\in\ZZ$, $$\phi(\E^\rho(i))\subseteq\F^\rho(i).$$
\end{enumerate}
\end{definition}

Now, let $T$ be an algebraic torus. Consider a $T$-variety $X$, that is, a normal variety $X$ with a faithful $T$-action. Let $W\subset X$ be the $T$-invariant open subset consisting of those points with finite stabilizers.
The quotient stack $Z=[W/T]$ is a tame Artin stack with finite stabilizers \cite{tame}, and if $\Char K=0$, it is Deligne-Mumford \cite[7.17]{vistoli:89a}.  Let $\pi:W\to Z$ be the quotient map. 
There are finitely many $T$-invariant prime divisors $D_i$ of $X$ not meeting $W$; let them be indexed by a set $\xray$.

  In this situation, we may identify the class of so called \emph{$X$-compatible} $(Z,\xray)$-filtrations, see Definition \ref{def:compatible}. Essentially, on certain open subsets of $Z$ we impose the existence of decompositions of $\E$ into sums of line bundles, such that the elements of the filtrations are given by direct sums of some of these line bundles.

\begin{theorem}\label{thm:stacky}
There is a fully faithful functor $\bF$ which embeds the category of $T$-equivariant vector bundles on $X$ into the category of $X$-compatible $(Z,\xray)$-filtrations. If $X$ is factorial, this functor is an equivalence of categories.
\end{theorem}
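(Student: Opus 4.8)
The plan is to construct the functor $\bF$ explicitly via descent, and then analyze faithfulness, fullness, and essential surjectivity separately. First I would describe $\bF$ on objects. Given a $T$-equivariant vector bundle $\E$ on $X$, restrict it to $W$; since the $T$-action on $W$ has finite stabilizers and $Z=[W/T]$, equivariant descent identifies $T$-equivariant sheaves on $W$ with quasi-coherent sheaves on $Z$, and the vector bundle $\E|_W$ descends to a vector bundle $\F$ on $Z$. To build the filtrations indexed by $\rho\in\xray$, work locally: near the generic point of the divisor $D_\rho\subset X$, choose a $T$-invariant affine neighborhood on which $T$ acts with a one-dimensional quotient direction transverse to $D_\rho$, so that $\E$ decomposes into $T$-weight pieces; the subsheaf $\E^\rho(i)$ is generated by sections vanishing to order $\geq i$ along $D_\rho$ in the appropriate sense (the $\ord_{D_\rho}$-valuation, twisted by the $T$-weight). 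These glue over $Z$ because the construction is canonical, and the compatibility/fullness of the filtration, together with the $X$-compatibility condition of Definition \ref{def:compatible}, is forced by the existence of local weight decompositions on the affine charts of the toric-like structure near each $D_\rho$. On morphisms, $\bF$ sends an equivariant bundle map to the descended map; equivariance guarantees compatibility with all the filtrations.

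Next, faithfulness: a $T$-equivariant map $\E\to\E'$ vanishes iff it vanishes on $W$ (since $W$ is dense and the bundles are torsion-free), iff its descent to $Z$ vanishes; so $\bF$ is faithful. Fullness requires showing that a morphism of $X$-compatible filtrations $\phi:\F\to\F'$ lifts to an equivariant map $\E\to\E'$. Pulling $\phi$ back along $\pi:W\to Z$ gives a $T$-equivariant map $\E|_W\to\E'|_W$; the task is to extend it across the divisors $D_\rho$. Here the filtration-compatibility of $\phi$ is exactly what controls the poles/zeros of the extension along each $D_\rho$: the condition $\phi(\F^\rho(i))\subseteq\F'^\rho(i)$ translates, via the local weight decompositions, into the statement that the matrix entries of $\phi$ in adapted local frames have the correct order of vanishing to extend regularly over $D_\rho$. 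Since $X$ is normal, a section regular in codimension one on an open set whose complement has codimension $\geq 2$ extends; combined with regularity along each $D_\rho$ this yields a global equivariant extension, giving fullness (and this argument needs only normality, not factoriality).

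Finally, essential surjectivity when $X$ is factorial: given an $X$-compatible $(Z,\xray)$-filtration $(\F,\{\F^\rho(i)\})$, let $\E_W=\pi^*\F$, a $T$-equivariant vector bundle on $W$. I must extend $\E_W$ to a $T$-equivariant vector bundle on all of $X$. Work near each $D_\rho$: on a $T$-invariant affine chart, the $X$-compatibility datum provides a decomposition of $\F$ into line bundles, and the filtration labels assign to each summand an integer; using these one builds a $T$-equivariant locally free extension of $\E_W$ across $D_\rho$ by prescribing the weight of each summand relative to $\ord_{D_\rho}$ — this is the reverse of the order-of-vanishing construction above. Gluing these local extensions over the finitely many $D_\rho$ and over $W$ produces a $T$-equivariant coherent sheaf $\E$ on $X$ extending $\E_W$, reflexive because it is locally free in codimension one and $X$ is normal; factoriality of $X$ (so that reflexive sheaves of rank matching a locally free one, built from the above line-bundle data, are actually locally free — e.g. via the correspondence with Cartier divisors for the rank-one summands and a local triviality check) upgrades $\E$ to a vector bundle with $\bF(\E)\cong(\F,\{\F^\rho(i)\})$. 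The main obstacle I anticipate is this last point: checking that the glued extension is genuinely locally free rather than merely reflexive, which is where factoriality is essential and where the $X$-compatibility condition must be used in full force to guarantee that the local models near each $D_\rho$ patch to something locally trivial.
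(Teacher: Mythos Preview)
Your construction of $\bF$ and the arguments for faithfulness and fullness are essentially the paper's: descend $\V|_W$ to $Z$ by taking invariant pushforward, define $\E^\rho(i)$ through order of vanishing along $D_\rho$, and for fullness pull a filtered morphism back to $W$, extend across each $D_\rho$ using the filtration inequality, then across codimension~$\geq 2$ by normality. No issues there.

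The gap is in essential surjectivity, and it is precisely the one you flag at the end. Your strategy---extend across each $D_\rho$ separately, glue to something locally free in codimension one, pass to the reflexive hull, and then invoke factoriality to upgrade to a vector bundle---does not work as stated: factoriality only guarantees that \emph{rank-one} reflexive sheaves are invertible, and there is no analogous statement in higher rank (reflexive sheaves on smooth varieties need not be locally free). The paper never passes through a reflexive intermediary. Instead it works orbit by orbit: for every $T$-orbit $\co\subset X$, including those lying in the intersection of several $D_\rho$, the $X$-compatibility condition of Definition~\ref{def:compatible} is indexed by $\co$ and supplies a decomposition $\E|_U=\bigoplus_{[u]\in M_\co}\E_{[u]}$ into line bundles on a neighborhood $U\supset Z_\co$ that \emph{simultaneously} diagonalizes all filtrations $\E^\rho$ for $\rho\in\xray_\co$. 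Pull this back to $V=\pi^{-1}(U)\subset W$ and extend each line-bundle summand to an invariant affine chart $Y\subset X$ containing $\co$; this extension step is where factoriality is actually used (identify the line bundle with a Weil divisor on $Y\cap W$, take its closure in $Y$, and use Weil$=$Cartier). Twisting the summands by the characters $\chi^{-u}$ produces a direct sum of line bundles on $Y$---already a vector bundle, not merely reflexive---whose image under $\bF$ matches the given filtration over $U$. These local bundles then glue because $\bF$ has already been shown to be fully faithful. So the compatibility condition must be exploited at every orbit, not just near generic points of the $D_\rho$, and factoriality enters as a local line-bundle extension device (the paper's condition~\eqref{dagger}) rather than as a global reflexive-to-locally-free principle.
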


We construct the functor $\bF$ in Section \ref{sec:gen}, where we also show that it is fully faithful. In Section \ref{sec:compatible}, we show that it is essentially surjective when $X$ is factorial, i.e. when every Weil divisor is  Cartier.
We actually prove a slightly generalized version of the above theorem to deal with equivariant bundles on the product of a $T$-variety with an arbitrary scheme, see Theorems \ref{thm:F} and \ref{thm:compatible}.

Many natural operations involving vector bundles correspond to easily describable operations on $(Z,\xray)$-filtrations: taking global sections (Section \ref{sec:gs}),  forming equivariant line bundles from invariant divisors (Section \ref{sec:lb}), and forming direct sums, tensor and symmetric and wedge products, and duals (Section \ref{sec:sc}).
In the case that $X$ is smooth, the tangent bundle $\T_X$ comes with a $T$-equivariant structure; the corresponding $(Z,\xray)$-filtration has a natural description involving extensions of the tangent bundle of $Z$, see Theorem \ref{thm:tangent}. This description can be made quite explicit for the case that $X$ is rational and $T$ acts with complexity one, that is, $\dim T=\dim X-1$. As a first application of our machinery, we compute global vector fields on smooth complexity-one Fano threefolds, see Example \ref{ex:fanotan}.

A second application of our machinery concerns the splitting behaviour of equivariant vector bundles on projective space. Kaneyama has shown that any \emph{toric} vector bundle of rank $r<n$ on $\PP^n$ splits as a sum of line bundles \cite{kaneyama:88a}. We generalize this result in Section \ref{sec:splitting} as follows:
let $m,n\in\NN$ with $1\leq m\leq  n$ and 
consider the $(K^*)^m$-action on $\PP^n$ such that the $i$th coordinate of $(K^*)^m$ acts on the $i$th homogeneous coordinate of $\PP^n$ by multiplication.

\begin{theorem}
	Any $(K^*)^m$-equivariant vector bundle on $\PP^n$ whose  rank is smaller than the mininum of $n$ and $m+3$  splits  as a direct sum  of line bundles.
\end{theorem}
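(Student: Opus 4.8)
The plan is to induct on $m$ using the restriction maps coming from the tower of subtori $(K^*)^1 \subset (K^*)^2 \subset \dots \subset (K^*)^m$ acting on $\PP^n$ as described, combined with Theorem~\ref{thm:stacky}. When $m=n$ this is exactly Kaneyama's theorem (a toric bundle of rank $<n$ on $\PP^n$ splits), so we may assume $m<n$; moreover by induction on $m$ we may assume the statement known for $(K^*)^{m-1}$-equivariant bundles of rank $<\min(n,m+2)$. Given a $(K^*)^m$-equivariant bundle $\E$ on $\PP^n$ of rank $r<\min(n,m+3)$, restrict the torus action to the subtorus $(K^*)^{m-1}$. If $r<m+2$ as well, the inductive hypothesis immediately gives that $\E$ splits as an $(K^*)^{m-1}$-equivariant bundle, hence splits as a plain vector bundle, and one then checks the $(K^*)^m$-structure respects this splitting; so the only genuinely new case is $r=m+2<n$, i.e. a rank-$(m+2)$ bundle equivariant for $(K^*)^m$, which after restriction to $(K^*)^{m-1}$ falls just outside the previous range.

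For that borderline case I would pass to the filtered-bundle description of Theorem~\ref{thm:stacky}. Here $X=\PP^n$ is smooth (hence factorial), $T=(K^*)^m$, the generic stabilizer is trivial so $W$ is the locus where the action is free, the quotient stack $Z=[W/T]$ is a variety, and $\xray$ indexes the boundary divisors --- one sees $Z\cong \PP^{\,n-m}$ (roughly, killing the $T$-action on the first $m$ coordinates collapses a $\PP^{m-1}$ and leaves a $\PP^{n-m}$) and $\xray$ consists of the $m+1$ images of the coordinate hyperplanes $\{x_0=0\},\dots,\{x_m=0\}$ together with appropriate extra rays. So $\bF(\E)$ is a rank-$(m+2)$ vector bundle $\G$ on $\PP^{\,n-m}$ equipped with $m+1$ full decreasing filtrations by sub-bundles, subject to the $X$-compatibility conditions. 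The key numerical input is that $n-m\geq 1$ and $\rk\G = m+2 \le (m+1)+1$: the number of filtrations plus one is at least the rank. One then argues, exactly in the spirit of Klyachko's and Kaneyama's splitting criteria, that such a configuration of that many filtrations on a bundle of that small a rank is forced to be a direct sum of filtered line bundles: pick a generic point of $Z$, look at the induced filtrations on the fibre (a vector space of dimension $m+2$ with $m+1$ compatible flags), and use a counting/general-position argument to produce a common splitting line bundle, then peel it off and induct on the rank. Compatibility of the filtrations with the $Z$-structure propagates the fibrewise splitting to a global direct sum decomposition of $\G$ as an $X$-compatible filtered bundle, and applying the equivalence $\bF$ backwards yields a splitting of $\E$ into equivariant line bundles.

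The main obstacle is the last step: going from a \emph{fibrewise} common eigenline for the $m+1$ flags at a general point to a genuine global sub-line-bundle of $\G$ that is simultaneously a sub-bundle of every filtration step $\G^\rho(i)$, i.e. respecting $X$-compatibility. The fibrewise linear-algebra fact (that $m+1$ flags in an $(m{+}2)$-dimensional space admit a common coordinate line, once one accounts for the constraint that the flags come from a single torus action) needs to be stated and proved carefully --- it is the analogue of the combinatorial heart of Kaneyama's argument --- and then one must check that the $X$-compatibility axioms are exactly strong enough to spread this out globally and to ensure the quotient $\G/\mcL$ again satisfies the hypotheses so the rank induction closes. I would also need to handle the low-rank base cases $r\le 2$ separately (rank-$1$ is trivial, rank-$2$ reduces to an elementary filtration argument) and to verify the reduction in the first paragraph --- that an equivariant bundle which splits after restricting to a subtorus splits equivariantly for the full torus --- which follows by decomposing the relevant $\Hom$-spaces into weight spaces.
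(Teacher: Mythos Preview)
Your proposal has a genuine gap at exactly the point you yourself flag as ``the main obstacle.'' In your setup with the full $(K^*)^m$, the quotient is indeed $Z\cong\PP^{n-m}$, but $\xray$ has $m$ elements (the hyperplanes $\{x_i=0\}$, $1\le i\le m$), not $m+1$; the remaining bad locus $\{x_0=x_{m+1}=\dots=x_n=0\}$ has codimension $n-m+1\ge 2$. More seriously, the $X$-compatibility condition evaluated at the various $T$-orbits only produces \emph{local} splittings of $\G$ on open pieces of $Z$, each compatible with a proper subset of the $m$ filtrations: there is no orbit $\co$ with both $Z_\co=Z$ and $\xray_\co=\xray$. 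So you never obtain a single global decomposition simultaneously diagonalising all filtrations, and the ``peel off a common line bundle'' step has nothing to grab onto. Your induction scheme is also tangled: you induct on $m$ but treat $m=n$ (Kaneyama) as if it were the base, whereas the actual base $m=1$ is never established; and in any event the entire content is pushed into the borderline case $r=m+2$, which you do not prove.

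The paper's argument is organised quite differently and avoids this difficulty. It inducts on the \emph{rank}, not on $m$, and restricts not to the full subtorus but to a \emph{single} $K^*$ (the $m$th factor). For this $K^*$ the quotient is $\PP^{n-1}$ and $\xray$ consists of the single hyperplane $H=\{x_m=0\}$. The crucial geometric point is that the isolated fixed point $P_0=[0:\dots:0:1:0:\dots:0]$ lies in the closure of \emph{every} orbit of $W$, so $Z_{P_0}$ is all of $\PP^{n-1}$; the compatibility condition at $P_0$ therefore forces the quotient bundle $\E$ to split \emph{globally} as a sum of line bundles on $\PP^{n-1}$. With only one filtration in play, either it is trivial (done), or a nontrivial step $\F=\E(\lambda)$ is automatically a global sub-bundle. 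One then shows $\F$ and $\E/\F$ split---using the residual $(K^*)^{m-1}$-action on $\PP^{n-1}$ and the inductive hypothesis on rank when $m>1$, or the Evans--Griffith criterion when $m=1$---and concludes via $\Ext^1$-vanishing on $\PP^{n-1}$. The choice of a one-dimensional subtorus with an attractive fixed point is precisely what turns the local compatibility into a global splitting and makes the induction close; your approach lacks an analogue of this step.
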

\noindent Hartshorne has conjectured that for $n\geq 7$, any rank two vector bundle on $\PP^n$ splits \cite[Conjecture 6.3]{hartshorne:74a}. Our theorem shows that this conjecture is equivalent to showing that any rank two vector bundle on $\PP^n$, $n\geq 7$, admits a $K^*$-equivariant structure as above.

Many examples of $T$-equivariant bundles come from considering a toric vector bundle $\V$, but restricting the action of the big torus to that of a subtorus $T$. In this situation, the quotient $Z$ inherits an action by a quotient torus, and the corresponding $(Z,\xray)$-filtration is equivariant with respect to this action. It turns out that this data may be described combinatorially, and in Section \ref{sec:downgrades} we show how to translate the combinatorial data describing $\V$ as a toric vector bundle to the combinatorial data describing $Z$ and the corresponding $(Z,\xray)$-filtration.

As a final application of our machinery, we discuss families of $T$-equivariant vector bundles in Section \ref{sec:def}. 
Given a family of equivariant bundles corresponding to some family of filtrations on the quotient, the bundles appearing as fibers in the family correspond to fibers of the filtrations, see Proposition \ref{prop:family}. We use this to construct non-toric deformations of the tangent bundle on toric del Pezzo surfaces. Along the way, we give a combinatorial description of the obstruction space to deformations of the tangent bundle on smooth toric varieties, see Theorem \ref{thm:obstructions}.

It is worth noting that the category of equivariant vector bundles on a $T$-variety $X$ is tautologically equivalent to the category of vector bundles on the quotient stack $[X/T]$. However, our stack $[W/T]$ is in general much more nicely behaved then $[X/T]$; in particular, it has a coarse moduli space, and one can do  geometry on it. 
\subsection*{Acknowledgments} We would like to thank Piotr Achinger, Milena Hering, Andreas Hochenegger, Andrew Niles, Arthur Ogus, and Martin Olsson for helpful comments and conversations.
\section{Preliminaries}

\subsection{Stacks}
We briefly recall the necessary notions related to stacks, and motivate them. An excellent introduction is  \cite[Section 7]{vistoli:89a}, which we follow here. Readers familiar with the language of stacks may skip to Section \ref{sec:actions}. 

Let $\sch$ be the category of schemes over $K$.

\begin{definition}
  A \emph{category over $\sch$} is a category $\C$ together with a functor $p_\C:\C\to \sch$.
A morphism between two such categories $\C$, $\C'$ is a functor $\F:\C\to \C'$ such that $\F\circ p_{\C'}=p_\C$.
\end{definition}

\begin{ex}[{\cite[(7.2)]{vistoli:89a}}]
Any scheme $X$ determines a category over $\sch$, which we also denote by $X$. Indeed, consider the category $\schx$ of schemes over $X$, with the natural forgetful functor $\schx\to\sch$. We can recover $X$ as the image in $\sch$ of the terminal object of $\schx$. 
\end{ex}

We can thus embed the category of $K$-schemes in the two-category of categories over $\sch$.  The primary motivation for doing so comes from moduli problems: many important moduli  problems can be represented by a category over $\sch$, but not by an actual scheme. 
The typical category over $\sch$ is very far away from being a scheme, but there are special classes of categories over $\sch$ which exhibit increasingly scheme-like behaviour: stacks \cite[(7.3)]{vistoli:89a}, Artin stacks \cite[(7.14)]{vistoli:89a}, tame stacks \cite{tame}, and Deligne-Mumford stacks \cite[(7.14)]{vistoli:89a}.
For our purposes, the reader may assume that any stack is a quotient stack, which we define below.
Precise definitions of the other notions are found in the above references. 

\begin{definition}[{\cite[(7.17)]{vistoli:89a}}]
  Let $G$ be an algebraic group, and $X$ a scheme with regular $G$-action. We define the \emph{quotient stack} $[X/G]$ as follows: objects of $[X/G]$ are $G$-principal bundles $E\to Y$ with a $G$-equivariant morphism $E\to X$. Morphisms in $[X/G]$ are morphisms of $G$-bundles compatible with the maps to $X$. The functor $[X/G]\to \sch$ maps a $G$-principal bundle $E\to Y$ to $Y$. 
\end{definition}  

Such a quotient stack comes with a natural quotient morphism $\pi:X\to [X/G]$. Indeed, consider the trivial $G$-bundle over $X$, with equivariant structure map to $X$ given by the $G$-action. Then $\pi$ maps any $X$-scheme $Y\to X$ to the pullback of this bundle along $Y\to X$. 
This morphism models the quotient morphism $X\to X/G$ from the situation where $G$ acts freely on $X$. In fact, if $G$ acts freely, then $[X/G]$ is isomorphic to the scheme $X/G$. Furthermore, any time that $X$ admits a categorical quotient $X/G$ (as will be the case in the situation relevant to us), then there is a uniquely determined morphism $[X/G]\to X/G$ commuting with the quotient morphisms.

\begin{definition}
  Let $[X/G]$ be a quotient stack. A \emph{cover} for $[X/G]$ is a $G$-principal bundle $E\to Y$ together with a surjective $G$-equivariant map $E\to X$.
\end{definition}

Quasicoherent sheaves on a quotient stack $[X/G]$ are defined by giving, for each cover $E\to Y$, a quasicoherent sheaf on $Y$, along with isomorphisms between pullbacks of these sheaves satisfying a cocycle condition, see \cite[(7.18)]{vistoli:89a} for a precise definition.  

\subsection{Torus Actions}\label{sec:actions}
By $T$ we will always denote an algebraic torus with character lattice $M$. The dual lattice of one-parameter subgroups will be denoted by $N$.
By a $T$-variety, we will always mean a separated, integral, normal scheme of finite type over $K$ with a faithful $T$-action.
The \emph{complexity} of a $T$-variety $X$ is defined as $\dim X-\dim T$.
By a \emph{vector bundle} we simply mean a locally free sheaf of $\CO_X$-modules.

Let $G$ be an algebraic group which acts regularly on an Artin stack $X$. Recall that a $G$-equivariant sheaf on $X$ is a pair $(\F,\theta)$ consisting of a sheaf $\F$ on $X$ and an isomorphism $\theta: \pi^* \F \to \rho^* \F$ satisfying a natural cocycle condition, see \cite[Section 0.2]{bernstein:94a}. Here, $\pi$ is the projection $G\times X\to X$ and $\rho$ is the map $G\times X\to X$ sending $(g,y)$ to $g^{-1}y$. 
A morphism of equivariant sheaves is a morphism of the underlying sheaves which commutes with the respective isomorphisms.
The situation of interest to us, namely when $\F$ is a vector bundle, may be described geometrically as well. Indeed, let $\VV$ be the geometric vector bundle over a variety $X$ whose sheaf of sections is the locally free sheaf $\V$. Giving an equivariant structure on $\V$ is equivalent to giving a regular action of $G$ on $\VV$ which commutes with the projection to $X$ and the action on $X$, and which is linear on the fibers of $\VV$.

We now state some basic results we will need throughout the article.

\begin{prop}[cf. {\cite[Proposition 2.2]{payne:08a}}]\label{prop:cover}
Let $X$ be a $T$-variety, $S$ a  scheme, and $\V$ a $T$-equivariant vector bundle on $X\times S$. 
There exists an invariant affine open cover $\{U_\alpha\}$ of $X\times S$ such that the restriction of $\V$ to any $U_\alpha$ is  a direct sum of trivial equivariant line bundles.
\end{prop}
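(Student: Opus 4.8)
The plan is to reduce to the case of an affine $T$-variety, translate equivariant bundles into commutative algebra, and prove a graded refinement of the statement ``a finitely generated projective module is locally free.'' By Sumihiro's theorem $X$ has a cover by $T$-invariant affine open subsets; choosing in addition an affine open cover of $S$, it suffices to treat the case $X=\Spec A$, $S=\Spec B$, where $A=\bigoplus_{m\in M}A_m$ is the $M$-graded coordinate ring encoding the $T$-action and $B$ is an arbitrary $K$-algebra. Then $X\times S=\Spec R$ with $R=A\otimes_K B$, an $M$-graded ring with $B\subseteq R_0$. Under the equivalence between $T$-equivariant quasicoherent sheaves on $\Spec R$ and $M$-graded $R$-modules, $\V$ corresponds to a finitely generated $M$-graded projective $R$-module $P$. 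For a homogeneous element $h\in R$ the basic open set $D(h)$ is $T$-invariant and affine, and a graded isomorphism $P_h\cong\bigoplus_i R_h(m_i)$ is precisely an equivariant isomorphism of $\V|_{D(h)}$ with a direct sum of trivial equivariant line bundles. So everything reduces to covering $\Spec R$ by basic opens $D(h)$, with $h$ homogeneous, on which $P$ becomes graded free.

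For the local statement, fix $\mathfrak p\in\Spec R$ and let $\mathfrak q\subseteq\mathfrak p$ be the ideal generated by the homogeneous elements of $\mathfrak p$; this is a homogeneous prime. Let $R_{(\mathfrak q)}$ be the localization of $R$ at the multiplicative set of homogeneous elements not in $\mathfrak q$. Its homogeneous non-units form the ideal $\mathfrak q R_{(\mathfrak q)}$, so $R_{(\mathfrak q)}$ is graded-local, the ring $(R_{(\mathfrak q)})_0$ is local, and $R_{(\mathfrak q)}/\mathfrak q R_{(\mathfrak q)}$ is a graded field. The module $P_{(\mathfrak q)}:=P\otimes_R R_{(\mathfrak q)}$ is finitely generated, graded, and projective. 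Lifting a homogeneous basis of the graded vector space $P_{(\mathfrak q)}/\mathfrak q P_{(\mathfrak q)}$ to homogeneous elements of $P_{(\mathfrak q)}$ gives a degree-preserving map from a finite graded free module onto $P_{(\mathfrak q)}$ (surjectivity by the graded Nakayama lemma, which applies since $(R_{(\mathfrak q)})_0$ is local), and this map splits because $P_{(\mathfrak q)}$ is projective; the kernel dies modulo $\mathfrak q$ and hence vanishes by graded Nakayama again. Thus $P_{(\mathfrak q)}\cong\bigoplus_i R_{(\mathfrak q)}(m_i)$ as graded modules.

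Now spread out. Since $R_{(\mathfrak q)}=\varinjlim_h R_h$ over the homogeneous $h\notin\mathfrak q$ (ordered by divisibility) and $P$, being finitely generated projective, is finitely presented, the graded isomorphism above together with its inverse are already defined over a single $R_h$ with $h\notin\mathfrak q$ homogeneous; that is, $P_h\cong\bigoplus_i R_h(m_i)$ as graded $R_h$-modules. Because $h$ is homogeneous, $h\in\mathfrak p$ would force $h\in\mathfrak q$; as $h\notin\mathfrak q$ we get $h\notin\mathfrak p$, i.e.\ $\mathfrak p\in D(h)$. Letting $\mathfrak p$ range over $\Spec R$, the invariant affine opens $D(h)$ cover $\Spec R$ (a finite subcover exists by quasicompactness of $\Spec R$), and assembling these over the charts chosen in the first paragraph produces the desired cover of $X\times S$ on each piece of which $\V$ restricts to a direct sum of trivial equivariant line bundles.

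The main obstacle is the middle step together with the homogeneity bookkeeping in the last one: one must localize only at homogeneous elements so that the resulting open sets remain $T$-invariant, invoke the graded Nakayama lemma rather than its ungraded counterpart, and observe that passing from $\mathfrak p$ to its homogeneous part $\mathfrak q$ loses no points of $D(h)$ precisely because $h$ is homogeneous. The remaining ingredients---Sumihiro's theorem and the dictionary between equivariant sheaves and graded modules---are standard and present no difficulty.
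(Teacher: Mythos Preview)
Your proof is correct and follows essentially the same approach as the paper's, recast in the language of graded commutative algebra: your homogeneous prime $\mathfrak q\subseteq\mathfrak p$ is precisely the generic point of the $T$-orbit through $\mathfrak p$, your graded Nakayama step produces the semi-invariant basis the paper simply asserts exists, and your spreading-out to $D(h)$ is the algebraic content of the paper's observation that the non-basis locus is closed and $T$-invariant (with ``$h$ homogeneous and $h\notin\mathfrak q\Rightarrow h\notin\mathfrak p$'' matching the paper's ``any point of the orbit lies in every invariant open containing its generic point''). The paper's version is terser and more geometric; yours fills in the commutative-algebraic details it leaves implicit.
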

\begin{proof}
Let $x$ be the generic point of some $T$-orbit $\co$ in $X\times S$. Let $s_1,\ldots,s_k$ be semi-invariant sections of $\V_x$ such that $s_1(x),\ldots,s_k(x)$ form a basis of the vector space $\V_x\otimes K(x)$, where $K(x)$ is the residue field of the point $x$. The set of points $x'$ such that $s_1(x'),\ldots,s_k(x')$ is not a basis of $\V_{x'}\otimes K(x')$ is closed and $T$-invariant. Hence, $x$ is contained in an open $T$-invariant set $U$ on which $\V$ decomposes as the direct sum of the line bundles generated by the $s_i$. 

By Sumihiro's theorem \cite{sumihiro:74a}, $X$ and hence $X\times S$ have $T$-invariant affine covers.
Intersecting this cover with the above open invariant set $U$ gives us a quasi-affine invariant set $U'$ containing $x$ on which $\V$ decomposes. But any quasi-affine $T$-invariant set may be covered by invariant affines. To complete the proof, note that an arbitrary point $x'\in\co$ is contained in every invariant open subset which contains $x$.
\end{proof}

Consider a $T$-equivariant sheaf $\F$ on an Artin stack with trivial $T$-action. The subsheaf of semi-invariant sections of weight $u\in M$ will be denoted by $\F_u$. In particular, the subsheaf of $T$-invariant sections will be denoted by $\F_0$.
\begin{lemma}\label{lemma:stacky}
Let $T$ be a torus acting on a scheme $W$. Let $Z$ be the quotient stack $Z=[W/T]$ with quotient map $\pi:W\to Z$, and let $\mcL$ be a $T$-equivariant line bundle on $W$. 
\begin{enumerate}
	\item $\pi^*(\pi_*(\mcL)_0)$ is canonically isomorphic to $\mcL$.
	\item For every $u\in M$, $\pi_*(\mcL)_u$ is invertible.
	\item For every $u,u'\in M$, there is a canonical isomorphism $\pi_*(\mcL)_u\otimes\pi_*(\CO_W)_{u'}\cong \pi_*(\mcL)_{u+u'}$.
\end{enumerate}
\end{lemma}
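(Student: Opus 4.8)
The plan is to deduce all three statements from a single computation of $\pi^{*}\pi_{*}\mcL$. I will use throughout that quasicoherent sheaves on $Z=[W/T]$ are the same as $T$-equivariant quasicoherent sheaves on $W$, that under this identification $\pi^{*}$ is the functor forgetting the equivariant structure, and that $\pi\colon W\to Z$ is a $T$-torsor, hence faithfully flat, affine, and $T$-equivariant for the trivial $T$-action on $Z$. Consequently $\pi_{*}\mcL$ is $T$-equivariant on $Z$ with trivial action, i.e.\ $M$-graded, with quasicoherent graded pieces $(\pi_{*}\mcL)_{u}$. Since ``locally free of rank one'' and ``isomorphism'' are fppf-local on the base, it will then suffice to verify the claims after applying $\pi^{*}$, once the canonical maps asserted in (1) and (3) have been written down.

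By flat base change along $\pi$ (legitimate, as $\pi$ is flat and affine), using the standard identification $W\times_{Z}W\cong T\times W$ of a torsor with its trivialization over itself — under which the two projections $W\times_{Z}W\to W$ become the projection $T\times W\to W$ and the twisted action $(t,w)\mapsto t^{-1}w$ — together with the projection formula and the isomorphism between the two pullbacks of $\mcL$ to $T\times W$ supplied by the equivariant structure, one obtains
\[
\pi^{*}\pi_{*}\mcL\;\cong\;\mcL\otimes_{K}K[T]\;=\;\bigoplus_{u\in M}\mcL\cdot\chi^{u}.
\]
Keeping track of the $T$-actions, the $M$-grading on the left induced from that of $\pi_{*}\mcL$ is the grading of the right-hand side by characters; hence $\pi^{*}\big((\pi_{*}\mcL)_{u}\big)\cong\mcL$ for every $u\in M$, and in particular, taking $\mcL=\CO_{W}$, $\pi^{*}\big((\pi_{*}\CO_{W})_{u}\big)\cong\CO_{W}$.

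Statement (2) is now immediate: $(\pi_{*}\mcL)_{u}$ is invertible because its pullback along the fppf cover $\pi$ is, and invertibility descends. For the other two, let $\overline{\mcL}$ denote the line bundle on $Z$ corresponding to the equivariant bundle $\mcL$, so that $\pi^{*}\overline{\mcL}=\mcL$; the projection formula gives a canonical isomorphism $\pi_{*}\mcL\cong\overline{\mcL}\otimes_{\CO_{Z}}\pi_{*}\CO_{W}$ of $M$-graded sheaves, with $\overline{\mcL}$ sitting in degree $0$, whence $(\pi_{*}\mcL)_{u}\cong\overline{\mcL}\otimes(\pi_{*}\CO_{W})_{u}$. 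Applying this with $u=0$ and using that the natural map $\CO_{Z}\to(\pi_{*}\CO_{W})_{0}$ is an isomorphism (it becomes the identity after $\pi^{*}$, by the computation above, and $\pi$ is faithfully flat), we get $(\pi_{*}\mcL)_{0}\cong\overline{\mcL}$ and therefore a canonical isomorphism $\pi^{*}\big((\pi_{*}\mcL)_{0}\big)\cong\mcL$; this is (1). For (3), the $\CO_{Z}$-algebra structure on $\pi_{*}\CO_{W}$ furnishes a multiplication map $(\pi_{*}\CO_{W})_{u}\otimes(\pi_{*}\CO_{W})_{u'}\to(\pi_{*}\CO_{W})_{u+u'}$; tensoring it with $\overline{\mcL}$ and using the displayed isomorphism yields the natural map $(\pi_{*}\mcL)_{u}\otimes(\pi_{*}\CO_{W})_{u'}\to(\pi_{*}\mcL)_{u+u'}$ of (3). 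It is an isomorphism because, by descent of isomorphisms along the fppf cover $\pi$, it suffices to check this after $\pi^{*}$, where it becomes the evident isomorphism $\CO_{W}\chi^{u}\otimes_{\CO_{W}}\CO_{W}\chi^{u'}\xrightarrow{\ \sim\ }\CO_{W}\chi^{u+u'}$.

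The step demanding the most care is the bookkeeping of equivariant structures and $M$-gradings behind the displayed computation: one must verify that the torsor identification $W\times_{Z}W\cong T\times W$, flat base change, the equivariant identification of the two pullbacks of $\mcL$ to $T\times W$, and the projection formula are all compatible with the $T$-actions, so that the isomorphism really is graded with the asserted grading, and that the resulting isomorphism in (1) genuinely deserves to be called canonical. Everything else — fppf descent of invertibility and of isomorphisms, the projection formula, and the algebra structure on $\pi_{*}\CO_{W}$ — is then formal.
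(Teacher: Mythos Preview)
Your proof is correct and follows essentially the same approach as the paper, which simply notes that by the construction of $[W/T]$ one may reduce to the case where $W\to Z$ is a $T$-principal bundle, and then declares all three claims ``well known''; you have in effect written out that well-known argument via flat base change, the projection formula, and fppf descent.
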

\begin{proof}
	By the construction of $[W/T]$, see e.g. \cite[7.17]{vistoli:89a}, it suffices to show that each of the above claims holds when $W\to Z$ is a $T$-principal bundle. But in this case, all claims are well known. 
\end{proof}

Let $X$ be a $T$-variety, $W\subset X$ the open subset on which $T$ has finite stabilizers.

\begin{lemma}\label{lemma:tdiv}
Let $D$ be a prime $T$-invariant divisor in $X$ not meeting $W$. Then the stabilizer of $T$ at the generic point of $D$ is one-dimensional. If $\rho_D$ is the associated primitive co-character in $N$, then any rational function $f$ of weight $u$ vanishes to order $-u(\rho_D)$ on $D$.
\end{lemma}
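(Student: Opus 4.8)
The plan is to reduce everything to the toric case by passing to a generic fibre of the quotient of $X$ by $T$. First I would use Sumihiro's theorem to choose a $T$-invariant affine open $U=\Spec A$ meeting $D$, with $A$ an $M$-graded normal domain. Faithfulness of the $T$-action forces the weights occurring in $A$ to generate the whole lattice $M$, so that semi-invariant rational functions of every weight $u\in M$ exist on $X$; it also ensures that the rational quotient $Y$ of $X$ by $T$ (so $K(Y)=K(X)^T$) has $\dim Y=\dim X-\dim T$. Writing $D\cap U=V(\p)$ for a homogeneous height-one prime $\p$, the local ring $\CO_{X,\eta_D}=A_\p$ is a DVR with valuation $\ord_D$.

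Next I would pin down $s:=\dim T_{\eta_D}$. Since $D$ misses $W$, its generic point has positive-dimensional isotropy, so $s\ge 1$. For $s\le 1$ I use a dimension count: the orbits $Tx$ with $x\in D$ general have dimension $\dim T-s$ and, being pairwise distinct as orbits of $X$, are parametrised by a variety that maps injectively into $Y$; hence $\dim X-1=\dim D\le(\dim T-s)+\dim Y=\dim X-s$. So $s=1$, which proves the first assertion; let $\rho_D\in N$ be a primitive generator of $T^\circ_{\eta_D}$. Equality must hold in the inequality, so $\dim(D/T)=\dim Y$ and therefore $D$ dominates $Y$.

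For the vanishing-order formula I would pass to the generic fibre $F=X_{\eta_Y}$. Because $D$ dominates $Y$, its generic point lies over the generic point $\eta_Y$ of $Y$, so $\CO_{X,\eta_D}$ is the local ring of $F$ at the corresponding point and $K(F)=K(X)$. Now $F$ is a normal variety of dimension $\dim T$ equipped with a faithful $T_{K(Y)}$-action --- i.e. a toric variety over $K(Y)$ --- and $D\cap F$ is a $T$-invariant prime divisor of $F$, hence a ray divisor $D_\sigma$, with $\ord_D=\ord_{D_\sigma}$. The standard toric dictionary then delivers both remaining points simultaneously: the generic isotropy of $D_\sigma$ is the subtorus spanned by the primitive ray generator $v_\sigma$, so $v_\sigma=\pm\rho_D$ and the identification with the stabilizer computed above is consistent; and the order of vanishing of a semi-invariant of weight $u$ along $D_\sigma$ equals $\pm u(v_\sigma)$, which, with the sign built into the paper's conventions (recalled in Section~\ref{sec:actions}), is $-u(\rho_D)$. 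Since every weight in $M$ is realised, this is the whole statement.

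The main obstacle, I expect, is the reduction rather than the computation: carefully justifying the dimension bound for $s$ (in particular that $D/T$ maps injectively to $Y$, hence that $D$ dominates $Y$), and then that the generic fibre of the quotient is genuinely a toric variety on which $\ord_D$ restricts to a toric ray valuation. Once this scaffolding is in place, everything that remains is routine toric bookkeeping --- indeed, in the language of polyhedral divisors $D$ is simply a \emph{horizontal} prime divisor, and the lemma is part of the usual description of invariant divisors on a $T$-variety.
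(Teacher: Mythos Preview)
Your proposal is a correct, self-contained argument; the paper itself gives no proof at all beyond a pointer to \cite[Proposition 3.2]{hausen:10a}. So in effect you have supplied what the paper outsources. The strategy you use---localise at the generic point of the rational quotient so that the $T$-action becomes a toric action over the function field $K(Y)$, and then read off both the isotropy and the valuation from the standard toric dictionary---is exactly the idea behind the cited result, so there is no genuine methodological divergence.

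Two places deserve a little more care. First, the dimension argument for $s\le 1$ is fine in spirit, but the phrase ``maps injectively into $Y$'' should be replaced by the statement that the rational map $X\dashrightarrow Y$ separates general orbits; that is all you need for the inequality $\dim(D/T)\le\dim Y$, and it is immediate from $K(Y)=K(X)^T$. Second, and more substantively, your handling of the sign is too casual. As stated, ``a primitive generator of the stabiliser'' is only defined up to $\pm 1$, so the formula $-u(\rho_D)$ is not yet well-posed. What pins down the sign is not a convention hidden in Section~\ref{sec:actions} but the geometry: the correct $\rho_D$ is the one-parameter subgroup whose limit $\lim_{t\to 0}\rho_D(t)\cdot x$ lands in $D$ for general $x\in W$, and in the toric picture this is precisely the primitive ray generator $v_\sigma$ (not $-v_\sigma$). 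Once you say this, the identification $\rho_D=v_\sigma$ is forced, and the toric formula $\ord_{D_\sigma}(\chi^w)=\langle w,v_\sigma\rangle$ together with the paper's convention $(t\cdot f)(x)=f(t^{-1}x)$ (so that a function of weight $u$ is $\chi^{-u}$ times an invariant) gives $-u(\rho_D)$ on the nose, with no residual ambiguity.
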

\begin{proof}
Both claims follow from the proof of {\cite[Proposition 3.2]{hausen:10a}}.
\end{proof}
\begin{rem}\label{rem:xray}
Given any prime $T$-invariant divisor $D$ in $X$ not meeting $W$, we can thus associate the primitive lattice vector $\rho_D\in N$ corresponding to the isotropy group of a general point of $D$.
It follows from the second claim of the above lemma (and the fact that $X$ is separated) that if $D$ and $D'$ are two distinct divisors of this type, $\rho_D\neq \rho_{D'}$. 
Thus, we can index prime $T$-invariant divisors in $X$ not meeting $W$ by a subset $\xray$ of primitive elements of $N$. We will henceforth make this identification. Given $\rho\in\xray$, we will denote the corresponding divisor by $D_\rho$.
\end{rem}

\section{The functor $\bF$}
\label{sec:gen}
As in the introduction, let $X$ be a $T$-variety, and let  $W\subset X$ be the open subset consisting of those points with finite stabilizers. We call $W$ the \emph{DM-locus} of $X$.
Consider the quotient stack $Z=[W/T]$. 
$T$-invariant prime divisors of $X$ not meeting $W$ may be indexed by a collection $\xray$ of primitive vectors in $N$, see Remark \ref{rem:xray}. The aim of the section is to construct the functor $\bF$ of Theorem~\ref{thm:stacky}. We will actually do this in a generalized setting to obtain a relative version for equivariant vector bundles on products $X \times S$, where $S$ is an arbitrary  scheme. In this situation we denote the quotient map of the product $W \times S \to Z \times S$ by $\pi$.
The case of primary interest for us will remain that where $S=\Spec K$.
Note that $Z\times S$ is still an Artin stack, so we may talk about $(Z \times S,\xray)$-filtrations.

\begin{theorem}\label{thm:F}
There is a fully faithful functor $\bF$ which embeds the category of $T$-equivariant vector bundles on $X \times S$ into the category of $(Z \times S,\xray)$-filtrations. 
\end{theorem}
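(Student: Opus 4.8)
The plan is to construct $\bF$ locally, using the decomposition of equivariant vector bundles afforded by Proposition~\ref{prop:cover}, and then glue. Let $\V$ be a $T$-equivariant vector bundle on $X\times S$. First I would define the underlying vector bundle on $Z\times S$: push forward the restriction $\V|_{W\times S}$ along $\pi$ and take the weight-zero (i.e. $T$-invariant) part, setting $\E := \bigl(\pi_*(\V|_{W\times S})\bigr)_0$. To see that $\E$ is actually a vector bundle, one works locally on an invariant affine cover $\{U_\alpha\}$ on which $\V$ restricts to a sum of trivial equivariant line bundles $\bigoplus_j \CO_{U_\alpha}(\chi_{j})$ (in the weight notation, each summand semi-invariant of some weight); Lemma~\ref{lemma:stacky} then identifies $\E$ over $\pi(U_\alpha\cap(W\times S))$ with a sum of the invertible sheaves $\pi_*(\CO)_{-\chi_j}$ — in particular locally free of the correct rank — and part~(1) of the same lemma gives $\pi^*\E\cong \V|_{W\times S}$ canonically, which is what makes the construction functorial and faithful.

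Next I would define the filtrations indexed by $\rho\in\xray$. Fix $\rho$; this corresponds to a prime $T$-invariant divisor $D_\rho\subset X$ not meeting $W$, with primitive isotropy cocharacter $\rho\in N$. The idea — exactly as in Klyachko's toric picture — is that the filtration step $\E^\rho(i)$ should consist of those sections of $\E$ (equivalently, $T$-invariant sections of $\pi_*\V$) that, viewed as rational sections of $\V$ near the generic point of $D_\rho$, have "order of vanishing along $D_\rho$ at least $i$." Concretely, using Lemma~\ref{lemma:tdiv}: a semi-invariant rational section of weight $u$ contributes to $\E$ after twisting, and it extends over $D_\rho$ to the appropriate order precisely in terms of the pairing $u(\rho)$. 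So I would set $\E^\rho(i)$ to be the subsheaf of $\E$ generated, locally on the cover $\{U_\alpha\}$ where $\V=\bigoplus_j\CO(\chi_j)$ is split, by those line-bundle summands whose weight $\chi_j$ satisfies $\chi_j(\rho)\le -i$ (or the sign convention dictated by Lemma~\ref{lemma:tdiv}). One checks this is independent of the chosen decomposition — because any two decompositions differ by an automorphism preserving the weight grading at the generic point of the orbit, hence preserving these sums — so the definition glues to a global sub-bundle, and the quotients $\E^\rho(i)/\E^\rho(i+1)$ are locally free, being locally sums of the summands with $\chi_j(\rho)=-i-1$. Fullness is automatic since only finitely many weights $\chi_j$ occur over a quasicompact piece. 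A morphism $\phi:\V\to\V'$ of equivariant bundles restricts to $W\times S$, is $T$-equivariant, hence descends to $\bF\phi:\E\to\E'$ on weight-zero pushforwards; equivariance forces $\bF\phi$ to respect the weight decompositions locally, hence the filtrations, so $\bF$ is a well-defined functor into $(Z\times S,\xray)$-filtrations.

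For full faithfulness: given equivariant $\V,\V'$ and a morphism $\psi:\bF\V\to\bF\V'$ of filtered bundles, pull back along $\pi$ to get $\pi^*\psi:\V|_{W\times S}\to\V'|_{W\times S}$, which is automatically $T$-equivariant by construction (it's a pullback from $Z\times S$), and it is the unique equivariant extension of $\psi$. The remaining point is that $\pi^*\psi$ extends uniquely to an equivariant morphism $\V\to\V'$ over all of $X\times S$: since $X$ is normal and $W$ has complement of codimension $\ge 1$ — in fact the complement is a union of divisors $D_\rho$ and possibly higher-codimension loci — a morphism of vector bundles defined on $W\times S$ need not extend a priori, but the filtration-compatibility of $\psi$ is exactly the condition that controls the poles along each $D_\rho$: compatibility with the $\rho$-filtration says $\pi^*\psi$ carries the summands of weight $\chi_j$ into sums of summands of weight $\chi_k'$ with $\chi_k'(\rho)\le\chi_j(\rho)$, which by Lemma~\ref{lemma:tdiv} means $\pi^*\psi$ has no pole along $D_\rho$, hence extends over the generic point of each $D_\rho$, and then over all of $X\times S$ by normality (Hartogs). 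Conversely any equivariant morphism restricts to a filtration-compatible one, so $\Hom$ sets match bijectively. This last extension-across-divisors argument is the main obstacle: one must set up the sign conventions in Lemma~\ref{lemma:tdiv} carefully and verify that filtration-compatibility translates precisely into pole-freedom along every $D_\rho$ simultaneously, and that codimension-$\ge 2$ parts of $X\setminus W$ cause no trouble thanks to normality. Everything else — local freeness of $\E$ and of the graded pieces, independence of the local decomposition, functoriality — is a routine consequence of Proposition~\ref{prop:cover} and Lemma~\ref{lemma:stacky}.
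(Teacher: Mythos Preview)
Your proposal is essentially correct and follows the same overall architecture as the paper: define $\E=(\pi_*\V|_{W\times S})_0$, verify $\pi^*\E\cong\V|_{W\times S}$ via Lemma~\ref{lemma:stacky}, deduce faithfulness from density of $W$, and prove fullness by pulling back a filtered map and extending it first across each $D_\rho$ (using that the filtrations control poles) and then across the codimension-$\geq 2$ locus by normality. Your fullness argument matches the paper's almost verbatim.

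The one substantive difference is in how the filtration $\E^\rho(i)$ is \emph{defined}. You build it locally from weight decompositions on the cover $\{U_\alpha\}$ and then have to argue that the resulting sub-bundle is independent of the decomposition and glues. The paper instead gives a global intrinsic definition: $\V^\rho\subset\V|_{W\times S}$ is the subsheaf of sections extending across $D_\rho$, and $\E^\rho(i)$ is obtained from $\pi_*(\V^\rho)_u$ for any $u$ with $u(\rho)=i$ (Lemmas~\ref{lemma:stackfil1} and~\ref{lemma:stackfil2} then establish local freeness and independence of $u$ by exactly the rank-one weight computation you sketch). The two definitions agree, but your gluing justification (``any two decompositions differ by an automorphism preserving the weight grading at the generic point of the orbit'') is not quite sharp: the relevant constraint comes from the generic point of $D_\rho$, not an arbitrary orbit, and on charts $U_\alpha$ that do not meet $D_\rho$ the pairings $\chi_j(\rho)$ are not well-defined at all (units of nonzero weight exist there), so your local recipe has no meaning on those charts. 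The paper's intrinsic definition sidesteps this entirely, since the condition ``order $\geq i$ along $D_\rho$'' makes sense for any rational section regardless of where it is defined. Also note your sign convention: the correct condition is $\chi_j(\rho)\geq i$, not $\leq -i$ (cf.\ Remark~\ref{sec:rem-rank-one-filt}).
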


Let $\V$ be an equivariant vector bundle on $X\times S$. 
Set $\E=\pi_*(\V|_{{W\times S}})_0$.
Now, for any $\rho\in \xray$, let 
$\V^\rho$ denote the subsheaf of $\V|_{W\times S}$ consisting of those sections $s$ which extend to some open subset $U\subset X\times S$ intersecting $D_\rho\times S$.
\begin{lemma}\label{lemma:stackfil1}
For any $u\in M$, the sheaf $\pi_*(\V^\rho)_u$ is locally free.
\end{lemma}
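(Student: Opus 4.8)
The plan is to identify $\pi_*(\V^\rho)_u$, locally on $Z\times S$, with a finite direct sum of pushforwards of equivariant line bundles of the type appearing in Lemma~\ref{lemma:stacky}, reducing to line bundles by means of Proposition~\ref{prop:cover} and pinning down orders of vanishing along $D_\rho$ by means of Lemma~\ref{lemma:tdiv}. I would begin by reformulating $\V^\rho$. For the exposition assume $S$ integral, so $X\times S$ is integral; let $\eta$ be the generic point of $D_\rho\times S$, a codimension-one point at which $\CO_{X\times S,\eta}$ is a DVR (the general case follows by treating the components of $D_\rho\times S$ one at a time). Since the locus where a rational section of $\V$ is regular is open, a section of $\V$ over an open $V\subseteq W\times S$ extends to an open of $X\times S$ meeting $D_\rho\times S$ if and only if it is regular at $\eta$. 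Hence $\V^\rho$ is the subsheaf of $\V|_{W\times S}$ of rational sections regular at $\eta$; it is $T$-equivariant, and since by Lemma~\ref{lemma:tdiv} (case $u=0$) every nonzero $T$-invariant rational function has order $0$ at $\eta$, it is a module over the sheaf of $T$-invariant functions, so that $\pi_*(\V^\rho)_u$ is a genuine $\CO_{Z\times S}$-module.

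Now the assertion is local on $Z\times S$. As $\eta$ is the generic point of the orbit $T\cdot\eta$, Proposition~\ref{prop:cover} supplies a $T$-invariant open $U\ni\eta$ with $\V|_U\cong\bigoplus_i\CO\cdot\epsilon_i$, where the $\epsilon_i$ are semi-invariant of weights $w_i\in M$; thus $\V_\eta=\bigoplus_i\CO_{X\times S,\eta}\cdot\epsilon_i$ is free over a DVR with semi-invariant basis. A weight-$u$ section $s=\sum_i f_i\epsilon_i$ of $\V|_{W\times S}$ has $f_i$ of weight $u-w_i$, so $\ord_\eta f_i=(w_i-u)(\rho)$ whenever $f_i\neq 0$ by Lemma~\ref{lemma:tdiv}; since the $\epsilon_i$ form a basis of the free $\CO_{X\times S,\eta}$-module $\V_\eta$, the section $s$ is regular at $\eta$ if and only if each $f_i$ is, i.e. if and only if $f_i=0$ for every $i$ with $w_i(\rho)<u(\rho)$. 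Consequently, on $U\cap(W\times S)$ one has $(\V^\rho)_u=\bigoplus_{i:\,w_i(\rho)\geq u(\rho)}\bigl(\CO\cdot\epsilon_i|_{W\times S}\bigr)_u$, and applying $\pi_*$ shows that the restriction of $\pi_*(\V^\rho)_u$ to the open $\pi\bigl(U\cap(W\times S)\bigr)$ is a direct sum of the invertible sheaves of Lemma~\ref{lemma:stacky}(2), hence locally free of rank $\#\{i:w_i(\rho)\geq u(\rho)\}$ there.

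The main obstacle is that the open $U$ need not contain a prescribed point $x\in W\times S$ — the decomposition of $\V$ near $\eta$ can break down along divisors of $X$ that meet $W$ — so one must still establish local freeness near such $x$. For this I would pick a second invariant affine $U'\ni x$ with $\V|_{U'}\cong\bigoplus_j\CO\cdot e_j$, $e_j$ semi-invariant of weight $v_j$, together with the semi-invariant change of frame $e_j=\sum_i h_{ij}\epsilon_i$ on $U\cap U'$ (so $h_{ij}$ is semi-invariant of weight $v_j-w_i$ and $(h_{ij})$ is invertible over $\CO(U\cap U')$), and rerun the computation at $\eta$ in the $\epsilon$-frame: a weight-$u$ section $\sum_j f_j e_j=\sum_i\bigl(\sum_j h_{ij}f_j\bigr)\epsilon_i$ of $\V|_{W\times S}$ lies in $\V^\rho$ precisely when $\sum_j h_{ij}f_j=0$ for every $i$ with $w_i(\rho)<u(\rho)$. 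Invertibility of $(h_{ij})$ together with semi-invariance of the $h_{ij}$ then exhibits $\pi_*(\V^\rho)_u$ near $\pi(x)$ as the kernel of a morphism of locally free sheaves with locally free cokernel, whence it is locally free. The genuinely delicate step — where I expect most of the work to lie — is to carry out this comparison coherently on all overlaps, i.e. to check that these local descriptions of $\pi_*(\V^\rho)_u$ glue to a locally free sheaf; everything else is weight bookkeeping built on Lemma~\ref{lemma:tdiv}.
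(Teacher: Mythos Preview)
Your route is the paper's route: split $\V$ into semi-invariant line summands via Proposition~\ref{prop:cover}, compute orders along $D_\rho\times S$ by Lemma~\ref{lemma:tdiv}, and conclude with Lemma~\ref{lemma:stacky}. The paper does this in one stroke: it passes to a single invariant affine $Y\subset X\times S$ containing the generic point of $D_\rho\times S$ on which $\V$ splits, reduces to the rank-one case, and observes that for a line bundle the weight-$u$ part of $\V^\rho$ is all-or-nothing (either $0$ or all of $\V_u$), so $\pi_*(\V^\rho)_u$ is $0$ or $\pi_*(\V)_u$ on $\pi(Y\cap(W\times S))$. Your first two paragraphs reproduce exactly this.

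Where you depart from the paper is in worrying about points $x\in W\times S$ not contained in any such $Y$. The paper simply does not address this; it tacitly treats the computation on $\pi(Y\cap(W\times S))$ as settling the matter. Your caution is not misplaced --- there are $T$-varieties (e.g.\ $\AA^2\setminus\{0\}$ with the scaling action on one coordinate) in which some orbits in $W$ lie in no invariant affine open meeting $D_\rho$ --- so the concern is real, even if the paper elides it.

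That said, your proposed fix has a genuine gap. The change-of-basis entries $h_{ij}$ are only regular on $U\cap U'$; away from that overlap they are merely rational, so the map $(g_j)\mapsto\bigl(\sum_j h_{ij}g_j\bigr)_{i}$ is not a morphism of $\CO$-modules on $U'\cap(W\times S)$, and your ``kernel of a morphism of locally free sheaves with locally free cokernel'' description is not available as written. The invertibility of $(h_{ij})$ you invoke holds only over $\CO(U\cap U')$ or over the function field, not over $\CO(U')$. So the step you flag as ``genuinely delicate'' (gluing on overlaps) is in fact preceded by a more basic difficulty: making the relevant map well-defined at all near $\pi(x)$. As it stands, the proposal establishes local freeness exactly where the paper does --- on $\pi\bigl(U\cap(W\times S)\bigr)$ --- and no further.
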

\begin{proof}
The claims are local on $Z\times S$, so by Proposition \ref{prop:cover} we may assume that we are working on an affine subscheme $Y$ of $X\times S$  containing  a general point of $D_\rho$ on which $\V$ decomposes as a direct sum of equivariant rank one free sheaves. We may thus reduce to the case that $\V$ is isomorphic to $\CO_X$, generated by a semi-invariant section $s$ of weight $v\in M$. 
All sheaves we henceforth consider will be sheaves on $Y\cap(W\times S)$ or its image under $\pi$.

We now show that in this situation, $\pi_*(\V^\rho)_u$ is either $0$ or $\pi_*(\V)_u$.  Let $s'\in\V$ be a semi-invariant section of weight $u$. It may be written as $s'=s\cdot f$ for some semi-invariant function $f\in\CO_W$ of weight $u-v$. Then the order of vanishing of $s'$ at the generic point of $D_\rho\times S$ is given by the integer $(v-u)(\rho)$. Indeed, the valuation defined by the divisor $D_\rho\times S$ is just as in  Lemma \ref{lemma:tdiv}.
Hence, the degree $u$ part of $\V^\rho$ either vanishes, or equals that of $\V$.
In the first case, $\pi_*(\V^\rho)_u$ is $0$; in the second,  $\pi_*(\V^\rho)_u=\pi_*(\V)_u$.
The lemma now follows directly from Lemma \ref{lemma:stacky}.
\end{proof} 
Now for any $u\in M$, there is a natural inclusion 
$$\E^\rho(u):=\pi_*\big(\V^\rho\big)_u\otimes \pi_*\big(\CO_{W\times S}\big)_{-u}\hookrightarrow\E$$
obtained by multiplication of sections. 
For any $i\in\ZZ$, consider some $u\in M$ such that $u(\rho)=i$. Let $\E^\rho(i)$ be the image of 
$\E^\rho(u)$
in $\E$.

\begin{lemma}\label{lemma:stackfil2}
The vector bundle $\E^\rho(i)$ doesn't depend on the choice of $u$. For any $i$, $\E^\rho(i+1)$ is a sub-bundle of $\E^\rho(i)$. 
\end{lemma}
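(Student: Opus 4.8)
The plan is to prove both statements locally on $Z\times S$, by reducing to the situation treated in the proof of Lemma \ref{lemma:stackfil1}. Being locally free, being a sub-bundle, and the formation of the image of a morphism of sheaves are all local on the base, and the functors $\pi_*$, taking the weight-$u$ part, tensoring with $\pi_*(\CO_{W\times S})_{-u}$, and taking images all commute with restriction to open substacks and with finite direct sums. So, by Proposition \ref{prop:cover} and the reduction carried out in the proof of Lemma \ref{lemma:stackfil1}, I may assume there is an invariant affine open $Y\subseteq X\times S$ meeting $D_\rho\times S$ on which $\V$ decomposes as $\bigoplus_{j=1}^{k}\mcL_j$, where $\mcL_j=\CO_Y\cdot s_j$ is an equivariant line bundle with $s_j$ semi-invariant of weight $v_j\in M$. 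Writing $\E_j=\pi_*(\mcL_j|_{W\times S})_0$, so that $\E|_Y=\bigoplus_j\E_j$ over the image of $Y$, the map $\E^\rho(u)\hookrightarrow\E$ restricts there to the direct sum of the analogous maps $\E_j^\rho(u)\to\E_j$ attached to the line bundles $\mcL_j$, and it suffices to analyse these.

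Next I would identify each $\E_j^\rho(u)\to\E_j$. A weight-$u$ section of $\mcL_j$ has the form $s_j\cdot f$ with $f$ semi-invariant of weight $u-v_j$, and by Lemma \ref{lemma:tdiv} it vanishes to order $(v_j-u)(\rho)$ along $D_\rho\times S$. Exactly as in the proof of Lemma \ref{lemma:stackfil1}, this shows that the weight-$u$ part of $\mcL_j^\rho$ equals that of $\mcL_j|_{W\times S}$ when $u(\rho)\le v_j(\rho)$ and vanishes otherwise, so that $\pi_*(\mcL_j^\rho)_u$ is either $\pi_*(\mcL_j|_{W\times S})_u$ or $0$. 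In the first case $\E_j^\rho(u)=\pi_*(\mcL_j|_{W\times S})_u\otimes\pi_*(\CO_{W\times S})_{-u}$ and its multiplication-of-sections map to $\E_j=\pi_*(\mcL_j|_{W\times S})_0$ is the canonical isomorphism of Lemma \ref{lemma:stacky}(3); in the second case $\E_j^\rho(u)=0$. Hence the image of $\E_j^\rho(u)$ in $\E_j$ equals $\E_j$ when $u(\rho)\le v_j(\rho)$ and equals $0$ otherwise --- in particular it depends only on the integer $i=u(\rho)$.

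Summing over $j$, I would conclude that over the image of $Y$
\[
\E^\rho(i)=\bigoplus_{j\,:\,v_j(\rho)\ge i}\E_j ,
\]
which depends only on $i$ and not on the choice of $u$ with $u(\rho)=i$; this gives the first assertion. For the second, $\E^\rho(i+1)=\bigoplus_{j\,:\,v_j(\rho)\ge i+1}\E_j$ is contained in $\E^\rho(i)$ with quotient $\bigoplus_{j\,:\,v_j(\rho)=i}\E_j$, and, since each $\E_j$ is invertible by Lemma \ref{lemma:stacky}(2), both $\E^\rho(i)$ and this quotient are direct summands of $\E|_Y$ over the image of $Y$, hence locally free. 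Thus $\E^\rho(i+1)$ is a sub-bundle of $\E^\rho(i)$.

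I expect the only delicate point to be the identification, in the case $\pi_*(\mcL_j^\rho)_u=\pi_*(\mcL_j|_{W\times S})_u$, of the multiplication map $\pi_*(\mcL_j|_{W\times S})_u\otimes\pi_*(\CO_{W\times S})_{-u}\to\E_j$ with the canonical isomorphism of Lemma \ref{lemma:stacky}(3); this can be checked after pulling back to a cover on which $W\times S\to Z\times S$ is a $T$-principal bundle, where it is precisely the definition of that isomorphism. Everything else is routine bookkeeping about the compatibility of $\pi_*$, weight-space decompositions, tensor products and images with restriction to open substacks and with direct sums, together with the rank-one reduction already performed in the proof of Lemma \ref{lemma:stackfil1}.
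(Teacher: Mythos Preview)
Your proof is correct and follows essentially the same approach as the paper: both arguments reduce locally to the rank-one case via Proposition~\ref{prop:cover}, invoke the valuation computation from the proof of Lemma~\ref{lemma:stackfil1} to see that only $u(\rho)$ matters, and appeal to the canonical isomorphism of Lemma~\ref{lemma:stacky}. The paper's version is simply more compressed---it cites the canonical isomorphism $\pi_*(\V|_{W\times S})_u\otimes\pi_*(\CO_{W\times S})_{-u}\cong\pi_*(\V|_{W\times S})_{u'}\otimes\pi_*(\CO_{W\times S})_{-u'}$ directly and notes that under it the subsheaves $\pi_*(\V^\rho)_u$ and $\pi_*(\V^\rho)_{u'}$ correspond---whereas you make the local splitting $\E^\rho(i)=\bigoplus_{v_j(\rho)\ge i}\E_j$ explicit, which has the modest advantage of rendering the sub-bundle claim immediate.
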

\begin{proof}
From the proof of \ref{lemma:stackfil1}, we have seen that the vanishing of $\E^\rho(u)$ only depends on $u(\rho)$, not on $u$. The first claim follows from the canonical isomorphism
$$\pi_*(\V|_{W\times S})_u\otimes\pi_*(\CO_{W\times S})_{-u}\cong \pi_*(\V|_{W\times S})_{u'}\otimes\pi_*(\CO_{W\times S})_{-u'}$$
for any $u,u'\in M$, see Lemma \ref{lemma:stacky}. The second claim follows from this isomorphism, the reduction used in the proof of Lemma \ref{lemma:stackfil1}, and the fact that if $\E^\rho(u)$ does not vanish, neither will $\E^\rho(u')$ for any $u'$ with $(u-u')(\rho)\geq 0$. 
\end{proof}

The functor $\bF$ associates to $\V$ the $(Z \times S,\xray)$-filtration $(\E,\{\E^\rho(i)\})$. Morphisms of equivariant vector bundles map to morphisms of $(Z \times S,\xray)$-filtrations in an obvious manner.
\begin{rem}
\label{sec:rem-alternativ-V-rho}
One may equivalently define $\E^\rho(i)$ as $\pi_*(\V^\rho(i))_0$, where $\V^\rho(i)$ is the subsheaf of $\V|_{W\times S}$ of sections which vanish to order at least $i$ along $D_\rho$. 
\end{rem}

\begin{rem}
\label{sec:rem-rank-one-filt}
  If $\V$ has rank one, then the filtration $\E^\rho(i)$ is determined by the unique integer $j$  such that $\E^\rho(j)=\E$ and $\E^\rho(j+1)=0$. If $s$ is a local generator of $\V$ in a neighbourhood of $D_\rho\times S$ of weight $v$, then this jumping position is given by $j=-v(\rho)$. Indeed, we are looking for the products $f \cdot s$ of weight $0$, i.e. $\deg(f) = -v$. However,  the vanishing order of $f \cdot s$ is given by $-v(f)$  due to Lemma \ref{lemma:tdiv}.
\end{rem}

\begin{proof}[Proof of Theorem~\ref{thm:F}]
	By Lemmas \ref{lemma:stackfil1} and \ref{lemma:stackfil2}, the functor $\bF$ is well-defined. We now show that it is fully faithful.
The fundamental observation we will use is that if $\V$ is an equivariant vector bundle and  $(\E,\{\E^\rho(i)\})$ the corresponding $(Z\times S,\xray)$-filtration, then $\V|_{W\times S}=\pi^*(\E)$ by Lemma \ref{lemma:stacky}.

Now, let $\V$, $\V'$ be equivariant vector bundles and consider equivariant maps $\phi_1,\phi_2:\V\to\V'$. If the corresponding maps $\E\to\E'$ are equal, then so are the induced maps
from $\V|_{W\times S}=\pi^*(\E)$ to $\V'|_{W\times S}=\pi^*(\E')$ obtained by pullback. But these are the same maps as those induced by $\phi_1$ and $\phi_2$. Hence, $\phi_1$ and $\phi_2$ are equal on $\V|_{W\times S}$. But then they must agree on all of $X\times S$, since $\V$ is a vector bundle and $W$ is dense in $X$. This shows that $\bF$ is faithful.

We now show that $\bF$ is full. Indeed, let $\V$, $\V'$ be equivariant vector bundles with corresponding $(Z\times S,\xray)$-filtrations $(\E,\{\E^\rho(i)\})$ and $(\E',\{{\E}'^\rho(i)\})$. Let $\psi:\E\to\E'$ be a map of $\CO_{Z\times S}$-modules inducing a map of filtrations. We claim that $\psi$ corresponds to a map $\V\to\V'$.
Now, the pullback of $\psi$ along $\pi$ gives an equivariant map 
$\pi^*(\psi):\V|_{W\times S}\to\V'|_{W\times S}$. We will show that this extends to all of $X\times S$. 
Without loss of generality, we may assume that $S$ is affine, say $S=\Spec A$.

Let $U_\rho=\Spec B$ be an open affine subset of $X$ containing only points of $W$ and $D_\rho$, and (after possibly shrinking $U_\rho)$ assume that $D_\rho$ is cut out by $f\in B$.
The sheaf $\V'$ restricted to $U_\rho\times S$ corresponds to an $B\otimes A$-module $M$.
Hence, the restriction to $(U_\rho\cap W)\times S$ corresponds to the $B_f\otimes A$-module $M_f$. Consider a section $s$ of $\V$ on $U_\rho\times S$; then $\pi^*(\psi)(s)\in M_f$. But since the filtrations $\E^\rho(i)$ and ${\E'}^\rho(i)$ control the order of vanishing along $D_\rho$ and $\psi$ respects them, $s$ must actually be in $M$. Hence, $\pi^*(\psi)$ extends to any such set $U_\rho\times S$.

Now, we can conclude that $\pi^*(\psi)$ extends to $X'\times S$, where $X\setminus X'$ has codimension greater than one in $X$. However, since $X$ is normal and $\V$, $\V'$ are locally free, it follows that $\pi^*(\psi)$ extends to a map defined on all of $X\times S$.
\end{proof}
 
\begin{rem}
 Suppose that $X$ is an affine $T$-variety of complexity $k$ and assume that $\Char K=0$. In \cite{altmann:06a}, Altmann and Hausen show that $X$ can be described in terms of a \emph{polyhedral divisor} $\D$ on a $k$-dimensional normal variety $Y$. Instead of allowing just integral or rational coefficients, the coefficients of $\D$ are polyhedra all having some common recession cone. Many aspects of the geometry of $X$, including its $T$-orbit structure, can be read off the combinatorics of the coefficients of $\D$.

 This construction is related to our functor $\bF$. The variety $X$ is affine, so we can recover it from the global sections of the $T$-equivariant bundle $\CO_X$. 
 Since $W\to Z=[W/T]$ is a $T$-principal bundle, we have 
 \begin{equation*}
 	\pi_*(\CO_W)=\bigoplus_{u\in M} \mcL^u\chi^u \qquad\qquad \mcL^u=\CO\left(\sum_v u(v)D_v\right)
 \end{equation*}
 where the sum on the right is over some finite set of lattice points $v\in N$, and each $D_v$ is a Cartier divisor on $Z$. Hence, $W$ corresponds to a divisor on $Z$ whose coefficients are lattice points of $N$. To recover $\CO_X$ (more precisely its global sections, see  Proposition \ref{prop:gs}), we only need the additional information of the set $\xray$. In this affine situation, the elements of $\xray$ will all be rays of the positive hull of $\xray$.
 
 We thus see that the complete information of $X$ is encoded by a divisor $\D'$ on the (possibly non-separated) Deligne-Mumford stack $Z$ whose coefficients are lattice translates of a rational polyhedral cone $\sigma$.
On the other hand, the polyhedral divisor $\D$ is a divisor on the variety $Y$, but its coefficients $\D_P$ may have numerous vertices, and need not be lattice polyhedra. However, the recession cone of each coefficient $\D_P$ is $\sigma$. 
Furthermore, suppose the map $\D:M\to \WDiv_\QQ Y$ 
defined by 
$$
\D(u)=\sum \min_{v\in\D_P} u(v) P
$$
is linear and $\D(u)$ is Cartier for all $u$.
Then the coefficients of $\D$ are also just lattice translates of $\sigma$,  
$Z$ is a variety, and $\D'$ is a polyhedral divisor describing $X$.
\end{rem}

\section{Compatibility Conditions}\label{sec:compatible}
In this section, we describe the image of the functor $\bF$ from Theorem \ref{thm:F} under certain assumptions.
As before, let $X$ be a $T$-variety with DM-locus $W$, and $Z=[W/T]$ the quotient stack with quotient map $W\to Z$. Let $\xray$ be the subset of $N$ corresponding to the invariant prime divisors not meeting $W$.

For any $T$-orbit $\co\subset X$, let $Z_\co$ be the image in $Z$ of those orbits $\co'$ in $W$ whose closures in $X$ contain $\co$.
Let $\xray_\co$ consist of those $\rho\in\xray$ such that $D_\rho$ contains $\co$, and let $M_\co$ denote the lattice of integral linear forms on the span of $\xray_\co\subset N$.

Let $S$ be any scheme. As before, we will also denote the quotient map $W\times S\to Z\times S$ by $\pi$.
\begin{definition}\label{def:compatible}
	A $(Z\times S,\xray)$-filtration $(\E,\{\E^\rho(i)\})$ is \emph{$X$-compatible} if for every 
	$T$-orbit $\co\subset X$ and every closed point $s\in S$
there exists an open set $U\subset Z\times S$ containing $Z_{\co}\times s$ and a decomposition
\begin{equation*}\E|_{U}=\bigoplus_{[u]\in M_\co} \E_{[u]}
\end{equation*}
such  that each $\E_{[u]}$ is a sum of invertible $\CO_U$-modules, and
for every $\rho\in\xray_\co$,
\begin{equation*}\E|_{U}^\rho(i)=\bigoplus_{\substack{
u(\rho)\geq i}} \E_{[u]}.
\end{equation*}
\end{definition}
In order to completely describe the image of $\bF$, we will need to impose a slightly technical condition on $X\times S$:
\begin{equation*}\label{dagger}
	\begin{array}{ p{13cm}}
					\textrm{For any open, affine, invariant subscheme $Y\subset X\times S$ and any equivariant line bundle $\mcL$ on $Y\cap W$, there exists an equivariant line bundle $\overline \mcL$ on $Y$ restricting to $\mcL$ which is locally generated by semi-invariant sections whose weights are orthogonal to all $\rho\in \xray$ with $D_\rho\cap Y\neq \emptyset$.
					}\\
\end{array}
\tag{$\dagger$}\end{equation*}
This condition is satisfied if $X$ is factorial and $S$ is a point. Indeed, we may identify line bundles on $X$ with Cartier divisors, and to extend a divisor to $Y$ we just take its closure. This condition is also satisfied whenever $X$ is a toric variety and $T$ is a maximal torus of $\aut(X)$. In this case, the quotient map $Y\cap W\to[Y\cap W/T]$ extends to a map $Y\to[Y\cap W/T]$, and $\overline \mcL$ may be constructed as the pullback of the invariant pushforward of $\mcL$.
\begin{theorem}\label{thm:compatible}
Let $X$ be a $T$-variety and $S$ a scheme.
\begin{enumerate}
  \item  For any equivariant bundle $\V$ on $X\times S$, $\bF(\V)$ is $X$-compatible. \label{thm:1}
  \item If $X$ satisfies \eqref{dagger}, then $\bF$ induces an equivalence of categories between
    $T$-equivariant vector bundles on $X\times S$ and $X$-compatible $(Z\times S,\xray)$-filtrations.\label{thm:2}
\end{enumerate}
\end{theorem}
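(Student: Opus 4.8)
\emph{Claim \eqref{thm:1}.} The assertion is local on $Z\times S$, so fix a $T$-orbit $\co\subset X$ and a closed point $s\in S$, and let $\xi$ be the generic point of $\co\times\{s\}$. By Proposition \ref{prop:cover} there is an invariant affine open $Y\subset X\times S$ with $\xi\in Y$ on which $\V$ splits as $\bigoplus_j\CO_Y\cdot s_j$, with $s_j$ semi-invariant of weight $v_j\in M$. Being invariant and containing $\xi$, the set $Y$ contains $\co'\times\{s\}$ for every orbit $\co'\subset W$ whose closure in $X$ contains $\co$; hence $U:=\pi\bigl(Y\cap(W\times S)\bigr)$ is an open substack of $Z\times S$ containing $Z_\co\times s$, with $\pi^{-1}(U)=Y\cap(W\times S)$. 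Pushing the splitting forward and extracting invariants, Lemma \ref{lemma:stacky} gives $\E|_U=\bigoplus_j\pi_*(\CO_{W\times S})_{-v_j}\cdot s_j$, a direct sum of line bundles; collecting the summands according to their weight class in $M_\co$ yields a decomposition $\E|_U=\bigoplus_{[u]}\E_{[u]}$ as required, and the rank-one description of Remark \ref{sec:rem-rank-one-filt}, applied to each $\CO_Y\cdot s_j$, identifies $\E^\rho(i)|_U$ with the corresponding sub-sum $\bigoplus_{u(\rho)\ge i}\E_{[u]}$ (after fixing the sign convention on $M_\co$). Thus $\bF(\V)$ is $X$-compatible.

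\emph{Claim \eqref{thm:2}.} By Theorem \ref{thm:F}, $\bF$ is fully faithful, so it remains to prove essential surjectivity onto the $X$-compatible filtrations; one first reduces to $S$ affine. Given an $X$-compatible $(Z\times S,\xray)$-filtration $(\E,\{\E^\rho(i)\})$, the plan is to construct the sought-for equivariant bundle $\V$ on the members of a suitable invariant affine cover $\{Y_\alpha\}$ of $X\times S$ and then glue. On a member $Y$ contained in $W\times S$ the construction is forced: $\V|_Y:=\pi^*\E$, which carries a canonical equivariant structure, being pulled back from the quotient stack. On a member $Y=\Spec B$ meeting some of the $D_\rho$, we shrink $Y$ so that (i) every $D_\rho$ meeting $Y$ contains a fixed orbit $\co_Y\subset X$ and is cut out there by a semi-invariant $f_\rho\in B$, and (ii) the compatibility decomposition attached to $(\co_Y,s)$ holds on all of $\pi\bigl(Y\cap(W\times S)\bigr)$; then $\xray_{\co_Y}\supseteq\{\rho:D_\rho\cap Y\neq\emptyset\}$, so $u(\rho)$ is defined for every $[u]\in M_{\co_Y}$ and every relevant $\rho$.

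On such a $Y$ we have $\pi^*\E|_{Y\cap(W\times S)}=\bigoplus_{[u]\in M_{\co_Y}}\pi^*\E_{[u]}$, where each $\pi^*\E_{[u]}$ restricts to a direct sum of equivariant line bundles of the form $\mcL=\pi^*L$. By \eqref{dagger} each $\mcL$ extends to an equivariant line bundle $\overline\mcL$ on $Y$ whose local generators have weights orthogonal to all $\rho$ with $D_\rho\cap Y\neq\emptyset$, and hence, by Remark \ref{sec:rem-rank-one-filt}, with jump $0$ along every $D_\rho$. One then sets
\begin{equation*}
\V_\mcL:=\overline\mcL\otimes\CO_Y\Bigl(-\sum_\rho u(\rho)\,D_\rho\Bigr),
\end{equation*}
the tensor product with the invertible fractional ideal $\bigl(\prod_\rho f_\rho^{u(\rho)}\bigr)$ of $\CO_Y$. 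Since every $D_\rho$ is disjoint from $W$, this fractional ideal restricts on $Y\cap(W\times S)$ to the structure sheaf \emph{equivariantly} — it is there a subsheaf of $\CO_Y$, not a weight-twisted copy — so $\V_\mcL|_{Y\cap(W\times S)}=\mcL$; and a direct computation with Lemma \ref{lemma:tdiv} and Remark \ref{sec:rem-rank-one-filt} shows that $\V_\mcL$ has jump $u(\rho)$ along each $D_\rho$. Taking $\V|_Y:=\bigoplus_{[u],\,\mcL}\V_\mcL$, computing $\bF$ summand by summand (using that $\bF$ commutes with direct sums and Lemma \ref{lemma:stacky}), and comparing with $\E^\rho(i)=\bigoplus_{u(\rho)\ge i}\E_{[u]}$, one obtains $\bF(\V|_Y)\cong(\E,\{\E^\rho(i)\})|_{\pi(Y\cap(W\times S))}$.

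It remains to glue. On overlaps $Y_\alpha\cap Y_\beta$ — covered by invariant affines to which the same construction applies — the bundles $\V|_{Y_\alpha}$ and $\V|_{Y_\beta}$ have isomorphic images under $\bF$, namely $(\E,\{\E^\rho(i)\})$ restricted there; since the proof of Theorem \ref{thm:F} is local, $\bF$ is fully faithful on invariant open subsets of $X\times S$ as well, and this produces canonical gluing isomorphisms, satisfying the cocycle condition by faithfulness. They glue the $\V|_{Y_\alpha}$ into an equivariant vector bundle $\V$ on $X\times S$ with $\bF(\V)\cong(\E,\{\E^\rho(i)\})$, which finishes the proof. The point I expect to be most delicate is condition (ii): arranging, by shrinking, that a single compatibility decomposition is valid on the whole stacky quotient $\pi\bigl(Y\cap(W\times S)\bigr)$ while $Y$ still meets the relevant $D_\rho$. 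This comes down to checking that no $D_\rho$ is contained in the closure of the forbidden locus $\pi^{-1}\bigl((Z\times S)\setminus U\bigr)$ — a dimension count resting on $D_\rho\cap W=\emptyset$ together with the observation that orbits in the forbidden locus cannot degenerate to $D_\rho$ — along with the bookkeeping required to handle all closed points of $S$ simultaneously and, when $X$ is not factorial, to see that the fractional ideals above are genuinely invertible on the chosen $Y$.
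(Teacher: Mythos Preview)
Your argument follows the same overall strategy as the paper's: Part~\eqref{thm:1} via the local splitting of Proposition~\ref{prop:cover}, and Part~\eqref{thm:2} by building the bundle locally from the compatibility decomposition, extending the line-bundle summands via \eqref{dagger}, and gluing the local pieces using full faithfulness of $\bF$ on invariant opens.

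The one substantive difference is the twist you use to produce the correct jumps along the $D_\rho$. The paper tensors each extended summand $\overline\mcL$ with the character line bundle $\chi^{-u}$ (the structure sheaf with $T$-action shifted by $-u$); you tensor instead with the fractional ideal $\CO_Y\bigl(-\sum_\rho u(\rho)\,D_\rho\bigr)$. Your version has the virtue that the restriction to $W\times S$ is literally $\pi^*\E$, which makes the verification $\bF(\V|_Y)\cong(\E,\{\E^\rho(i)\})|_U$ transparent. The cost is that each $D_\rho$ meeting $Y$ must be Cartier there --- precisely the concern you flag at the end. In the factorial case this is automatic, but condition \eqref{dagger} is strictly weaker than factoriality (it holds, for instance, for any toric variety under the big torus, singular or not), and on a singular affine toric chart the boundary divisors $D_\rho$ need not be Cartier. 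The character twist $\chi^{-u}$ is always an invertible sheaf, so the paper's formulation sidesteps this difficulty entirely; your proof as written covers only the factorial case cleanly.

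As for your ``most delicate point'' (condition~(ii)): the paper dissolves it by reversing the order of construction. Rather than starting from an invariant affine $Y$ and shrinking until the compatibility decomposition holds on its stacky image, it starts from the compatibility neighbourhood $U\subset Z\times S$ supplied by Definition~\ref{def:compatible}, sets $V=\pi^{-1}(U)$, takes $Y$ to be the union of the closures in $X\times S$ of the orbits lying in $V$, and only then shrinks $Y$ to an invariant affine containing $\co\times s$. With this order one has $\pi\bigl(Y\cap(W\times S)\bigr)\subset U$ by construction, so the decomposition is automatically valid where needed and your dimension-count worries do not arise.
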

\begin{proof}
Consider any $T$-equivariant vector bundle $\V$ on $X\times S$.
From  Proposition \ref{prop:cover}, $\V$ locally splits as 
$$
\V=\bigoplus_{[u]} \V_{[u]}
$$
with each $\V_{[u]}$ further splitting in a sum of equivariant line bundles $\bigoplus_i \mcL_{[u]}^i$ generated by invariant sections in weights mapping to $[u]$. Then $\pi_*({\V_{[u]}}|_{W\times S})_0$ gives the desired local splittings of $\pi_*(\V|_{W\times S})$ satisfying the compatibility condition. This shows part \ref{thm:1} of the theorem.
  
For part \ref{thm:2}, we suppose that $X$ satisfies $\eqref{dagger}$.  Let $(\E,\{\E^\rho(i)\})$ be an $X$-compatible $(Z\times S,\xray)$-filtration. We will construct a corresponding vector bundle on $X$ via a local construction which  glues. Without loss of generality, we will assume that $S$ is affine.

Fix a $T$-orbit $\co$ of $X$, a closed point $s$ of $S$,  and let $U$ and $\E_{[u]}$  be as in Definition \ref{def:compatible} above. 
	Let $V=\pi^{-1}(U)$ and let $Y$ consist of the closures of the orbits in $V$; note that $Y$ is an open subscheme of $X\times S$, and contains $\co\times s$.
After possibly shrinking $V$, we may assume that $Y$ is $T$-invariant, open, affine, and contains $\co\times s$, with $V=Y\cap W$.

	Define the sheaf
	$\F_{[u]}$ on $V$ as the pullback of the sheaf $\E_{[u]}$ on $U$.
	Due to Definition \ref{def:compatible}, this decomposes as a direct sum of equivariant line bundles
$$
\F_{[u]}=\bigoplus \mcL_i.
$$
We extend this to an equivariant vector bundle on $Y$ via
$$
\overline \F_{[u]}=\chi^{-u}\otimes \bigoplus \overline \mcL_i
$$
where the $\overline \mcL_i$ are as in \eqref{dagger}.
Now let $\F=\bigoplus \overline \F_{[u]}$. It follows from our construction that the image of the vector bundle $\F$ under $\bF$ is the restriction of $(\E,\{\E^\rho(i)\})$ to $U$, where we consider only those filtrations indexed by $\rho\in\xray_\co$.

Varying $\co$ and $s$, we now define a vector bundle  on $X\times S$ by gluing the sheaves $\F$ constructed above. Indeed, since locally the images of the corresponding vector bundles under the functor $\bF$ agree, these sheaves satisfy the necessary conditions for gluing by Theorem \ref{thm:stacky}.
Hence, we may conclude that the set of $X$-compatible $(Z\times S,\xray)$-filtrations lies in the image of $\bF$.

\end{proof}

If we do not impose condition \eqref{dagger}, not every $X$-compatible $(Z\times S,\xray)$-filtration needs to lie in the image of $\bF$. Indeed, consider the following example:

\begin{ex}
 \label{ex:not-surjective} 
 Consider the affine variety $X\subset \A^3$ given by the equation $f=x_1x_2-x_3^2$, with the $K^*$-action given by the grading $\deg x_1 = - \deg x_2 = 1$ and $\deg x_3 =0$. The variety $X$ has an $A_1$-singularity at the origin, which is non-factorial. In fact, we will see that \eqref{dagger} is not fulfilled. We have $W = X \setminus \{0\}$. The quotient $[W/K^*]$ is $\AA^1$ with a doubled point at the origin. The quotient map is given by the inclusion $K[x_3] \hookrightarrow K[x_1,x_2,x_3]/(f)$, but the orbits $\co_1 = \{(x,0,0 \mid x \in K^*)\}$ and $\co_2 = \{(0,y,0) \mid y \in K^*\}$ are mapped to the two different instances of the origin $0_1$ and $0_2$, respectively.

Now, we consider the line bundle $\E := \CO([0_1])$ on $Z$. Since $\xray = \emptyset$ this defines a $(Z,\xray)$-filtration and   the $X$-compatibility condition is automatically fulfilled. Indeed,  $\E$ is a line bundle. But $\pi^*\E = \CO(\co_1)$ does not extend to a line bundle on $X$, since $\overline \co_1$ is not Cartier at the origin. This shows that the  $(Z,\xray)$-filtration given by $\E$ does not lie in the image of $\bF$.
\end{ex}



\section{The toric situation}
\label{sec:toric-situation}
In this section we consider the situation of a toric variety $X$ defined by a fan $\Sigma$ living in $N \otimes \QQ$. As mentioned above, \eqref{dagger} is satisfied in this case. The only orbit whose points have finite stabilizers is the embedded torus $T$ itself, and the prime divisors outside of $W$ are indexed by the rays in $\Sigma$, which we denote by $\Sigma^{(1)}$.
Now, $Z=[W/T]$ is just a point. Hence, we consider filtrations $E^\rho(i)$ of some vector space $E \cong K^r$ indexed by $\xray = \Sigma^{(1)}$.

 The $T$-orbits correspond to cones in the fan $\Sigma$ and an orbit $\co(\sigma)$ is contained in $D_\rho$ if and only if the ray of $\rho$ is contained in the cone $\sigma$. Now, $M_{\co(\sigma)} = M/\sigma^\perp$ holds and our compatibility condition translates as follows. For every maximal cone $\sigma$ we have a decomposition
\begin{equation*}E=\bigoplus_{[u]\in M/\sigma^\perp} E_{[u]}
\end{equation*}
such  that for every ray $\rho \in \sigma(1)$ we have
\begin{equation*}E^\rho(i)=\bigoplus_{\substack{
u(\rho)\geq i}} E_{[u]}.
\end{equation*}
Here, we identify the ray $\rho$ with its primitive lattice generator. In the non-singular case the primitive generators $\rho$ of rays in $\sigma$ can be completed to a lattice basis of $N$  and the above conditions then just mean that for every cone $\sigma \in \Sigma$ there is a basis for $E$ such that all elements of the corresponding filtrations are spanned by some subset of this basis.
Hence, we rediscover Klyachko's classification result. 
\begin{theorem}[{\cite[Theorem~0.1.1]{klyachko:89a}}]
The category of toric vector bundles on a smooth toric variety $X_\Sigma$ is equivalent to the category of vector spaces $E$ with a collection of decreasing filtrations $E^\rho(i)$, $\rho \in \Sigma^{(1)}$, such that for any cone $\sigma \in \Sigma$ the filtrations  $E^\rho(i)$, $\rho \in \sigma(1)$ consist of coordinate subspaces for some basis of $E$.
\end{theorem}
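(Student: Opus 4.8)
The plan is to read off this statement from the special case $S=\Spec K$ of Theorem~\ref{thm:compatible}, after making the $(Z,\xray)$-filtration data and the $X$-compatibility condition completely explicit in the toric setting; essentially all of the content already sits in Theorem~\ref{thm:compatible} and the discussion of this section, and what remains is to match the two formulations of compatibility when $X_\Sigma$ is smooth. First I would record the simplifications. For $X=X_\Sigma$ the dense torus is the unique orbit on which $T$ acts with finite (indeed trivial) stabilizer, so the DM-locus is $W=T$ and $Z=[T/T]$ is the point $\Spec K$; hence a $(Z,\xray)$-filtration is nothing but a finite-dimensional $K$-vector space $E$ together with a full decreasing filtration $E^\rho(i)$ for each $\rho\in\xray$, and by the orbit--divisor correspondence $\xray=\Sigma^{(1)}$ (Remark~\ref{rem:xray}). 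As noted at the start of this section (and before Theorem~\ref{thm:compatible}), condition~\eqref{dagger} is satisfied here, so part~(2) of Theorem~\ref{thm:compatible} already yields an equivalence between toric vector bundles on $X_\Sigma$ and $X$-compatible $(Z,\Sigma^{(1)})$-filtrations.

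Next I would unwind Definition~\ref{def:compatible}. The $T$-orbits of $X_\Sigma$ are the $\co(\sigma)$, $\sigma\in\Sigma$, with $\co(\sigma)\subset D_\rho$ iff $\rho\in\sigma(1)$; hence $\xray_{\co(\sigma)}=\sigma(1)$ and $M_{\co(\sigma)}=M/\sigma^\perp$. Since $Z$ is a point and $S=\Spec K$, the only open $U$ containing $Z_{\co(\sigma)}$ is $Z$ itself, and an invertible $\CO_Z$-module is just a line, so the compatibility condition becomes: for every cone $\sigma\in\Sigma$ there is a direct sum decomposition
\begin{equation*}
E=\bigoplus_{[u]\in M/\sigma^\perp}E_{[u]}\quad\text{with}\quad E^\rho(i)=\bigoplus_{u(\rho)\ge i}E_{[u]}\ \text{ for all }\rho\in\sigma(1),
\end{equation*}
the right-hand sum being well defined because $u(\rho)$ depends only on $[u]$ when $\rho\in\sigma$. (It suffices to impose this for maximal cones, since a decomposition for $\sigma$ restricts, via the surjection $M/\sigma^\perp\to M/\tau^\perp$ and regrouping of summands, to one for every face $\tau\preceq\sigma$, and every cone of $\Sigma$ is a face of a maximal one.)

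Finally I would check that, for $X_\Sigma$ smooth, this is equivalent to the coordinate-subspace condition in the statement. If such a decomposition exists, any basis $B$ of $E$ obtained as a union of bases of the $E_{[u]}$ has the property that each $E^\rho(i)$ ($\rho\in\sigma(1)$) is spanned by $\{b\in B:b\in E_{[u]}\text{ with }u(\rho)\ge i\}$, hence is a coordinate subspace for $B$. Conversely, let $\sigma$ be smooth and $B$ a basis of $E$ such that each $E^\rho(i)$ ($\rho\in\sigma(1)$) is spanned by a subset of $B$; since $B$ is a basis this forces $E^\rho(i)=\operatorname{span}\{b\in B:b\in E^\rho(i)\}$. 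For $b\in B$ set $i_\rho(b)=\max\{i:b\in E^\rho(i)\}$, a finite integer because the filtration is full. As $\sigma$ is smooth its primitive ray generators extend to a $\ZZ$-basis of $N$, so $u\mapsto(u(\rho))_{\rho\in\sigma(1)}$ induces an isomorphism $M/\sigma^\perp\cong\ZZ^{\sigma(1)}$; let $[u_b]\in M/\sigma^\perp$ be the unique class with $u_b(\rho)=i_\rho(b)$ for all $\rho\in\sigma(1)$. Then $E_{[u]}:=\operatorname{span}\{b\in B:[u_b]=[u]\}$ is a decomposition of the desired form, since $b\in E^\rho(i)\iff i_\rho(b)\ge i\iff u_b(\rho)\ge i$.

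I do not anticipate a genuine obstacle: the substance is in Theorem~\ref{thm:compatible}, and the rest is bookkeeping. The one point requiring care is the last equivalence, where smoothness of $\sigma$ is exactly what allows an arbitrary prescription of jumps $i_\rho(b)$ along the rays of $\sigma$ to be realized by a single character $u_b\in M$; this is where the smoothness hypothesis of the theorem enters, and it is also why for singular (even simplicial) fans the simple coordinate-subspace phrasing must be replaced by the full condition of Definition~\ref{def:compatible}.
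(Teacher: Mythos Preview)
Your proposal is correct and follows essentially the same route as the paper: identify $Z$ with a point and $\xray$ with $\Sigma^{(1)}$, invoke Theorem~\ref{thm:compatible} (using that \eqref{dagger} holds in the toric case), translate Definition~\ref{def:compatible} into the decomposition $E=\bigoplus_{[u]\in M/\sigma^\perp}E_{[u]}$, and then observe that for smooth $\sigma$ this is equivalent to the coordinate-subspace formulation. Your treatment of this last equivalence is more explicit than the paper's, which simply remarks that smoothness lets one complete the ray generators to a lattice basis; your construction of $[u_b]$ from the jump indices $i_\rho(b)$ via the isomorphism $M/\sigma^\perp\cong\ZZ^{\sigma(1)}$ spells out exactly where smoothness enters.
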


\begin{rem}
   More generally, for $S$ an arbitrary scheme, Payne describes equivariant vector bundles on $X\times S$ in terms of filtrations of vector bundles on $S$ satisfying a rank condition
 \cite[Proposition 3.13]{payne:08a}. We leave it to the reader to check that Payne's rank condition is equivalent to our compatibility condition.
\end{rem}

\section{Global Sections and Standard Constructions}
As before, let $X$ be a $T$-variety with DM-locus $W$,  $Z=[W/T]$ the quotient stack, and let $\xray$ parametrize invariant prime divisors of $X$ not meeting $W$.
In this section, we describe how to calculate the global sections of an equivariant vector bundle on $X$ in terms of its $(Z,\xray)$-filtration, and how certain standard constructions of equivariant vector bundles translate to filtrations. This may all be generalized in a straightforward fashion to bundles on $X\times S$, for $S$ an arbitrary scheme; we leave the details to the reader.
\subsection{Global Sections}\label{sec:gs}
Let $\V$ be a $T$-equivariant vector bundle on  $X$ with corresponding $(Z,\xray)$-filtration $(\E,\{\E^\rho(i)\})$. It is straightforward to recover the graded components of $H^0(X,\V)$ from the filtration $(\E,\{\E^\rho(i)\})$.

For each $\rho\in\xray$, let $i_\rho$ be the largest $i\in \NN$ such that $\E^\rho(i)\neq 0$. Let $\Box_\V\subset M_\QQ$ be the polyhedron defined by 
$$
\Box_\V=\left\{u\in M_\QQ\ |\ u(\rho)\leq i_\rho\ \forall \rho\in\xray\right\}.
$$
Using notation as above, for any $u\in M$ let $\CO_Z(u)=\pi_*(\CO_W)_u$.
\begin{prop}\label{prop:gs}
Let $u\in M$. Then 
$$
H^0(X,\V)_u=\begin{cases}
H^0\left(Z,\E\otimes \CO_Z(u)\right) & \xray=\emptyset\\
\bigcap_{\rho\in\xray} H^0\left(Z,\E^\rho(u(\rho))\otimes \CO_Z(u)\right) & \xray\neq \emptyset
\end{cases}.
$$
If $u\notin\Box_\V$, then $H^0(X,\V)_u=0$.
\end{prop}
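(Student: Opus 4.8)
The plan is to reduce the computation of the graded piece $H^0(X,\V)_u$ to a computation on the quotient stack $Z$, exploiting the fundamental identification $\V|_W = \pi^*(\E)$ from the proof of Theorem~\ref{thm:F}. First I would handle the case $\xray = \emptyset$, where $W = X$. In that situation, the weight-$u$ semi-invariant global sections of $\V$ are exactly the $T$-invariant global sections of $\V\otimes\chi^{-u}$, and twisting by $\chi^{-u}$ corresponds on the stack side to twisting $\E$ by $\CO_Z(u) = \pi_*(\CO_W)_u$. Concretely, using the adjunction $H^0(W,\pi^*\F)_0 = H^0(Z,\F)$ for a $T$-equivariant sheaf together with part (1) of Lemma~\ref{lemma:stacky} (which gives $\pi^*(\pi_*(\mcL)_0)\cong\mcL$), one gets $H^0(X,\V)_u = H^0(W,\V)_u = H^0(Z, \E\otimes\CO_Z(u))$. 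This is essentially bookkeeping with the pushforward/pullback adjunction for quotient stacks.

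Next, for $\xray\neq\emptyset$, the point is that a weight-$u$ section $s$ of $\V$ over $W$ extends to all of $X$ if and only if it extends across each divisor $D_\rho$, and by Remark~\ref{sec:rem-alternativ-V-rho} (resp.\ the construction of $\E^\rho(i)$ and Remark~\ref{sec:rem-rank-one-filt}) extending across $D_\rho$ means precisely that $s$, viewed after the weight shift, lies in $\pi_*(\V^\rho(u(\rho)))_0 = \E^\rho(u(\rho))$ — after tensoring with $\CO_Z(u)$ to account for the weight. So I would argue:
\begin{equation*}
H^0(X,\V)_u = \bigl\{\, s\in H^0(W,\V)_u \ \big|\ s \text{ extends across every } D_\rho \,\bigr\}
= \bigcap_{\rho\in\xray} H^0\bigl(Z, \E^\rho(u(\rho))\otimes\CO_Z(u)\bigr),
\end{equation*}
where each term in the intersection sits inside $H^0(W,\V)_u = H^0(Z,\E\otimes\CO_Z(u))$ via the inclusion $\E^\rho(u(\rho))\hookrightarrow\E$. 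The normality of $X$ is what guarantees that extending across all the codimension-one $D_\rho$ suffices to extend to all of $X$ (the complement of $W\cup\bigcup D_\rho$ in $X$ has codimension $\geq 2$), just as in the proof of Theorem~\ref{thm:F}.

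For the last sentence, if $u\notin\Box_\V$ then by definition $u(\rho) > i_\rho$ for some $\rho\in\xray$, and $i_\rho$ was chosen as the largest $i$ with $\E^\rho(i)\neq 0$, so $\E^\rho(u(\rho)) = 0$; hence the corresponding factor in the intersection is $H^0(Z,0) = 0$ and therefore $H^0(X,\V)_u = 0$. The main obstacle is making the identification "section extends across $D_\rho$ $\iff$ lies in $\E^\rho(u(\rho))\otimes\CO_Z(u)$" precise and checking the weight twists match up — this is local on $Z$, so I would reduce (via Proposition~\ref{prop:cover}) to the rank-one case where $\V\cong\CO$ is generated by a semi-invariant section, and there it follows from Lemma~\ref{lemma:tdiv} on vanishing orders exactly as in the proof of Lemma~\ref{lemma:stackfil1}. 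Everything else is the adjunction bookkeeping of the first paragraph applied termwise.
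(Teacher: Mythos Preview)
Your proposal is correct and is precisely the unwinding of the construction in Section~\ref{sec:gen} that the paper intends when it says the proposition ``follows immediately from construction of $(\E,\{\E^\rho(i)\})$.'' One small correction: when $\xray = \emptyset$ you need not have $W = X$, only that $X \setminus W$ has codimension at least two---but since $X$ is normal and $\V$ is locally free this still yields $H^0(X,\V) = H^0(W,\V)$, exactly the normality argument you already invoke in the general case.
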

\begin{proof}
	Follows immediately from construction of $(\E,\{\E^\rho(i)\})$ in Section \ref{sec:gen}.
\end{proof}

\subsection{Line Bundles}\label{sec:lb}
Let $D$ be a $T$-invariant Cartier divisor on $X$. Then $D=D'+\sum_{\rho\in\xray} a_\rho D_\rho$ where the components of $D'$ all meet $W$. The line bundle $\CO_X(D)$ corresponds to the $(Z,\xray)$-filtration
$(\E,\{\E^\rho(i)\})$, where $\E=\CO(\pi_*(D'|_W))$ and 
\begin{align*}
	\E^\rho(i)=\begin{cases}
\E & i \leq -a_\rho \\
0 & i > -a_\rho
	\end{cases}.
	\end{align*}

\subsection{Sums, Products, and Duals}\label{sec:sc}
Let $\V$ and $\W$ be two equivariant vector bundles on $X$ with corresponding filtrations
$(\E,\{\E^\rho(i)\})$ and $(\F,\{\F^\rho(i)\})$. It is straightforward to check that
\begin{align*}
&\bF(\V\oplus\W)&&=(\E\oplus \F,\{\E^\rho(i)\oplus\F^\rho(i)\}); \\
&\bF(\V\otimes \W)&&=(\E\otimes \F,\{\G^\rho(i)\}),\qquad&\textrm{where}\qquad \G^\rho(i)&=\sum_{s+t=i}\E^\rho(s)\otimes\F^\rho(t);\\
&\bF(\wedge^k \V)&&=(\wedge^k \E,\{\G^\rho(i)\}),\qquad&\textrm{where}\qquad \G^\rho(i)&=\sum_{\substack{s_1,\ldots,s_k\\\sum s_j=i}}\E^\rho(s_1)\wedge\ldots\wedge\E^\rho(s_k);\\
&\bF(\Sym^k \V)&&=(\Sym^k \E,\{\G^\rho(i)\}),\qquad&\textrm{where}\qquad \G^\rho(i)&=\sum_{\substack{s_1,\ldots,s_k\\\sum s_j=i}}\E^\rho(s_1)\cdot\ldots\cdot\E^\rho(s_k);\\
&\bF(\V^*)&&=(\E^*,\{\G^\rho(i)\}),\qquad&\textrm{where}\qquad \G^\rho(i)&= (\E/\E^\rho(1-i))^* .\\
\end{align*}

\section{Tangent Bundles}
Let $X$ be a smooth $T$-variety. Then the tangent sheaf $\T_X$ is locally free, and since it is defined functorially, it comes with a natural equivariant structure. In the following, we will  describe the corresponding $(Z,\xray)$-filtration, and then use this to calculate global vector fields.

\subsection{General Description}
We first recall some details about Atiyah extensions:
Let $\mcL$ be a line bundle on a smooth tame Artin stack $Z$ and let $E=\Spec_Z \bigoplus_{u\in\ZZ} \mcL^u\chi^u$ be the corresponding $\GG_m$-torsor with projection $\pi:E\to Z$. Then there is an exact sequence
$$
\begin{CD}
0@>>> \CO_Z @>>> \pi_*(\T_E)_0 @>>> \T_Z @>>>0.
\end{CD}
$$
The corresponding class $\eta(\mcL)\in\Ext^1(\T_Z,\CO_Z)\cong H^1(Z,\Omega_Z)$ is called the Atiyah class of $\mcL$. 
Note that $\eta$ is simply the image of $\mcL$ under the group homomorphism $H^1(Z,\CO_Z^*)\to H^1(Z,\Omega_Z)$ induced by the map $\CO_Z^*\to \Omega_Z$ sending $f$ to $df/f$ \cite[cf. 3.3.3]{sernesi:06a}. Here, $d:\CO_Z\to\Omega_Z$ denotes the canonical derivation.

Now let $X$ be a smooth $T$-variety, and $W\subset X$ its DM-locus.
For $u\in M$, let $\mcL^u=\pi_*(\CO_W)_{-u}$; then 
$W=\Spec_Z \bigoplus_{u\in M} \mcL^{u}\chi^u$.

\begin{prop}\label{prop:atiyah}
If $X$ is smooth, there is an exact sequence
\begin{align}\label{eqn:atiyah}
\begin{CD}
0@>>> N\otimes \CO_Z @>\iota >> \pi_*(\T_W)_0 @>\phi>> \T_Z @>>>0.
\end{CD}
\end{align}
and the corresponding class $$\eta\in\Ext^1(\T_Z,N\otimes\CO_Z)\cong \Hom(M,H^1(Z,\Omega_Z))$$ is given by the map $u\mapsto \eta(\mcL^u)$.  
\end{prop}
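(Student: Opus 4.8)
The plan is to reduce the statement to the classical Atiyah-extension situation covered by the recollection just above, using the fact that $W \to Z$ is a $\GG_m$-torsor only ``one character at a time.'' First I would observe that since $T$ has character lattice $M$, choosing a basis $e_1,\dots,e_k$ of $M$ gives line bundles $\mcL^{e_1},\dots,\mcL^{e_k}$ on $Z$, and $W = \Spec_Z \bigoplus_{u\in M}\mcL^u\chi^u$ is the fiber product over $Z$ of the corresponding $\GG_m$-torsors $E_j = \Spec_Z\bigoplus_{m\in\ZZ}(\mcL^{e_j})^m\chi^{me_j}$. The relative tangent sheaf of $W\to Z$ is then canonically $N\otimes\CO_W$ (the torus $T$ acts freely-up-to-finite-stabilizers on $W$, giving a trivialized rank-$k$ vertical bundle spanned by the fundamental vector fields of the one-parameter subgroups), and taking $T$-invariant pushforward of the relative tangent sequence
$$
\begin{CD}
0 @>>> N\otimes\CO_W @>>> \T_W @>>> \pi^*\T_Z @>>> 0
\end{CD}
$$
yields \eqref{eqn:atiyah}, because $\pi_*$ is exact on the weight-$0$ part (the torsor is affine over $Z$, or one works on a cover where $W\to Z$ is an honest $T$-principal bundle and everything is étale-local) and $\pi_*(\pi^*\T_Z)_0 = \T_Z$ and $\pi_*(N\otimes\CO_W)_0 = N\otimes\CO_Z$ by Lemma~\ref{lemma:stacky}(1),(3). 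The smoothness of $X$ enters to guarantee $\T_W$ is locally free so that $\pi_*(-)_0$ behaves well and the sequence stays exact on the right.

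Next I would identify the extension class. The key point is that $\Ext^1_{\CO_Z}(\T_Z, N\otimes\CO_Z) \cong N\otimes\Ext^1(\T_Z,\CO_Z) \cong N\otimes H^1(Z,\Omega_Z) \cong \Hom(M, H^1(Z,\Omega_Z))$, the last isomorphism because $N = \Hom(M,\ZZ)$; so to pin down $\eta$ it suffices to compute, for each $u\in M$, the component of $\eta$ lying in the summand cut out by $u$, i.e. the pushout of \eqref{eqn:atiyah} along the map $N\otimes\CO_Z \to \CO_Z$ dual to $u$. I would check that this pushout sequence is exactly the Atiyah sequence of the line bundle $\mcL^u$: the point is that the $\GG_m$-torsor associated to $\mcL^u$ is the quotient $W/\ker(u)$ (where $\ker(u)\subset T$ is the subtorus on which $u$ vanishes), and pushing out the relative tangent sequence of $W\to Z$ along ``contract the vertical $N$ to the line picked out by $u$'' is the same as first taking the relative tangent sequence of $W/\ker(u) \to Z$, which by the recollection preceding the proposition has class $\eta(\mcL^u)$. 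Tracking the normalizations (the $\chi^u$ versus $\chi^{-u}$ twist, i.e. the sign convention $\mcL^u = \pi_*(\CO_W)_{-u}$) is what fixes the statement to be $u\mapsto\eta(\mcL^u)$ rather than $u\mapsto\eta(\mcL^{-u})$.

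I expect the main obstacle to be the bookkeeping in the stacky setting: making rigorous that ``$\pi_*(-)_0$ of the relative tangent sequence is exact'' and that the pushout-of-extensions computation is compatible with the identification $\Ext^1(\T_Z,N\otimes\CO_Z)\cong\Hom(M,H^1(Z,\Omega_Z))$, all while $Z$ is only a tame Artin stack rather than a scheme. The clean way around this is to pass to a cover $E\to Z'$ as in the definition of quotient stack on which $W\to Z$ pulls back to a genuine $T$-principal bundle $T\times Z' \to Z'$ (or to work smooth-locally on $Z$), where the relative tangent sequence is the trivial sequence $0\to N\otimes\CO \to N\otimes\CO \oplus \T \to \T\to 0$ and all the pushforward and pushout assertions are the standard facts for torsors recalled above for a single $\GG_m$; one then checks the descent data match, which is formal. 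A secondary, purely algebraic point to verify carefully is the additivity $\eta(\mcL^u\otimes\mcL^{u'}) = \eta(\mcL^u)+\eta(\mcL^{u'})$, i.e. that $u\mapsto\eta(\mcL^u)$ really is linear in $u$ and hence defines an element of $\Hom(M,H^1(Z,\Omega_Z))$; this follows from the remark in the excerpt that $\eta$ is the image of $\mcL$ under the group homomorphism $H^1(Z,\CO_Z^*)\to H^1(Z,\Omega_Z)$ induced by $f\mapsto df/f$, together with $\mcL^{u+u'}\cong\mcL^u\otimes\mcL^{u'}$ from Lemma~\ref{lemma:stacky}(3).
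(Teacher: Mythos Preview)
Your proposal is correct and follows essentially the same reduction as the paper (pass to a cover on which $W\to Z$ is an honest principal $T$-bundle), but the execution differs in emphasis. The paper works in explicit local coordinates: on an open $U\subset Z$ where $W$ and $\T_Z$ trivialize, it writes down generators $\partial_v$ ($v\in N$) and lifts $\widetilde\alpha$ of derivations $\alpha\in\T_Z(U)$, then defines $\iota$ and $\phi$ by hand on these generators. For the extension class, rather than pushing out along $u$ and passing to the quotient $W/\ker(u)$ as you do, the paper identifies inside $\pi_*(\T_W)_0$ the subsheaf $\pi'_*(\T_{W'})_0$ for $W'=\Spec_Z\bigoplus_{k\in\ZZ}\mcL^{ku}\chi^{ku}$ and observes that this yields the Atiyah sequence of $\mcL^u$ directly. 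Your pushout/quotient description and the paper's sub-torsor description are two faces of the same construction; your version has the advantage of making the identification $\Ext^1(\T_Z,N\otimes\CO_Z)\cong\Hom(M,H^1(Z,\Omega_Z))$ transparently compatible with the computation, while the paper's version avoids any discussion of linearity in $u$ by simply exhibiting the relevant sub-sequence. One small caution: your sentence ``the $\GG_m$-torsor associated to $\mcL^u$ is the quotient $W/\ker(u)$'' needs care when $u$ is not primitive, but since you already note that linearity plus values on a basis suffice, this is harmless.
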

\begin{proof}
It suffices to prove the claims in the case that $W$ is a principal $T$-bundle over a smooth variety $Z$. Without loss of generality, we may assume that the $\mcL^u$ are subsheaves of $K(Z)$.
Let $U\subset Z$ be any open affine set on which $W$ and $\T_Z$ trivialize. Then there exists $f\in K(Z)$ and $w\in N$ such that $\mcL^u$ is generated by $f^{u(w)}$ on $U$. 
$\pi_*(\T_W)_0$ is then  generated by the invariant derivations
\begin{align*}
	\partial_v:&gf^{u(v)}\chi^u\mapsto u(v)gf^{u(w)}\chi^u\qquad &v\in N\\
	\widetilde{\alpha}:&gf^{u(w)}\chi^u\mapsto \alpha(g)f^{u(w)}\chi^u\qquad &\alpha\in\T_Z(U). 
\end{align*}
The map $\iota$ sends $v\otimes g$ to $g\partial_v$; $\phi$ sends $\delta_v$ to 0 and $\widetilde{\alpha}$ to $\alpha$. This defines the desired exact sequence.

The above exact sequence determines, for any $u\in M$, an  injective map
$\CO_Z\to \pi_*(\T_W)_0$ by sending $g$ to $g\sum u(v)\partial_v$. The image of this map is naturally contained in $\pi'_*(\T_{W'})_0$, where $W'=\Spec_Z \bigoplus_{k\in\ZZ} \mcL^{ku}\chi^{ku}$ with projection $\pi':W'\to Z$; this sequence can be completed to the extension
$$
\begin{CD}
	0@>>> \CO_Z @>\iota >> \pi_*(\T_{W'})_0 @>\phi>> \T_Z @>>>0.
\end{CD}
$$
corresponding to $\eta(\mcL^u)$.
Thus, the extension in \eqref{eqn:atiyah} corresponds to the class $u\mapsto \eta(\mcL^u)$.
\end{proof}

\begin{theorem}\label{thm:tangent}
	Let $X$ be a smooth $T$-variety. Then the image of $\T_X$ under $\bF$ is given by the $(Z,\xray)$ filtration $(\E,\{\E^\rho(i)\})$, where 
$\E$ corresponds to the extension class $u\mapsto \eta(\mcL^u)\in \Ext^1(\T_Z,N\otimes \CO_Z)$ and 
\begin{align*}
	\E^\rho(i)=\begin{cases}
\E & i\leq 0\\
\iota(\langle \rho \rangle \otimes \CO_Z) & i=1\\
0 & i>1
	\end{cases}
\end{align*}
with $\iota(\langle \rho \rangle \otimes\CO_Z)$ the image of $\langle \rho \rangle \otimes \CO_Z$ in $\E$ under $\iota$. 
\end{theorem}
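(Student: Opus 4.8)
The plan is to compute $\bF(\T_X)$ directly from the definitions in Section~\ref{sec:gen}, using the Atiyah-extension description of $\pi_*(\T_W)_0$ from Proposition~\ref{prop:atiyah}. First I would identify the underlying vector bundle $\E$. By construction $\E = \pi_*(\T_X|_W)_0 = \pi_*(\T_W)_0$, since $\T_X|_W = \T_W$ ($W$ open in $X$), and Proposition~\ref{prop:atiyah} already identifies this as the bundle sitting in the exact sequence \eqref{eqn:atiyah}, i.e.\ the one corresponding to the extension class $u\mapsto\eta(\mcL^u)$. This handles the statement about $\E$ immediately.

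The substance is the computation of the filtration steps $\E^\rho(i)$. Fix $\rho\in\xray$ with associated divisor $D_\rho$, and recall from Remark~\ref{sec:rem-alternativ-V-rho} that $\E^\rho(i) = \pi_*(\T_W^\rho(i))_0$, where $\T_W^\rho(i)\subseteq \T_X|_W$ consists of vector fields vanishing to order at least $i$ along $D_\rho$. Since the construction is local on $Z$, I would work on a $T$-invariant affine open $Y=\Spec B\subset X$ meeting $D_\rho$ but no other invariant divisor outside $W$, with $D_\rho$ cut out by a semi-invariant $f\in B$ of some weight; by Lemma~\ref{lemma:tdiv} the weight of $f$ is $u_0$ with $u_0(\rho)=-1$ (after adjusting, one may take $f$ so that $\ord_{D_\rho}$ agrees with the valuation from that lemma). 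A $T$-invariant derivation of $\T_X|_W$ is (in the notation of the proof of Proposition~\ref{prop:atiyah}) a combination of the "vertical" derivations $\partial_v$, $v\in N$, and "horizontal" lifts $\widetilde\alpha$ of vector fields on $Z$. The key local computation: $\partial_v$ applied to a function vanishing to order $k$ along $D_\rho$ again vanishes to order $\geq k$, and $\partial_v(f) = v(\rho)\cdot(\text{something vanishing to order }0)$... more precisely, since $f$ has weight $u_0$ with $u_0(\rho)=-1$, one finds $\partial_\rho$ does \emph{not} preserve the ideal of $D_\rho$ (it drops vanishing order, reflecting that $D_\rho$ itself is not $T$-fixed along the $\rho$-direction), whereas $\partial_v$ for $v$ spanning a complement does preserve it, as do the horizontal lifts $\widetilde\alpha$. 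Translating "vanishing to order $\geq i$" into the weight-zero pushforward: for $i\leq 0$ there is no condition, so $\E^\rho(0)=\E$; for $i=1$ one gets exactly the subsheaf generated by the horizontal lifts together with the $\partial_v$, $v\perp$ nothing... —here I must be careful—the surviving piece in degree $i=1$ is $\iota(\langle\rho\rangle\otimes\CO_Z)$, the rank-one image of the sublattice $\langle\rho\rangle\subset N$; and for $i\geq 2$ nothing survives, giving $\E^\rho(i)=0$. That this image is independent of the choice of local $f$ follows from Lemma~\ref{lemma:stackfil2}, and $\iota(\langle\rho\rangle\otimes\CO_Z)$ is a subbundle because $\rho$ is primitive so $\langle\rho\rangle\otimes\CO_Z$ is a direct summand of $N\otimes\CO_Z$ and $\iota$ is a subbundle inclusion by \eqref{eqn:atiyah}.

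The main obstacle I expect is pinning down \emph{why} the filtration jump occurs precisely at $i=0\to 1$ and produces exactly $\langle\rho\rangle$, rather than some other sublattice or a jump at a different index. This is the place where the normalization of $f$ (its weight $u_0$ satisfying $u_0(\rho)=-1$, via Lemma~\ref{lemma:tdiv}) and the precise action $\partial_v(f^{m}\chi^{mu_0}) = m\,u_0(v)f^{m}\chi^{mu_0}$ from the proof of Proposition~\ref{prop:atiyah} must be combined: a derivation $\sum_v c_v\partial_v + \widetilde\alpha$, when conjugated to have weight zero and tested against the local parameter $f$, vanishes to order $\geq 1$ along $D_\rho$ iff the coefficient vector $\sum c_v v$ lies in the line $\langle\rho\rangle$ (the horizontal part $\widetilde\alpha$ automatically contributes, as it is tangent to $Z$ and does not interact with the $D_\rho$-direction transversally)—I would verify this by a direct local coordinate check on $Y\cap W$, which is the one genuinely computational step. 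Once that local statement is in hand, the global statement follows formally by the gluing already implicit in the construction of $\bF$.
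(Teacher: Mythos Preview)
Your overall strategy matches the paper's: identify $\E$ via Proposition~\ref{prop:atiyah}, then compute the filtrations by a local analysis near each $D_\rho$. However, the local computation you sketch contains a genuine error.

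You assert that the horizontal lifts $\widetilde\alpha$ ``automatically contribute'' to $\E^\rho(1)$, and that $\partial_\rho$ does \emph{not} preserve the ideal of $D_\rho$. Both are false, and if followed they would force $\E^\rho(1)$ to have rank $\dim X-1$ rather than $1$, contradicting the very statement you are proving. The underlying confusion is between a derivation $\xi$ being \emph{tangent} to $D_\rho$ (meaning $\xi(I_{D_\rho})\subset I_{D_\rho}$) and $\xi$ \emph{vanishing} along $D_\rho$ as a section of $\T_X$ (meaning $\xi\in I_{D_\rho}\cdot\T_X$). Since $D_\rho$ is $T$-invariant, \emph{every} $\partial_v$ preserves its ideal; in particular $\partial_\rho$ does (indeed $\partial_\rho(f)=u_0(\rho)f=-f\in(f)$). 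What singles out $\partial_\rho$ is that it \emph{vanishes} along $D_\rho$ as a section: locally it is $f\,\partial_f$ in a transverse coordinate, exactly as $x\,\partial_x$ vanishes at the origin of $\AA^1$. By contrast, the complementary $\partial_v$ and the horizontal lifts $\widetilde\alpha$ extend regularly to $D_\rho$ and are \emph{nonzero} there. Thus only $\partial_\rho$ survives into $\E^\rho(1)$. You seem to sense the problem (``here I must be careful''), but the correction is never carried out, and the final paragraph reverts to the incorrect reasoning.

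The paper sidesteps this pitfall by using the original definition $\E^\rho(u)=\pi_*(\V^\rho)_u\otimes\pi_*(\CO_W)_{-u}$ rather than the vanishing-order description of Remark~\ref{sec:rem-alternativ-V-rho}: it asks, for a weight-$u$ section $s\partial_v$, whether it \emph{extends as a derivation} across $D_\rho$, i.e., whether $(s\partial_v)(t)$ is regular for every regular semi-invariant $t$. This reduces the question to orders of vanishing of \emph{functions}, where there is no ambiguity. Your route via Remark~\ref{sec:rem-alternativ-V-rho} can certainly be made to work, but only after replacing ``preserves the ideal'' by ``vanishes as a section of $\T_X$'' and redoing the case analysis accordingly.
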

\begin{proof}
	Once again, it suffices to show the claim when $W$ is a principal $T$-bundle over a smooth variety $Z$.
	Let $U$ be an open affine set of $Z$ on which $W$ and $\T_X$ trivialize as in the proof of Proposition \ref{prop:atiyah}; we restrict our attention to $W|_{U}$ and use notation as above. 
	
	Let $u\in M$ and $\rho\in \xray$. Consider any semi-invariant function $s$ of weight $u$. Then $s\partial_v$ extends to $D_\rho$ if and only if
for any semi-invariant regular function $t$ of weight $u'$ on $W|_{U}$ extending to $D_\rho$, either $\partial_v t=0$ or $\rho(u)-\rho(u')\leq 0$.
Hence, if $\rho(u)\leq 0$, $s\partial_v$ extends to $D_\rho$. If $\rho(u)=1$,
$s\partial_v$ extends to $D_\rho$ if and only if $u'(v)=0$ for any $u'\in \rho^\perp$, that is, if $v$ is in the span of $\rho$.
Finally, if $\rho(u)\geq 2$, $s\partial_v$ extends to $D_\rho$ if and only if $u'(v)=0$ for any $u'$ satisfying $0\leq u'(\rho) \leq u(\rho)$, but this is impossible.
The claim now follows from Proposition \ref{prop:atiyah}.
\end{proof}
\begin{rem}
	The cotangent bundle of $X$ may be described by combining the above theorem with the discussion of Section \ref{sec:sc}.
	This leads to an even simpler description of the canonical bundle on a smooth  $T$-variety $X$. Indeed, if $n=\dim X$ and $m=\dim T$, $\wedge^n\pi_*(\Omega_W)_0\cong \wedge^m (M\otimes \CO_Z)\otimes \wedge^{n-m}\Omega_Z\cong \omega_Z$. Hence, $\omega_X$
	corresponds to the $(Z,\xray)$-filtration $(\E,\{\E^\rho(i)\})$	with $\E=\omega_Z$ and 
	$$
	\E^\rho(i)=\begin{cases}
\omega_Z& i\leq -1\\
0 & i>-1
	\end{cases}.
	$$
	Combined with \ref{sec:lb}, this gives a result similar to that of \cite[Theorem 3.21]{petersen:11a}.
\end{rem}

\subsection{Vector Fields}
Using the above description of the tangent bundle, Proposition \ref{prop:gs} allows one to calculate its global sections.
We now make the formula more explicit in a specific case, namely for complete rational complexity-one $T$-varieties.
We shall first assume that $Z$ is separated. In this case, $Z$ is a weighted projective line. That is, we may view $Z$ simply as a projective line on which divisors with some fixed rational denominators are allowed at a finite number of points.

Under these assumptions, $$W=\Spec_{\PP^1} \bigoplus_{u\in M} \mcL^u\chi^u $$ where 
$$\mcL^u=\CO_{\PP^1}\left(\sum_{P\in\mcP} \floor{u(v_P)}P\right)$$
such that
$\mcP$ is some finite subset of points in $\PP^1$ and for each $P\in\mcP$, $v_P\in N_\QQ$.
If the number of points $P$ with $v_P\notin N$ exceeds 2, then $\E=\pi_*(\T_W)_0$ will not split, making calculations somewhat tedious. 
However, 
since the coarse moduli map $\psi:Z\to \PP^1$ is exact and preserves cohomology, we can push forward twists of $\pi_*(\T_W)_0$ by $\CO_Z(u)$ to $\PP^1$ and easily calculate global sections there, since all bundles split.
Note that the pushforward of $\CO_Z(u)$ is just $\mcL^{-u}$.

For each $P\in\mcP$, let $n_P$ be the smallest natural number such that $n_Pv_P\in N$. Define for $u\in M$, $P\in\mcP$
\begin{align*}
\alpha_P(u)&=-u(v_P)+\frac{1-n_P}{n_P},&\qquad
\gamma(u)&=\sum_{P\in\mcP}\floor{\alpha_P(u)}P.
\end{align*}
Let $\mcP(u)$ consist of those $P\in\mcP$ such that $\alpha_P(u)\in\ZZ$, and let $\overline\mcP(u)=\mcP\setminus\mcP(u)$.
Set $\bv(u):=\sum_{P\in\mcP(u)} v_P$ and, if $\bv(u)\neq0$, let $V(u)$ be any codimension-one subspace of $N_\QQ$ not containing $\bv(u)$.

For any $v\in N_\QQ$, let $\partial_v:K(X)\to K(X)$  be the invariant rational derivation sending $f\cdot \chi^u\to u(v) f \cdot \chi^u$ for any $u\in M$, $f\in K(\PP^1)$. Likewise, let $\partial_y$ be the invariant rational  derivation sending $f\cdot \chi^u\to  \frac{\partial f}{\partial y} \cdot \chi^u$ for any $f\in K(\PP^1)$, where $y$ is a local parameter of $0\in\PP^1$ with a single pole at $\infty$.

Now, set
\begin{align*}
	e(u):&=\partial_y+\sum_{P\in\mcP(u)\setminus\{\infty\}} \frac{1}{y-P} \partial_{v_P}-
	\varepsilon\partial_{\bv(u)}
\end{align*}
where if $\overline\mcP(u)=\emptyset$, $\varepsilon=0$, and otherwise $\varepsilon=\frac{1}{y-P}$ for any $P\in \overline\mcP(u)\setminus\infty$.

\begin{prop}\label{prop:gstan}
	Let $u\in M$. Suppose $u(\rho)<1$ for all $\rho\in\xray$, and $\mcP(u)=\mcP$, and $\bv(u)\neq 0$. Then $H^0(X,\T_X)_u$ equals
\begin{align*}
	H^0\left(\PP^1,\mcL^{-u}\right)\otimes V(u)\chi^{-u}\oplus H^0\left(\PP^1,\CO_{\PP^1}\Big(\gamma(u)+\{\infty\}\Big)\right)\langle e(u),ye(u)-\partial_{\bv(u)} \rangle\chi^{-u}.
\end{align*}
Suppose  still that  	$u(\rho)<1$ for all $\rho\in\xray$ but $\mcP(u)\neq\mcP$ or $\bv(u)=0$.
Then $H^0(X,\T_X)_u$ equals
\begin{align*}
		H^0\left(\PP^1,\mcL^{-u}\right)\otimes N\chi^{-u}\oplus H^0\left(\PP^1,\CO_{\PP^1}\Big(\gamma(u)+2\{\infty\}\Big)\right)e(u)\chi^{-u}.
\end{align*}
Suppose instead that $u(\rho)=1$ for some unique $\rho\in\xray$ with $u(\rho')<1$ for all other $\rho'\in\xray$. Then $H^0(X,\T_X)_u$ equals
\begin{align*}
	H^0\Big(\PP^1,\mcL^{-u}\Big)\cdot\partial_\rho\chi^{-u}.
\end{align*}
Finally, if $u$ does not fulfill any of these conditions, $H^0(X,\T_X)_u=0$. 

\end{prop}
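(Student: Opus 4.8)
The plan is to feed the description of $\bF(\T_X)$ from Theorem~\ref{thm:tangent} into the global-sections formula of Proposition~\ref{prop:gs}, and then to transport the resulting cohomology computation to the coarse space $\PP^1$ of $Z$. Writing $(\E,\{\E^\rho(i)\})=\bF(\T_X)$, Proposition~\ref{prop:gs} gives $H^0(X,\T_X)_u=\bigcap_{\rho\in\xray}H^0\big(Z,\E^\rho(u(\rho))\otimes\CO_Z(u)\big)$, and since the coarse moduli map $\psi\colon Z\to\PP^1$ is exact, preserves cohomology, and satisfies $\psi_*\CO_Z(u)=\mcL^{-u}$, every term below can be computed after pushing forward to $\PP^1$, where all bundles split as sums of line bundles.

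First I would clear away the degenerate regimes. By Theorem~\ref{thm:tangent} we have $i_\rho=1$ for every $\rho$, so $u\notin\Box_{\T_X}$ exactly when $u(\rho)\ge2$ for some $\rho$, and then the factor $H^0(Z,\E^\rho(u(\rho))\otimes\CO_Z(u))=H^0(Z,0)$ kills the intersection. If $u(\rho)=1$ for two distinct rays $\rho\neq\rho'$, then $\E^\rho(1)=\iota(\langle\rho\rangle\otimes\CO_Z)$ and $\E^{\rho'}(1)=\iota(\langle\rho'\rangle\otimes\CO_Z)$ intersect only in $0$ inside $\E$ (distinct primitive vectors span distinct lines and $\iota$ is injective), so the intersection again vanishes. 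If $u(\rho)=1$ for a unique $\rho$ while $u(\rho')\le0$ for all other $\rho'$, all factors but the $\rho$-th equal $\E$, and $\E^\rho(1)=\iota(\langle\rho\rangle\otimes\CO_Z)\cong\CO_Z$ is generated by the invariant derivation $\partial_\rho$; pushing $\CO_Z\otimes\CO_Z(u)$ to $\PP^1$ gives the asserted $H^0(\PP^1,\mcL^{-u})\cdot\partial_\rho\chi^{-u}$.

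The substantive case is $u(\rho)\le0$ for all $\rho\in\xray$, where the intersection collapses to $H^0(Z,\E\otimes\CO_Z(u))$. Here I would tensor the Atiyah sequence \eqref{eqn:atiyah} of Proposition~\ref{prop:atiyah} by the line bundle $\CO_Z(u)$ to get $0\to N\otimes\CO_Z(u)\to\E\otimes\CO_Z(u)\to\T_Z\otimes\CO_Z(u)\to0$, and push it forward to $\PP^1$. Using the orbifold canonical divisor of the weighted projective line $Z$ (with fractional part $\tfrac{n_P-1}{n_P}$ at each $P\in\mcP$), a local computation identifies $\psi_*(\T_Z\otimes\CO_Z(u))$ with $\CO_{\PP^1}(\gamma(u)+2\{\infty\})$, while $\psi_*(N\otimes\CO_Z(u))=N\otimes\mcL^{-u}$. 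The long exact cohomology sequence then reads
\begin{equation*}
0\to N\otimes H^0(\PP^1,\mcL^{-u})\to H^0(X,\T_X)_u\to H^0\big(\PP^1,\CO_{\PP^1}(\gamma(u)+2\{\infty\})\big)\xrightarrow{\delta}N\otimes H^1(\PP^1,\mcL^{-u}),
\end{equation*}
where $\delta$ is cup product with the extension class, i.e.\ with the tuple of Atiyah classes $u'\mapsto\eta(\mcL^{u'})$. Choosing a splitting of the short exact sequence identifies $H^0(X,\T_X)_u$ with $N\otimes H^0(\PP^1,\mcL^{-u})\oplus\ker\delta$, so the task reduces to computing $\ker\delta$ and realizing it by explicit vector fields.

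The main work---and the step I expect to be hardest---is this explicit computation of $\ker\delta$. Lifting a local generator of $\CO_{\PP^1}(\gamma(u)+2\{\infty\})$ to an invariant vector field on $W$ forces the orbifold corrections $\tfrac{1}{y-P}\partial_{v_P}$ at the stacky points $P\in\mcP(u)$, together with a correction $-\varepsilon\partial_{\bv(u)}$ absorbing the discrepancy at a remaining point of $\overline\mcP(u)$; this is precisely the field $e(u)$. One then checks on the two standard charts of $\PP^1$ which multiples $f\cdot e(u)$ stay regular and how much of the constant subspace $N\subset\pi_*(\T_W)_0$ survives. When $\mcP(u)\neq\mcP$ or $\bv(u)=0$, $\delta$ annihilates all of $H^0(\CO_{\PP^1}(\gamma(u)+2\{\infty\}))$ and the full $N$-factor survives, giving $N\otimes H^0(\PP^1,\mcL^{-u})\chi^{-u}\oplus H^0(\PP^1,\CO_{\PP^1}(\gamma(u)+2\{\infty\}))\,e(u)\chi^{-u}$; when $\mcP(u)=\mcP$ and $\bv(u)\neq0$, $\delta$ is nonzero on a one-dimensional piece, which simultaneously cuts the surviving image down to $H^0(\PP^1,\CO_{\PP^1}(\gamma(u)+\{\infty\}))\langle e(u),\,y\,e(u)-\partial_{\bv(u)}\rangle$ and forces the constant directions into a complementary hyperplane $V(u)\not\ni\bv(u)$, producing the first formula. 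Throughout, the dictionary of Proposition~\ref{prop:atiyah} ($v\otimes g\mapsto g\partial_v$, $\widetilde\alpha\mapsto\alpha$) turns sections of these $Z$-bundles into genuine weight-$u$ vector fields on $X$; collecting the four regimes gives the statement.
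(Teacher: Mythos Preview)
Your approach is essentially the same as the paper's: both feed Theorem~\ref{thm:tangent} into Proposition~\ref{prop:gs}, push the twisted Atiyah sequence~\eqref{eqn:atiyah} forward to $\PP^1$, and identify the pieces via the explicit derivation $e(u)$. The only difference is one of framing: the paper asserts directly (via local calculations) that $\psi_*(\pi_*(\T_W)_u)$ splits as a sheaf on $\PP^1$, writing down the two summands in each case, whereas you pass to the long exact cohomology sequence and analyze $\ker\delta$; once you write down the explicit lift $e(u)$ these come to the same thing. One small remark: your claim that ``distinct primitive vectors span distinct lines'' is not literally true ($\rho$ and $-\rho$), but in your situation $u(\rho)=u(\rho')=1$ forces $\rho'\neq-\rho$, so the argument goes through.
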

\begin{proof}
	If $\mcP(u)\neq \mcP$ or if $\bv(u)=0$, then the pushforward of the twist by $\CO_Z(u)$ of the exact sequence \eqref{eqn:atiyah} must split, and local calculations show that
\begin{align*}
	\psi_*(\pi_*(\T_W)_u)= \left(\mcL^{-u}\otimes N_\QQ\right)\oplus e(u)\cdot \CO_{\PP^1}\Big(\gamma(u)+2\cdot\{\infty\}
	\Big).
\end{align*}
On the other hand, if $\mcP(u)=\mcP$ and $\bv(u)\neq 0$, then we have
\begin{align*}
	\psi_*(\pi_*(\T_W)_u)= \left(\mcL^{-u}\otimes V(u)\right)\oplus \langle e(u),ye(u)-\partial_{\bv(u)} \rangle \cdot \CO_{\PP^1}\Big(\gamma(u)+\{\infty\}\Big).
\end{align*}
	The claims now follow from Proposition \ref{prop:gs}. \end{proof}

For the general case where $Z$ is not separated, we may cover $Z$ by a finite number of weighted projective lines. The space $H^0(X,\T_X)_u$
may then be computed by using Proposition \ref{prop:gstan} to first calculate sections on each chart, and then by intersecting a finite number of finite-dimensional vector spaces.

\begin{table}
	\begin{center}
\begin{tabular}{cccccc}
\toprule
Name & $(-K_X)^3$ & $b_2$ & $\frac{1}{2}b_3$ & $h^0(\T_X)$ & $h^1(\T_X)$\\
\midrule
Q & $ 54$ & $ 1$ & $ 0$ & $ 10$ & $ 0$\\ 
2.24 & $ 30$ & $ 2$ & $ 0$ & $ 2$ & $ 3$\\ 
2.29 & $ 40$ & $ 2$ & $ 0$ & $ 4$ & $ 0$\\ 
2.30 & $ 46$ & $ 2$ & $ 0$ & $ 7$ & $ 0$\\ 
2.31 & $ 46$ & $ 2$ & $ 0$ & $ 7$ & $ 0$\\ 
2.32 & $ 48$ & $ 2$ & $ 0$ & $ 8$ & $ 0$\\ 
3.8 & $ 24$ & $ 3$ & $ 0$ & $ 2$ & $ 5$\\ 
3.10 & $ 26$ & $ 3$ & $ 0$ & $ 2$ & $ 4$\\ 
3.18 & $ 36$ & $ 3$ & $ 0$ & $ 3$ & $ 0$\\ 
3.19 & $ 38$ & $ 3$ & $ 0$ & $ 4$ & $ 0$\\ 
3.20 & $ 38$ & $ 3$ & $ 0$ & $ 4$ & $ 0$\\ 
3.21 & $ 38$ & $ 3$ & $ 0$ & $ 4$ & $ 0$\\ 
3.22 & $ 40$ & $ 3$ & $ 0$ & $ 5$ & $ 0$\\ 
3.23 & $ 42$ & $ 3$ & $ 0$ & $ 6$ & $ 0$\\ 
3.24 & $ 42$ & $ 3$ & $ 0$ & $ 6$ & $ 0$\\ 
4.4 & $ 32$ & $ 4$ & $ 0$ & $ 2$ & $ 0$\\ 
4.5 & $ 32$ & $ 4$ & $ 0$ & $ 4$ & $ 2$\\ 
4.7 & $ 36$ & $ 4$ & $ 0$ & $ 4$ & $ 0$\\ 
4.8 & $ 38$ & $ 4$ & $ 0$ & $ 5$ & $ 0$\\
\bottomrule\\
\end{tabular}

	\end{center}
	\caption{$h^0(\T_X)$ and $h^1(\T_X)$ for complexity-one Fano threefolds}\label{table:fanotan}
\end{table}

\begin{ex}[Complexity-one Fano Threefolds]\label{ex:fanotan}
A list of smooth non-toric Fano threefolds with $(\CC^*)^2$ action has been given by the second author, see \cite{suess:picbook}.
	We may apply the above results to compute $h^0(\T_X)$ for such varieties, see Table \ref{table:fanotan}. The quantity $h^1(\T_X)$ follows from Riemann-Roch and the Betti numbers of $X$, see e.g. \cite[Proposition 3.1]{ilten:fanodegen}. The names appearing in the leftmost column refer to the terminology of Mori and Mukai \cite{mori:81a}.

\end{ex}

\section{Splitting of equivariant bundles on $\PP^n$}\label{sec:splitting}
Let $m,n\in\NN$ with $1\leq m\leq  n$ and 
consider the $(K^*)^m$-action on $\PP^n$ such that the $i$th coordinate of $(K^*)^m$ acts on the $i$th homogeneous coordinate of $\PP^n$ by multiplication.
\begin{theorem}
	Let $\V$ be a rank $r$ vector  bundle on $\PP^n$, which is $(K^*)^m$-equivariant with respect to the above action. If $r<\min\{n,m+3\}$ then $\V$ splits equivariantly as the sum  of line bundles.
\end{theorem}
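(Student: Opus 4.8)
The plan is to reduce to the toric case by an inductive argument on $m$ and a careful analysis of the compatibility structure produced by the functor $\bF$. Recall that for $\PP^n$ with the big torus $(K^*)^n$ acting (the case $m=n$), Kaneyama's theorem already gives splitting for rank $r<n$; so it suffices to treat $m<n$ and show how a low-rank $(K^*)^m$-equivariant bundle $\V$ can be upgraded to a $(K^*)^{m+1}$-equivariant one (or directly to a toric bundle), provided $r<m+3$. The governing principle is Theorem~\ref{thm:compatible}: since $\PP^n$ is smooth (hence factorial) and $S=\Spec K$, condition \eqref{dagger} holds, so $\V$ is determined by an $X$-compatible $(Z,\xray)$-filtration $(\E,\{\E^\rho(i)\})$ on the quotient stack $Z=[W/(K^*)^m]$, where $W$ is the DM-locus. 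The bundle $\V$ splits equivariantly as a sum of line bundles if and only if $\E$ splits as a sum of line bundles compatibly with all the filtrations $\E^\rho$ — equivalently, if on all of $Z$ there is a single frame of line subbundles such that every $\E^\rho(i)$ is a partial direct sum.

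First I would identify $Z$ explicitly: with the $(K^*)^m$-action on $\PP^n$ by scaling the first $m$ homogeneous coordinates, the DM-locus $W$ is the complement of the coordinate subspace $\{x_0=\dots=x_{m-1}=0\}$, and $Z=[W/(K^*)^m]$ is a (stacky/weighted) $\PP^{n-m}$-bundle over a product of weighted points — concretely $Z$ is closely related to $\PP^{n-m}$ with some extra stacky structure, and in particular $\E$ is a rank-$r$ bundle on a variety whose Picard rank and cohomology are controlled. The set $\xray$ consists of the $n+1$ rays: the $m$ "stacky" rays $e_0,\dots,e_{m-1}$ coming from the coordinate divisors $\{x_i=0\}$ outside $W$, and the remaining $n-m+1$ rays giving honest divisors on $Z$. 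The $X$-compatibility condition forces, over the chart of $Z$ corresponding to each maximal cone of the fan of $\PP^n$, a splitting of $\E$ into line bundles realizing all the filtrations indexed by the rays in that cone; this is exactly a Klyachko-type datum but on $Z$ rather than on a point.

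The heart of the argument is then a splitting criterion for the bundle $\E$ with its filtrations on $Z$, using the hypothesis $r<\min\{n,m+3\}$. Here I would run a combinatorial/linear-algebra argument in the spirit of Kaneyama's original proof: the $m$ stacky filtrations $\E^{e_0},\dots,\E^{e_{m-1}}$ already pairwise refine to give $\E$ a decomposition into at most — by a pigeonhole count using $r\le m+2$ — a manageable number of pieces, and the constraint $r<n$ provides enough extra rays in each maximal cone to rigidify these local decompositions into a global one. More precisely, I expect to show that compatibility across overlapping charts forces the local line-bundle frames to agree (up to reordering) on all of $Z$, because any rank-$r$ bundle on $Z$ that is "locally split in $r$ lines in too many directions" must be globally split when $r$ is small relative to the number of compatibility directions; the rank bound $r<m+3$ is what lets the $m$ torus-invariant directions plus the structure of $Z$ do the job that all $n$ directions do in Kaneyama. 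Once $\E$ splits compatibly, Theorem~\ref{thm:compatible} returns an equivariant splitting of $\V$.

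The main obstacle I anticipate is precisely this last step: proving that local splittings of $\E$ (guaranteed chart-by-chart by $X$-compatibility) glue to a global splitting. On a point (Klyachko's setting) this is the statement that a family of filtered vector spaces that is "split in every cone" of a smooth fan with enough rays is globally split — Kaneyama's lemma. Over the non-trivial base $Z$ one must additionally control the extension data (the "off-diagonal" $\Ext^1$ between the would-be line-bundle summands) and show it vanishes; I would handle this by a cohomology-vanishing input on $Z$ (or its coarse space, a weighted projective space, where line-bundle cohomology is computable), combined with the rank estimate to bound the number of potentially obstructing pairs. Packaging the induction on $m$ cleanly — so that "upgrade from $(K^*)^m$ to $(K^*)^{m+1}$" is legitimate and terminates at Kaneyama's case $m=n$ — is the other bookkeeping hazard, but it is routine once the gluing lemma is in hand.
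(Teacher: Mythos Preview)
Your proposal has a genuine gap, and the paper takes a substantially different route.

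First, two setup corrections. The DM-locus $W$ for the full $(K^*)^m$-action is not the complement of the linear subspace $\{x_0=\cdots=x_{m-1}=0\}$; it is (up to a piece of codimension $\ge 2$) the complement of the $m$ coordinate hyperplanes $\{x_i=0\}$, $0\le i\le m-1$, and hence $\xray$ has exactly $m$ elements, not $n+1$. With this fixed, $Z\cong\PP^{n-m}$ and $\E$ carries $m$ filtrations.

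More seriously, the inductive scheme you outline does not close. ``Upgrading'' a $(K^*)^m$-equivariant bundle to a $(K^*)^{m+1}$-equivariant one is not a well-posed step: equivariance for a subtorus does not in general extend, so there is nothing to upgrade to without already knowing the bundle splits. The alternative you sketch---gluing the chart-by-chart splittings of $\E$ guaranteed by $X$-compatibility into a single global frame---is exactly the heart of the matter, and you rightly flag it as the main obstacle; but the proposal offers no mechanism beyond a pigeonhole heuristic and an unspecified $\Ext^1$-vanishing on $Z$. As written this is a plan, not a proof.

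The paper's argument is organized differently. It inducts on the \emph{rank} of $\V$, and at each step takes the quotient by only the \emph{single} $K^*$ given by the $m$-th factor. Then $Z=\PP^{n-1}$ and $\xray=\{H\}$ is a singleton, so $\bF(\V)$ is a rank-$r$ bundle $\E$ on $\PP^{n-1}$ with \emph{one} filtration $\E(i)$. The key geometric input is that the fixed point $P_0=[0:\cdots:0:1:0:\cdots:0]$ lies in the closure of every orbit of $W$, so $Z_{P_0}=Z$, and the $X$-compatibility condition at the orbit $P_0$ (for which $\xray_{P_0}=\emptyset$) forces $\E$ itself to split globally as a sum of line bundles. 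If the filtration has a single jump this already finishes. Otherwise let $\F=\E(\lambda)$ be the last nonzero step and form $0\to\F\to\E\to\Q\to 0$. Both $\F$ and $\Q$ have smaller rank and carry the residual $(K^*)^{m-1}$-action on $\PP^{n-1}$, so the inductive hypothesis (or, for $m=1$, the global splitting of $\E$ together with the Evans--Griffith criterion) shows they split. Since $n-1\ge 2$, split bundles on $\PP^{n-1}$ have $\Ext^1(\Q,\F)=0$; the sequence splits, and the induced filtrations on $\F$ and $\Q$ reassemble into an equivariant splitting of $\V$.

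In short, the idea you are missing is to descend by only one $K^*$ at a time: this keeps the quotient as concrete as possible ($\PP^{n-1}$), reduces to a single filtration, and lets the fixed point $P_0$ supply the global splitting of $\E$ for free via compatibility---after which induction on rank, not on $m$, does the rest.
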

\begin{proof}
	If $n=2$, the statement is trivial. We proceed by induction on $r$, assuming that $n>2$. Suppose the statement is true for all $r\leq k$, and let $\V$ be  a 	$(K^*)^m$-equivariant rank $k+1$ bundle on $\PP_K^n$, where $k+1<\min\{n,m+3\}$.
	Let $P_0$ be the closed point of $\PP_K^n$ given by the vanishing of all but the $m$th coordinate, and let $H$ be the divisor on which the $m$th coordinate vanishes. We restrict $(K^*)^m$ to the subtorus $K^*$ corresponding to the $m$th factor. 
Of course, the bundle $\V$ is equivariant with respect to this action.	
	This	
	$K^*$ acts freely on $W=\PP^n\setminus(H\cup P_0)$; the quotient is $\PP^{n-1}$ with the map just given by projection from $P_0$ to the hyperplane $H$. Thus, $\xray$ consists of the single divisor $H$, and rank $k+1$ equivariant vector bundles on $\PP^n$ are given by a filtration $\{\E(i)\}_{i\in\ZZ}$ of a  rank $k+1$ vector bundle $\E$ on $\PP^{n-1}$ satisfying the compatibility conditions of \ref{def:compatible}.

	First, we suppose that there exists $\lambda\in\ZZ$ such that $\E=\E(\lambda)$ and $\E(\lambda+1)=0$. Since the point $P_0$ is in the closure of every orbit of $W$, the compatibility condition implies that $\E$ splits as a sum of invertible sheaves $\bigoplus_{j=1}^{k+1}\mcL_j$. Then the filtrations $\{\mcL_j(i)\}_{i\in\ZZ}$ defined as 
	$$\mcL_j(i)=\begin{cases} 
		\mcL_j & i\leq \lambda\\
		0& i>\lambda
	\end{cases}
$$
for $j=1\ldots k+1$, correspond to $K^*$-equivariant line bundles on $\PP^n$ whose direct sum is the original bundle. Since a vector bundle splits $T$-equivariantly if and only if it splits (see the proof of \cite[1.2.3]{klyachko:89a}), $\V$ must split equivariantly.

Suppose instead that there exists $\lambda\in\ZZ$ such that $\F=\E(\lambda)$ is neither $0$ nor $\E$ and $\E(\lambda+1)=0$.
Consider the exact sequence
\begin{equation}\label{eqn:split}
\begin{CD}
0@>>>\F@>>>\E@>>>\Q@>>>0,
\end{CD}
\end{equation}
where $\Q$ is the quotient $\E/\F$. Note that $\Q$ is locally free, since $\E(i)$ is a filtration by sub-bundles.

We claim that $\F$ and $\Q$ split as sums of line bundles.
If $m>1$, a residual $(K^*)^{m-1}$ acts on $\PP_K^{n-1}$ and $\E$ and $\F$ are equivariant with respect to this action, and hence $\Q$ as well. It follows by the induction hypothesis that $\F$ and $\Q$ split, since their rank is at most $k$.
If instead $m=1$, then the rank of $\V$ is at most $3$. Hence, the rank of $\F$ must be $1$ or $2$, and by replacing $\V$ with $\V^*$, we can assume that $\F$ has rank $1$ and $\rk \Q\leq 2$. As above, the compatibility condition at $P_0$ implies that $\E$ splits. Due to the splitting of $\E$ and $\F$ and the fact that $n>2$, the long exact sequence of cohomology coming from \eqref{eqn:split} gives that $H^1(\PP^{n-1},\Q(t))=0$ for all $t\in\ZZ$. Hence, $\Q$ must also split by the Evans-Griffith criterion \cite[Theorem 2.4]{evans:81a}.

Now, since $n>2$ and $\F$ and $\Q$ are split, we have that 
$$
\Ext^1(\Q,\F)=H^1(\PP^{n-1},\Q^{\vee}\otimes\F)=0
$$
so the above sequence splits and we can view $\Q$ as a direct summand of $\E$. 
The two filtrations
\begin{align*}
\F(i)=\F\cap \E(i) \qquad \Q(i)=\Q\cap \E(i)
\end{align*}
correspond to equivariant bundles of rank at most $k$ on $\PP^n$ whose direct sum is the original bundle. The theorem now follows by induction.
\end{proof}

\begin{rem}
	The hypothesis above that $\rk \V <n$ is necessary, since the tangent bundle does not split. We do not know if the hypothesis $\rk \V<m+3$ is necessary. If one assumes Hartshorne's conjecture (rank two bundles on $\PP^n$ split for $n\geq 7$), then the above proof may be adapted to show that this hypothesis can be relaxed to $\rk \V<m+5$ so long as $n\geq m+7$. 
	
The following question precisely describes the obstruction to omitting the hypothesis on $m$.
\begin{question}
	Is there an exact sequence of vector bundles on $\PP^n$ for some $n\geq 2$ as in Equation \eqref{eqn:split} such that
	\begin{enumerate}
		\item $\F$ and $\Q$ do not split as  sums of line bundles;
		\item $\rk \E\leq n$ and $\E$ splits as a sum of line bundles?
	\end{enumerate}
\end{question}
If the answer to this question is negative, then the bundles appearing in \eqref{eqn:split} in the proof of the theorem must split, regardless of assumptions on $m$. Conversely, if there is such a sequence, one may use it to construct a $K^*$-equivariant bundle on $\PP^{n+1}$ of rank  $\rk \E$ which does not split as a sum of line bundles. 
\end{rem}

\section{Toric Downgrades}\label{sec:downgrades}
Consider a toric variety $X$ with embedded torus $T'$ and a $T'$-equivariant vector bundle $\V$ on $X$. This may be described via a collection of filtrations of a vector space, see Section~\ref{sec:toric-situation}.
If $T \subset T'$ is a subtorus, then $\V$ is also $T$-equivariant. The aim of this section is to translate the description of $\V$ as a $T'$-equivariant bundle into a description of $\V$ as a $T$-equivariant bundle. In other words, we need to give a description of $Z=[W/T]$, $\xray$ and the $(Z,\xray)$-filtration corresponding to $\V$ in terms of (stacky) toric geometry.

\subsection{Toric Stacks}
The first thing we need is a description of non-separated tame toric stacks with finite stabilizers, since they occur as quotients  in our construction. In principle this is done in \cite{borisov2005orbifold}, but we need to enrich their concept of \emph{stacky fans} to cover  the non-smooth and non-separated cases.
The non-smooth case is treated in \cite[Section~4]{tyomkin12}. We will use essentially the same construction here, but  modify it slightly to include non-separated stacks. Before dealing with stacks, we will first discuss toric prevarieties, see also \cite{acampo:prevars} for non-separated toric constructions.

\begin{definition}
\label{sec:def-prefan}
  A \emph{prefan} is a pair $(\Sigma,\mu_\Sigma)$ consisting of a finite poset $(\Sigma,\leq)$ with a unique minimal element $0$ and a morphism $\mu_\Sigma$ of posets associating to each element of $\Sigma$ a cone in $N_\QQ$, such that the following conditions are fulfilled:
  \begin{enumerate}
  \item $\mu_\Sigma(0)=0$,
  \item for $\Sigma_{\leq \sigma}:=\{\tau \mid \tau \leq \sigma\}$, the restriction of $\mu_\Sigma$ is an isomorphism of posets between $\Sigma_{\leq \sigma}$ and the set of faces of $\mu_\Sigma(\sigma)$.
  \end{enumerate} 

We define $\sigma \cap \sigma'$ to be the subposet $\Sigma_{\leq \sigma} \cap \Sigma_{\leq {\sigma'}}$. A \emph{ray} of $\Sigma$ is an element $\rho \in \Sigma$ such that $\mu_\Sigma(\rho)$ is a cone of dimension one. $\Sigma^{(1)}$ denotes the set of all rays.
\end{definition}

 One may view our prefans as certain abstract polyhedral complexes with a geometrical embedding for every single polyhedron. In particular, a cone may occur in several instances in our complexes and all cones are glued only along faces which are subsets of their common intersection. The drawback of this kind of object is that it is not as easy to draw as a usual fan. To overcome this problem, one can draw several pictures of fans, such that the image of every maximal element occurs in at least one of them, and in the pictures, intersection respects the partial ordering. However, one must also identify faces occurring in different pictures. This idea is formalized in \cite{acampo:prevars} under the name \emph{system of fans}.
 
\begin{construction}
	For any cone $C$ in $N_\QQ$, let $\tv(C,N)=\tv(C)$ denote the affine toric variety corresponding to $C$, see e.g. \cite{fulton:93a}.
  For two elements $\sigma,\sigma' \in \Sigma$  the set $\mu_\Sigma(\sigma \cap \sigma')$ defines a subfan of $\mu_\Sigma(\sigma)$ and $\mu_\Sigma(\sigma')$, respectively. Hence, we may identify the corresponding open subvarieties of $\tv({\mu_\Sigma(\sigma))}$
and $\tv({\mu_\Sigma(\sigma')})$. This leads to an equivalence relation $\sim$ satisfying the necessary cocycle condition and we obtain a toric prevariety
\[\tv(\Sigma):=\left(\coprod_{\sigma \in \Sigma} \tv(\mu_\Sigma(\sigma))\right) / \sim.\]
\end{construction}

\begin{rem}
  Every fan $\Delta$ gives rise to a prefan $\Delta^p$
by considering the poset $\Delta$ and the identity map.
On the other hand, if $\Sigma$ is a prefan such that $\mu_\Sigma$ is an isomorphism of posets and $\mu_\Sigma(\Sigma)$ is a fan, then by definition  $\tv(\Sigma)=\tv({\mu_\Sigma(\Sigma)})$. Hence, by abuse of notation we will treat every fan as a special case of a prefan.
\end{rem}

\begin{ex}
\label{sec:exmp-double-point}
   We consider the prefan given by $\Sigma=\{0,1,1',-1\}$ with partial order $0\leq 1,1',-1$, and the mapping $\mu$ with $\mu(0)=0$, $\mu(1)=\mu(1')=\QQ_{\geq 0}$, $\mu(-1)=\QQ_{\leq 0}$. Our construction gives a projective line with a doubled origin. The two instances of this point corresponds to $1$ and $1'$ which are mapped to the same cone.

We may sketch this situation as in Figure~\ref{fig:doublepoint}(a). Alternatively, we can give the two fans in \ref{fig:doublepoint}(b). From the labeling of the cones we can read off the $\mu$. The partial order is given from the face relation in both pictures.

   \begin{figure}[htbp]
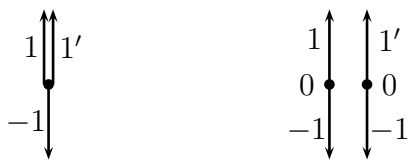

     \centering
     \subfigure[Sketch of a prefan]{\prefanb}
     \subfigure[A system of a fans]{\systemfanb}
    \caption{Projective line with a doubled point}
    \label{fig:doublepoint}
   \end{figure}
 \end{ex}

\begin{definition}[cf. {\cite[Definition 4.1]{tyomkin12}} ]
  A \emph{stacky prefan} is a pair $\boldsymbol\Sigma=(\Sigma,\Sigma^0)$ consisting of a  prefan $\Sigma$ in $N_\QQ$ and a mapping $\Sigma^0$ associating to every $\sigma \in \Sigma$ a sublattice $N^0_\sigma:=\Sigma^0(\sigma) \subset N$ such that
  \begin{enumerate}
  \item $N_\sigma^0$ is a sublattice of finite index in $N_\sigma:=\QQ\cdot \mu_\Sigma(\sigma) \cap N$, 
  \item $\Sigma^0$ respects the order of $\Sigma$, i.e $N_\tau^0 = N_\tau \cap N_\sigma^0$ for $\tau \leq \sigma$.
  \end{enumerate}
\end{definition}

We will now show how to associate a tame toric stack $X(\boldsymbol\Sigma)$ with finite and generically trivial stabilizers  to any stacky prefan $\Sigma$. Our construction is identical to that of \cite[Section 4.1]{tyomkin12}, except that we glue things in a more complicated fashion.

\begin{construction}
\label{sec:const-toric-stack-local}
Locally we may construct the stack corresponding to $\boldsymbol\Sigma$ as the quotient of an affine toric variety by a finite subgroup of the torus. Let us first assume that $\mu_\Sigma(\sigma)$ is full-dimensional.
We have an  exact sequence
\[\xymatrix@1{0  \ar[r] & N_\sigma^0  \ar[r]^\iota & N  \ar[r] & G_\sigma  \ar[r] & 0}\]
where $G_\sigma$ is some finite abelian group.
 By dualizing the embedding we get
\[\xymatrix@1{0  \ar[r] & M  \ar[r] & M_\sigma^0  \ar[r] & G_\sigma  \ar[r] & 0},\]
which leads to an embedding $G_\sigma \cong \Spec K[G_\sigma] \hookrightarrow \Spec K[M^0_\sigma]=T_\sigma^0$. 
We then define $U_\sigma$ to be the quotient stack $[\widetilde{U}_\sigma/G_{\sigma}]$, where $\widetilde{U}_\sigma=\tv(\mu_\Sigma(\sigma),N^0_\sigma)$.

If $\mu_\Sigma(\sigma)$ is not of maximal dimension, then we may replace $N_\sigma^0$ by some sublattice $N' \subset N$ of finite index, such that $N' \cap N_\sigma
= N^0_\sigma$. It is not hard to see that the corresponding quotient stack does not depend on the choice of $N'$ (see e.g. \cite[Theorem B.3]{geraschenko11}).
In particular, for $\sigma \geq \tau \leq \sigma'$ the corresponding open substacks $[\tv(\tau,N^0_\sigma)/G_{\sigma}] \subset U_\sigma$ and $[\tv(\tau,N^0_{\sigma'})/G_{{\sigma'}}] \subset U_{\sigma'}$ can be identified. By gluing along these common open substacks we obtain a global object $\tv(\boldsymbol\Sigma)$.
By construction, $\tv(\boldsymbol\Sigma)$ is a tame stack with finite and generically trivial stabilizers. Moreover, the face $0 \leq \sigma$ gives an embedding $T \hookrightarrow [\tv(\mu_\Sigma(\sigma),N^0_\sigma)/G_{\sigma}]$ with $T = T^0_\sigma/G_\sigma$, and the $T^0_\sigma$-action on $\tv(\mu_\Sigma(\sigma))$ descends to a $T$-action on the quotient. These actions glue and we obtain a $T$-action on $\tv(\boldsymbol\Sigma)$. 

It is not difficult to check that the coarse moduli space of $\tv(\boldsymbol\Sigma)$ is the toric pre-variety $\tv(\Sigma)$; we leave the details to the reader.
\end{construction}

\begin{rem}
Suppose that the stacky fan $\boldsymbol\Sigma$ satisfies the condition that for any $\sigma_1,\sigma_2\in \Sigma$, the intersection $\sigma_1\cap\sigma_2$ has a unique maximal element $\tau$. Then $\tv(\boldsymbol\Sigma)$ is a global quotient of a toric variety by a sub-quasitorus. Its fan is obtained by successively embedding copies of the cones $\mu_\Sigma(\sigma_1)$ and $\mu_\Sigma(\sigma_2)$ in $N^0_\sigma \times N^0_{\sigma'}$, and dividing out by the sublattice $N^0_\tau \times (-N^0_\tau)$; we leave it to the reader to fill in the details.
\end{rem}

\subsection{Vector bundles on toric stacks}\label{sec:stackvb}
The description of equivariant vector bundles on our toric stacks is completely parallel to the case of varieties and fans. Indeed, we may construct our vector bundles locally. Here, giving a $T$-equivariant vector bundle on a local chart $U_\sigma=[\widetilde{U}_\sigma/G_{\sigma}]$  is the same as giving a $T_{N^0_\sigma}$-equivariant vector bundle on $\widetilde{U}_\sigma=\tv(\mu_\Sigma(\sigma))$.  Hence, a vector bundle on the toric stack $\tv(\boldsymbol\Sigma)$ is given by a vector space $E$ and filtration $E^\rho(i)$ for every ray $\rho \in \Sigma^{(1)}$ such that Klyachko's compatibility condition \cite[Theorem 2.2.1]{klyachko:89a} is fulfilled with respect to the lattice $N^0_\sigma$. More precisely, for every $\sigma \in \Sigma$, there must exist a splitting $E=\bigoplus_{u \in M^0_\sigma} E^{[\sigma]}(u)$ such that $E^\rho(i) = \bigoplus_{\langle u, \rho^0 \rangle \geq i} E^{[\sigma]}(u)$ if $\rho$ is a ray of $\sigma$.

We obtain an equivalence of categories between families of compatible filtrations $E^\rho(i)$ indexed by $\Sigma^{(1)}$ and vector bundles on $\tv(\boldsymbol\Sigma)$.

\subsection{Downgrades} 
We now turn to the problem of downgrading the description of toric vector bundles. Consider the toric variety $X=\tv(\Sigma)$, where $\Sigma$ is a fan in $N_\QQ'$.
A torus inclusion $T \hookrightarrow T'$ corresponds to an exact sequence of lattices
\begin{equation}
  \label{eq:downgrade-sequence}
  \xymatrix@1{0  \ar[r] & N  \ar[r]^F & N'  \ar[r]^\mu & \overline N  \ar[r] & 0}
\end{equation}
The subtorus $T\subset T'$ also acts on $X$.
It is easy to see that the DM-locus  $W \subset X$ with respect to  $T$  corresponds to the subfan $\Sigma' \subset \Sigma$ consisting of those cones which preserve their dimension under $\mu_\QQ$. 

Furthermore, $\xray$ is  $\Sigma^{(1)}\cap N_\QQ$, or more precisely, $\xray$ consists of the primitive generators of those rays which are contracted by $\mu$.

\begin{lemma}
\label{sec:lem-stacky-quot}
  Consider the stacky prefan $\boldsymbol\Sigma$  given by the  prefan
$(\Sigma',\mu)$
together with the map $\sigma \mapsto \mu(N_\sigma')$. Then 
$$\tv(\boldsymbol\Sigma)=[W/T].$$
\end{lemma}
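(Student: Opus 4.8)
The plan is to verify the identification $\tv(\boldsymbol\Sigma)=[W/T]$ chart by chart, using the fact that both sides are built by gluing local quotients of affine toric varieties. Since $W$ is the $T$-invariant open in $X=\tv(\Sigma)$ on which $T$ acts with finite stabilizers, and since — as noted just before the lemma — $W$ is the union of the orbits $\co(\sigma)$ for $\sigma$ in the subfan $\Sigma'$ of cones whose dimension is preserved under $\mu_\QQ$, we have $W = \bigcup_{\sigma\in\Sigma'} \tv(\sigma,N')$ glued along faces exactly as encoded by the prefan $(\Sigma',\mu)$. So the combinatorial bookkeeping of how the charts glue is the same on both sides, and it suffices to produce, for each $\sigma\in\Sigma'$, a $T$-equivariant isomorphism between the chart $U_\sigma$ of $\tv(\boldsymbol\Sigma)$ and the quotient stack $[\tv(\sigma,N')/T]$, compatibly with restriction to faces.

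\textbf{Local computation.} Fix $\sigma\in\Sigma'$. Since $\mu_\QQ$ preserves the dimension of $\sigma$, the restriction $\mu|_{N'_\sigma}\colon N'_\sigma \to \overline N$ is injective, so $\mu(N'_\sigma)\subset\overline N$ is a finite-index sublattice of $\overline N_{\mu(\sigma)}$ and $\boldsymbol\Sigma$'s lattice assignment $N^0_{\mu(\sigma)} := \mu(N'_\sigma)$ makes sense. The defining exact sequence for the chart $U_{\mu(\sigma)}$ of $\tv(\boldsymbol\Sigma)$ is $0\to \mu(N'_\sigma) \to \overline N \to G \to 0$ for the finite group $G=\overline N/\mu(N'_\sigma)$, and $U_{\mu(\sigma)} = [\tv(\mu(\sigma), \mu(N'_\sigma))/G]$. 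On the other side, I would compute $[\tv(\sigma,N')/T]$ directly: the action of $T=T_N$ on $\tv(\sigma,N')$ is free precisely on $W$ (finite stabilizers and generically trivial in fact), and Sequence~\eqref{eq:downgrade-sequence} splits after tensoring with $\QQ$, so choosing a splitting realizes $\tv(\sigma,N')$ $T$-equivariantly as a product-up-to-finite-cover of a $T$-torsor with the toric variety $\tv(\mu(\sigma),\overline N)$; taking the quotient by $T$ recovers exactly $[\tv(\mu(\sigma),\overline N)/G]$ with $G$ as above. The cleanest way to phrase this without choosing splittings is: both stacks represent the same quotient functor because $\tv(\sigma,N')$ is a $\mu(N'_\sigma)$-torsor over $\tv(\mu(\sigma),\mu(N'_\sigma))$ (toric description of the quotient map induced by a surjection of lattices) and then one divides by the remaining finite $G$. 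I would also check that the residual $T = T^0_{\mu(\sigma)}/G$-action matches the $T$-action on $[\tv(\sigma,N')/T]$ inherited from $T'$ acting on $X$ — this is immediate from functoriality of the torus quotient applied to Sequence~\eqref{eq:downgrade-sequence}.

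\textbf{Gluing.} Having the local isomorphisms $U_{\mu(\sigma)} \cong [\tv(\sigma,N')/T]$, I would check they are compatible on overlaps. For $\tau \le \sigma$ in $\Sigma'$, the chart $\tv(\tau,N')$ is the toric open subvariety of $\tv(\sigma,N')$ corresponding to the face $\tau$, and condition (2) in the definition of a stacky prefan ($N^0_\tau = N_\tau \cap N^0_\sigma$, here $\mu(N'_\tau) = \mu(N'_\sigma)\cap \overline N_{\mu(\tau)}$, which holds because $\mu|_{N'_\sigma}$ is injective) guarantees that the local identifications on $U_{\mu(\tau)}$ coming from $\sigma$ and from any other $\sigma'\ge\tau$ agree; this is exactly the compatibility Construction~\ref{sec:const-toric-stack-local} uses to glue. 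Since $W$ is glued from the $\tv(\sigma,N')$ along the same face data, the quotient $[W/T]$ is glued from the $[\tv(\sigma,N')/T]$ in the same pattern, and the local isomorphisms assemble to a global isomorphism $\tv(\boldsymbol\Sigma) = [W/T]$ of $T$-stacks.

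\textbf{Main obstacle.} The routine part is the toric dictionary; the delicate point I expect to spend the most care on is handling cones $\sigma$ that are \emph{not} full-dimensional — in that case $\mu(N'_\sigma)$ is not finite index in $\overline N$, and Construction~\ref{sec:const-toric-stack-local} tells us to pad it to a finite-index sublattice $N' \subset \overline N$ with $N'\cap \overline N_{\mu(\sigma)} = \mu(N'_\sigma)$. I must confirm that the quotient $[\tv(\sigma,N')/T]$ is genuinely independent of this padding (invoking the independence result cited in the construction, e.g. \cite[Theorem B.3]{geraschenko11}), and that the choice is consistent across faces — a non-full-dimensional $\tau\le\sigma$ must receive a compatible padding. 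The cleanest route is to first treat full-dimensional cones where everything is canonical, and then reduce the general case to that one exactly as Construction~\ref{sec:const-toric-stack-local} does, noting that non-separatedness of $\Sigma'$ introduces no new difficulty since distinct maximal cones are simply glued along the subfan $\sigma\cap\sigma'$ on both sides.
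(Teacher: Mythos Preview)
Your approach is essentially the paper's: identify $[\tv(\sigma,N')/T]$ with the stacky chart $U_{\mu(\sigma)}=[\tv(\mu(\sigma),\mu(N'_\sigma))/G_\sigma]$ for each $\sigma\in\Sigma'$, then glue. The paper's local step is more economical than your hand computation via splittings and torsors: it simply records the commuting square of lattice maps $(\mu|_{N'_\sigma})^{-1}\colon\mu(N'_\sigma)\hookrightarrow N'$ over $\iota\colon\mu(N'_\sigma)\hookrightarrow\overline N$ and invokes \cite[Theorem~B.3]{geraschenko11} directly (the same reference you cite only for padding independence) to obtain the chartwise isomorphism in one stroke, which also handles the non-full-dimensional cones uniformly.
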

\begin{proof}
  Note that since by assumption $\sigma$ is not contracted by $\mu$, the restriction $\mu|_{N_\sigma}$ is an isomorphism onto its image. Locally, we have maps of cones fitting into a commutative diagram
  \begin{equation}
    \label{eq:two-quotients}
    \xymatrixcolsep{5pc}\xymatrix{
  N' \ar[d]^\mu \ar@^{<-)}[r]^{(\mu|_{N_\sigma})^{-1}}  \POS p+(0,6) *+{\sigma}="s"   & \mu(N_\sigma) \ar@^{(->}[d]^\iota \POS p+(0,6) *+{\mu(\sigma)} \ar@{->} "s"\\
  \overline N \ar@{=}[r] & \overline N     
}
  \end{equation}
By \cite[Theorem B.3]{geraschenko11} this induces an isomorphism 
$$[\tv(\sigma)/T]\cong \left[\tv\big(\mu(\sigma),\mu(N'_\sigma)\big)/G_{\sigma}\right].$$ Moreover, these local isomorphisms glue and we obtain the statement of the lemma.
\end{proof}

Now, consider a $T'$-equivariant (i.e. toric) vector bundle $\V$ on $X$. This corresponds to filtrations $E^\rho(i)$ of a vector space $E$, where $\rho$ ranges over all $\rho\in\Sigma^{(1)}$.
Restricting our torus action, we may instead consider $\V$ as a $T$-equivariant bundle. This corresponds to some compatible $(Z,\xray)$-filtration $(\E,\{\E^\rho(i)\})$, where $Z=\tv(\boldsymbol\Sigma)$.
Since the quotient torus $\overline T=T'/T$ acts on $(\E,\{\E^\rho(i)\})$, we may describe this as in Section \ref{sec:stackvb}.

\begin{prop}
	The $\overline T$-equivariant vector bundle $\E$ on $\tv(\boldsymbol\Sigma)$ corresponds to the filtrations $E^\rho(i)$ for every ray in $\Sigma'$. Moreover, for $\gamma \in \xray \subset N$ and $j\in\ZZ$, the bundle $\E^{\gamma}(j)$ corresponds to the filtrations $E^{\rho}(i)\cap E^{\gamma}(j)$.
\end{prop}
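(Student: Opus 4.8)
The plan is to reduce the statement to a purely local computation on a chart of the stacky prefan $\boldsymbol\Sigma$, and there to trace through the definitions of $\bF$ (Section~\ref{sec:gen}) together with the local description of vector bundles on toric stacks from Section~\ref{sec:stackvb}. Concretely, fix a cone $\sigma\in\Sigma'$ and work over the chart $U_\sigma=[\tv(\mu(\sigma),\mu(N'_\sigma))/G_\sigma]$, which by Lemma~\ref{sec:lem-stacky-quot} is the corresponding chart of $Z=[W/T]$. Pulling back along $\tv(\sigma)\to U_\sigma$, a $T'$-equivariant splitting $E=\bigoplus_{u\in M'} E(u)$ adapted to $\sigma$ (Klyachko's condition) restricts to the $T$-equivariant bundle; the $\overline T=T'/T$-weights are recorded by the image of $u$ under $M'\to \overline M$, while the $T$-invariant part (weight $0$ in $M$) is what survives in $\E=\pi_*(\V|_W)_0$. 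So the first step is to make explicit the identification $\pi_*(\V|_W)_0|_{U_\sigma}$ with the vector space $E$ equipped with its residual $\overline T$-grading, and to check directly from the definitions that under this identification the filtration $E^\rho(i)$ attached to a ray $\rho$ of $\sigma$ — which lies in $\Sigma'$ since $\sigma$ is uncontracted — is exactly the $\overline T$-equivariant filtration produced by the recipe of Section~\ref{sec:stackvb}.

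The second step handles the filtrations indexed by $\xray$. Here $\gamma\in\xray$ is the primitive generator of a contracted ray of $\Sigma$. By Remark~\ref{sec:rem-alternativ-V-rho}, $\E^\gamma(j)=\pi_*(\V^\gamma(j))_0$, where $\V^\gamma(j)$ consists of sections of $\V|_W$ vanishing to order $\geq j$ along $D_\gamma$. Working again in a local chart where $\V$ splits $T'$-equivariantly as $\bigoplus_u \mcL_u$ with $\mcL_u$ generated by a semi-invariant section of weight $u$, Lemma~\ref{lemma:tdiv} says the vanishing order of such a generator along $D_\gamma$ is governed by $u(\gamma)$ (recall $\gamma\in N$, so $u(\gamma)$ makes sense even though $\gamma$ spans no ray of $\Sigma'$). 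Thus the degree-$u$ part of $\V^\gamma(j)$ either vanishes or equals that of $\V$, according to whether $u(\gamma)<j$ or $u(\gamma)\geq j$; taking $T$-invariant pushforward then shows that $\E^\gamma(j)$ is, inside $\E\cong E$, precisely the sum of those summands $E(u)$ with $u(\gamma)\geq j$. That sum is $E^\gamma(j)$ in Klyachko's original indexing for the big torus $T'$. Intersecting with the chart-splitting for $\sigma$ and comparing with the $\overline T$-equivariant structure yields that $\E^\gamma(j)$, as a $\overline T$-equivariant subbundle of $\E$, corresponds to the filtrations $E^\rho(i)\cap E^\gamma(j)$, exactly as claimed.

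The third step is gluing: the local identifications over the charts $U_\sigma$ are compatible because they are all induced from the single vector space $E$ and its filtrations by the big torus $T'$, so the cocycle conditions are automatic, and the glued object is the claimed $\overline T$-equivariant $(Z,\xray)$-filtration. One should also remark that $X$ satisfies condition \eqref{dagger} in the toric situation (as noted at the start of Section~\ref{sec:compatible}), so $\bF$ is an equivalence and it suffices to identify the image.

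I expect the main obstacle to be purely bookkeeping rather than conceptual: keeping straight the three lattices $M$, $\overline M$, $M'$ and the two quotient operations (dividing by $T$ to get $\E$, and recording the residual $\overline T$-action on $\E$), and verifying that the ``evaluation at $u(\gamma)$'' indexing coming from Remark~\ref{sec:rem-rank-one-filt} and Lemma~\ref{lemma:tdiv} matches the $j$-indexing in the statement without an off-by-one or sign error. A secondary subtlety is that a contracted ray $\gamma$ need not be a ray of any uncontracted cone $\sigma$, so one must be careful that the intersection $E^\rho(i)\cap E^\gamma(j)$ is interpreted with $\rho$ ranging over rays of $\Sigma'$ and $\gamma$ over $\xray$ independently; checking that these two families of subspaces are simultaneously diagonalized in a common basis on each chart (which is exactly Klyachko's compatibility for the ambient fan $\Sigma$, restricted appropriately) is the one place where a genuine (though standard) argument is needed.
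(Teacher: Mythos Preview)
Your proposal is correct and follows essentially the same route as the paper: work locally on a chart $U_\sigma$ of $\tv(\boldsymbol\Sigma)$, use the identification from Lemma~\ref{sec:lem-stacky-quot} to recognize $\E$ as the invariant pushforward, and for the $\xray$-filtrations reduce via a $T'$-equivariant splitting to the rank-one case, where the jumping index is governed by the pairing $u(\gamma)$ (the paper phrases this as the identity $v(F(\gamma))=F^*(v)(\gamma)$ applied to a generating section of weight $v$, which is exactly your Lemma~\ref{lemma:tdiv}/Remark~\ref{sec:rem-rank-one-filt} computation). Your remarks about gluing, condition~\eqref{dagger}, and simultaneous diagonalization are not needed for the proposition as stated and can be dropped.
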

\begin{proof}
The first statement is local with respect to cones in $\Sigma'$, so without loss of generality we may assume that $\Sigma'$ consists of a single cone $\sigma$.
The proof of Lemma~\ref{sec:lem-stacky-quot} implies that we have the following commutative diagram of quotients:
 \[ \xymatrixcolsep{5pc}\xymatrix{
  \tv(\sigma) \ar[d]^{\pi=\cdot/T}\ar@^{<-)}[r]^i    &  \widetilde{U}_\sigma \ar[d]^{\cdot/G_\sigma} \\
  Z \ar@{=}[r] & Z }\]
Now, we have a bijection between $\overline T$-equivariant vector bundles on $Z$ and $T^0_\sigma$ -equivariant vector bundles on $\widetilde{U}_\sigma$, realized by pullback and pushforward of the $G_\sigma$-invariant part, respectively. Moreover, we have a bijection between $\overline T$-equivariant vector bundles on $Z$ and $T'$-equivariant vector bundles on $\tv(\sigma)$. This correspondence is realized by pullback and pushforward of the $T$-invariant part, respectively.

We would like to show that $\E=\pi_*(\V)^T$ corresponds to the chosen filtrations. By our construction, it is equivalent to show that  
$i^*(\V)$ corresponds to the given filtrations, but this is well known for toric vector bundles.

Now we have to check that $\E^{\gamma}(j)$ actually corresponds to the filtrations $E^\rho(i) \cap E^\gamma(j)$. By the same argument as in the proof of Lemma~\ref{lemma:stackfil1}, we may reduce to the situation that $\V$ has rank one and is generated by a $T'$-semi-invariant section of weight $v$. The section is also semi-invariant with respect to the $T$-action. The corresponding weight is given as $F^*(v)$, where $F^*:M' \rightarrow M$ is the dual homomorphism to $F$ in (\ref{eq:downgrade-sequence}). 

Now, we only have to show that the rank of $\E^\gamma(j)$ jumps (from $0$ to $1$) at the same index as the dimension of $E^{\gamma}(j)$. To prove this we consider $X$ as both a $T'$-variety and as a $T$-variety. In both cases we have $\gamma \in \xray_{T'}$ and $F(\gamma) \in \xray_{T}$, respectively. Now, $$v(F(\gamma))=F^*(v)(\gamma)$$
holds and the proof of Lemma~\ref{lemma:stackfil1} shows that the jumping position of $E^{\gamma}(j)$ is calculated by the left-hand side and that of $\E^{\gamma}(j)$ by the right-hand side.
\end{proof}

\begin{ex}
\label{sec:example-downgrades-blowup}
We consider $X$ to be the blowup of $\PP^2$ in one point and the $K^*$-action given by multiplication on the first coordinate. This situation corresponds to the fan in $N'_\QQ=\QQ^2$ with rays  $\rho_1=\QQ_{\geq 0}\cdot e_1$, $\rho_2=\QQ_{\geq 0}\cdot (e_1+e_2)$, $\rho_3=\QQ_{\geq 0}\cdot e_2$, $\rho_4=\QQ_{\geq 0}\cdot (-e_1-e_2)$ along with the exact sequence
\begin{equation}
  \label{eq:downgrade-sequence1}
   \xymatrixcolsep{4pc}\xymatrix@1{0  \ar[r] & \ZZ \ar[r]^{F={1\choose 0}} & N'=\ZZ^2  \ar[r]^{\mu=(0,1)} & \overline N =\ZZ \ar[r] & 0}
\end{equation}
The situation is sketched in Figure~\ref{fig:project}(a).
  \begin{figure}[htbp]
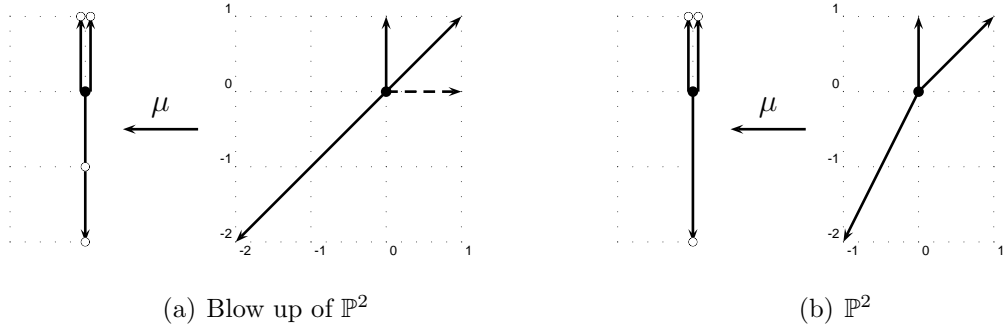

    \centering
    \subfigure[Blow up of $\PP^2$]{\fanbproject} \hspace{1cm}
    \subfigure[$\PP^2$]{\fanfproject}
    \caption{Two toric quotient maps}
    \label{fig:blowupquotient}
\label{fig:project}
  \end{figure}
Now, the fan $\Sigma'$ of $W$ consists only of the rays $\rho_2$, $\rho_3$ and $\rho_4$, respectively. Since $e_1$ lies in the kernel of $\mu$, the corresponding ray $\rho_1$ is contracted by $\mu$ and has to be removed, as well as the full-dimensional cones. Now, the corresponding prefan is the same as in Example~\ref{sec:exmp-double-point}. The stacky structure is trivial, i.e. $\Sigma^0$ always gives the maximal lattice (in the figure the semi-group $\Sigma^0(\sigma) \cap \mu(\sigma)$ is sketched by the $\circ$-symbols).

By \cite[2.3 Example 5]{klyachko:89a} or Theorem \ref{thm:tangent}, the tangent bundle on $X$ is given by the filtrations
\begin{equation}
  \label{eq:tangentbundle}
  E^\rho(i)=\begin{cases}
N'\otimes k & i \leq 0\\
\langle \rho \rangle & i =1\\
0 & i>1
\end{cases}.
\end{equation}

By the downgrading procedure $\E$ is given by these filtrations for the rays $\rho_2$, $\rho_3$, $\rho_4$. The set $\xray$ consists of $\rho_1$, and the corresponding filtration of $\E$ has one non-trivial step $\E^{\rho_1}(1)$ which is a rank one sub-bundle $\F \subset \E$ given by the filtrations $F^{\rho_j}(i)$ for $j=2,3,4$ with
\begin{align*}
  F^{\rho_j}(i)&=\begin{cases}
\langle \rho_1 \rangle & i \leq 0\\
0 & i > 0 
\end{cases}\\
\end{align*}
\end{ex}
It is not hard to see that 
\begin{equation}
  \label{eq:blowup-esplitting}
  \E \cong \CO([0_1]+[\infty]) \oplus \CO([0_2])
\end{equation}
 and $\F \cong \CO_Z$. Here $0_1,0_2$ denote the two instances of the origin. The inclusion $\CO_Z\cong \F \hookrightarrow \E$ is given by sending $1$ to $e_1-e_2$, where $e_1$ and $e_2$ are sections with value $1$ from the line bundles in the decomposition (\ref{eq:blowup-esplitting}). 
 This agrees with the description of $\T_X$ coming from Theorem \ref{thm:tangent}.
 \begin{ex}

Now consider $\PP^2$ with a $K^*$-action given by the weights $1,-1,0$. The toric variety $\PP^2$ is given by the fan $\Sigma$ in $N'_\QQ=\QQ^2$ spanned by the rays $\rho_1= \QQ_{\geq 0} \cdot(e_1+e_2)$, $\rho_2= \QQ_{\geq 0} \cdot(e_2)$ and $\rho_3= \QQ_{\geq 0} \cdot(-e_1-2e_2)$. With this fan the $K^*$ action corresponds also to the exact sequence~(\ref{eq:downgrade-sequence1}). Now, $\Sigma'$ consists of all non-maximal cones, since no ray is contracted by $\mu$. The quotient prefan is the same as in the previous example, but the stacky structure differs: $\Sigma^0(\rho_3)=2\overline N$. The situation is sketched in Figure~\ref{fig:project}(b).

The tangent bundle is again given by the filtrations $E^{\rho_j}(i)$ from (\ref{eq:tangentbundle}) for $j=1,2,3$. These filtrations also define the bundle $\E$ on $Z$. 
For this example $\xray$ is empty and we have no filtration to consider.
\end{ex}

\section{Equivariant Deformations}\label{sec:def}
\subsection{Families of Vector Bundles}
Let $X$ be a $T$-variety and $\V$ a $T$-equivariant vector bundle. Let $S$ be a scheme. We would like to construct $T$-equivariant deformations of $\V$ over $S$, i.e. a $T$-equivariant vector bundle $\tV$ on $X\times S$ restricting to $\V$.

Let $W$ be the DM-locus of $X$, $Z=[W/T]$. Now, any equivariant vector bundle on $X\times S$ corresponds to some $(Z\times S,\xray)$-filtration by Theorem \ref{thm:F}. As one might suspect, such a vector bundle restricts to $\V$ if and only if the corresponding $(Z\times S,\xray)$-filtration restricts to the $(Z,\xray)$-filtration for $\V$.

\begin{prop}\label{prop:family}
	Let $\tV$ be an equivariant vector bundle on $X \times S$ with corresponding $(Z\times S,\xray)$-filtration $(\tE,\{\tE^\rho(i)\})$. Then for any point $t \in S$ with fiber $X_t\cong X$ in $X\times S$, $\tV|_{X_t}$ corresponds to the $(Z,\xray)$-filtration
	$(\tE|_{Z_t},\{\tE|_{Z_t}^\rho(i)\})$.
\end{prop}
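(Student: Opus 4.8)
The plan is to reduce everything to the description of the functor $\bF$ from Section~\ref{sec:gen} and to commute two operations: restriction to a fiber and the formation of the invariant pushforward used to build $\bF(\tV)$. Fix a closed point $t\in S$; abbreviate the inclusion $\iota_t\colon X\cong X_t\hookrightarrow X\times S$, and correspondingly $Z_t\hookrightarrow Z\times S$. Writing $\pi\colon W\times S\to Z\times S$ and $\pi_t\colon W\to Z$ for the two quotient maps, the base-change square relating $\pi$ and $\pi_t$ along $\iota_t$ is Cartesian, and $\pi$ is (by construction of the quotient stack) the quotient of a $T$-principal bundle locally on $Z\times S$. Since $t$ lies over a field-valued point, $\iota_t$ is flat onto its image locally; hence flat base change gives a canonical isomorphism $\iota_t^*\,\pi_*(-)_0 \cong (\pi_t)_*\bigl(\iota_t^*(-)\bigr)_0$, compatible with the $M$-gradings. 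Applying this to $\V := \tV|_{W\times S}$ yields $\iota_t^*\tE \cong \bF(\tV|_{X_t})$ on the level of the underlying vector bundle, i.e. $\tE|_{Z_t}\cong\E$ where $(\E,\{\E^\rho(i)\})=\bF(\tV|_{X_t})$.

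Next I would check the filtration steps. Recall from Remark~\ref{sec:rem-alternativ-V-rho} that $\tE^\rho(i)=\pi_*(\tV^\rho(i))_0$, where $\tV^\rho(i)\subseteq\tV|_{W\times S}$ consists of sections vanishing to order at least $i$ along $D_\rho\times S$. The key point is that forming $\tV^\rho(i)$ commutes with restriction to the fiber $X_t$: working locally on an affine invariant $U_\rho\times S$ as in the proof of Theorem~\ref{thm:F}, where $D_\rho$ is cut out by a single equation $f$ not involving the $S$-coordinates, the condition ``vanishing order $\geq i$ along $D_\rho\times S$'' is cut out by powers of $f$, and this is preserved by tensoring with the residue field of $t$. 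Equivalently, one can use Proposition~\ref{prop:cover} to assume $\tV$ is a direct sum of equivariant line bundles locally around a general point of $D_\rho\times S$, reduce to the rank-one case, and invoke Remark~\ref{sec:rem-rank-one-filt}: the jumping position $j=-v(\rho)$ depends only on the weight $v$ of a local generator, which is unchanged under restriction to $X_t$. Combining this with the flat base-change isomorphism of the previous paragraph then identifies $\iota_t^*\tE^\rho(i)$ with $\E^\rho(i)$ inside $\E$, i.e. $\tE|_{Z_t}^\rho(i)=\E^\rho(i)$, which is exactly the claim.

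The main obstacle I anticipate is not any single hard estimate but rather the bookkeeping needed to verify that the base-change isomorphism is genuinely compatible with all the structures in play at once: the $M$-grading, the inclusions $\tE^\rho(i)\hookrightarrow\tE$, and the subsheaf $\tV^\rho(i)\hookrightarrow\tV|_{W\times S}$. Flat base change for pushforward along $\pi$ is the technical heart, and one must be slightly careful that $\iota_t$ is an honest flat (even regular) embedding of the fiber so that the base-change map is an isomorphism; since $X_t\cong X$ is cut out in $X\times S$ by the maximal ideal of a closed point of $S$ (which we may take to be a complete intersection after localizing, or simply argue locally on $S$ affine), this is unproblematic. Once these compatibilities are in place, the statement follows formally, and the proof can be kept short by explicitly reducing to the rank-one local model as in the proofs of Lemmas~\ref{lemma:stackfil1} and~\ref{lemma:stackfil2}.
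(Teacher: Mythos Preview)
Your approach is essentially the same as the paper's: both set up the Cartesian square relating $\pi$, $\pi_t$, and the fiber inclusions, then handle the filtration steps by reducing to the rank-one case via Proposition~\ref{prop:cover} and observing that the weight $v$ of a local generator---and hence the jumping position $-v(\rho)$ from Remark~\ref{sec:rem-rank-one-filt}---is unchanged under restriction to $X_t$.

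There is one technical slip worth flagging. You invoke flat base change, writing that ``$\iota_t$ is flat onto its image locally.'' This is false: the inclusion of the fiber over a closed point $t\in S$ is a closed immersion, which is almost never flat (it is flat only when $t$ is an isolated point of $S$). The base-change isomorphism $\iota_t^*\,\pi_*(-)\cong(\pi_t)_*\,\iota_t^*(-)$ does hold, but for a different reason: $\pi$ is affine (being a $T$-torsor), and pushforward along an affine morphism commutes with \emph{arbitrary} base change. Alternatively, the paper sidesteps the issue entirely by going through pullback rather than pushforward: from $\tV|_{W\times S}=(\pi\times\id)^*\tE$ (Lemma~\ref{lemma:stacky}) and the Cartesian square one reads off $\tV|_{W_t}=\pi_t^*(\tE|_{Z_t})$ directly. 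Either fix is routine, and once it is in place the rest of your argument goes through exactly as the paper's does.
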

\begin{proof} 
We have the following commutative diagram
$$\xymatrix{
W_t \ar[d]^\pi \ar@{^{(}->}[r]^\iota &W \times S\ar[d]^{\pi \times \id}\\
Z_t \ar@{^{(}->}[r]^j & Z \times S 
}.$$
Hence, we obtain 
\begin{align*}
  \tV{|_{W_t}}  &= \iota^*\tV_{|_{W \times X}}\\
                          &= \iota^*(\pi \times S)^*\tE = \pi^*j^*\tE\\
                          &=\pi^*\tE_{|_{Z_t}}
\end{align*}
It remains to show that the filtrations $\{\tE|_{Z_t}^\rho(i)\}$ actually correspond to $\tV{|_{X_t}}$. 

As in the proof of Lemma~\ref{lemma:stackfil1} we reduce to the case that $\tV$ is an equivariant line bundle with sections locally generated by elements $s$ of weight $v$. Now, the filtration consists only of one step jumping from $0=\tE^\rho(i+1)$ to $\tE=\tE^\rho(i)$. Hence, the position $i$ of this step determines the filtration. We may reduce to the local situation where $\tV$ is generated by a single section $s$ of weight $v \in M$, now the jumping position is given by $-v(\rho)$ (see Remark \ref{sec:rem-rank-one-filt}). But when restricting to $X_t$ the weight of $s$ does not change. Hence, the jumping position for the filtration corresponding to $\tV{|_{X_t}}$ is $-v(\rho)$ as well and $\tE|_{Z_t}^\rho(i)$ is indeed the correct filtration for $\tV{|_{X_t}}$.
\end{proof}
\begin{ex}[First Order Deformations]
	Let $X$ be a $T$-variety and $\V$ an equivariant bundle on $X$. Let $S$ be the fat point $\spec K[t]/t^2$. Then it is well-known that equivalence classes of families $\tV$ on $X\times S$ together with a map $\tV\to \V$ restricting to an isomorphism correspond to extension classes in $\Ext_{\CO_X}^1(\V,\V)$, see e.g. \cite[Theorem 2.7]{hartshorne:10a}. Indeed, to an extension
$$	\begin{CD}
0 @>>> \V @>\phi_1>> \tV @>\phi_2>> \V @>>>0
	\end{CD}
$$
one associates the map  $\phi_2:\tV\to \V$ on $X\times S$. Here, the $\CO_{X\times S}$-module structure on $\tV$ is given by $t\cdot s=(\phi_1\circ \phi_2) (s)$. In particular, \emph{equivariant} families $\tV\to \V$ correspond to equivariant extensions, that is, the degree zero part of $\Ext_{\CO_X}^1(\V,\V)$.

Now, let $\{\E^\rho(i)\}$ be the $(Z,\xray)$-filtration corresponding to $\V$. Let $$\Ext^1_{(Z,\xray)}(\{\E^\rho(i)\},\{\E^\rho(i)\})$$ denote the set of classes of extensions of 
the $(Z,\xray)$-filtration $\{\E^\rho(i)\}$ by itself to an $X$-compatible $(Z,\xray)$-filtration. Then by the above proposition, we have the following commutative diagram:
$$
\begin{CD}
	\Ext_{\CO_X}^1(\V,\V)_0 @>>> \{\phi:\tV\to \V\ |\ \phi|_{t=0}\ \textrm{is an isomorphism}\}/\sim\\ 
	@V\bF VV @V\bF VV\\
	\Ext_{(Z,\xray)}^1\left(\{\E^\rho(i)\},\{\E^\rho(i)\}\right) @>>> \left\{\psi:\widetilde{\E}^\rho(i)\to \E^\rho(i)\ \Big|\ \begin{array}{c}\psi|_{t=0}\ \textrm{is an isomorphism}\\
		\widetilde{\E}^\rho(i)\ 		\textrm{is $X$-compatible}
	\end{array}	
	\right\}/\sim\\ 
.\end{CD}
$$
The horizontal arrows are bijections, and if $X$ satisfies \eqref{dagger}, then so are the vertical arrows. Indeed, by Theorem \ref{thm:compatible}, the left vertical arrow is bijective, from which it follows that the right arrow must be as well.
Hence, we may study so-called first order deformations of $\V$ in terms of objects on the quotient $Z$.
\end{ex}

\subsection{Deformations of the Tangent Bundle}
One general problem  in deformation theory is to lift a given infinitesimal deformation to higher order. However, even for toric vector bundles, there are often obstructions to lifting deformations. In fact, moduli spaces of toric vector bundles can have arbitrarily bad singularities, see \cite[Section 4]{payne:08a}.
In the following, we will restrict our attention to deformations of the tangent bundle; these play an essential role in $(0,2)$ mirror symmetry, see e.g. \cite{melnikov:11a}.
For the case of toric Fano varieties, we may easily calculate obstructions to lifting:

\begin{theorem}\label{thm:obstructions}
  Let $X$ be a smooth, complete toric Fano variety corresponding to the fan $\Sigma$. For each ray $\rho\in \Sigma^{(1)}$, let $D_\rho$ denote the associate invariant prime divisor. 
Then for $i\geq 2$,
$$\Ext^i(\T_X,\T_X)\cong \bigoplus_{\gamma\neq\rho\in\Sigma^{(1)}}H^{i-1}(D_\rho,\CO_{D_\rho}(D_\gamma)).$$ In particular, if each divisor $D_\rho$ is Fano, then the versal deformation of the bundle $\T_X$ is smooth.
\end{theorem}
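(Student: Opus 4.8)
The plan is to compute $\Ext^i(\T_X,\T_X)$ by exploiting the filtration description from Theorem~\ref{thm:tangent}, together with an explicit injective/flasque-type resolution adapted to the toric setting. Since $X$ is a smooth toric variety, the Euler-type exact sequence gives a short exact sequence
$$
\begin{CD}
0 @>>> \CO_X @>>> \bigoplus_{\rho\in\Sigma^{(1)}} \CO_X(D_\rho) @>>> \T_X @>>> 0
\end{CD}
$$
(valid because $X$ is a smooth \emph{complete Fano}, so the generalized Euler sequence of \cite{batyrev:94a}-type holds; more precisely one uses the short exact sequence $0\to\Omega_X^1\to\bigoplus\CO_X(-D_\rho)\to\CO_X^{\,\mathrm{Cl}(X)\otimes\CO_X}\to 0$ dualized, with the $H^1$-terms for a Fano forcing the relevant maps to be as expected). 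Applying $\Hom(\T_X,-)$ to this sequence and using $\Ext^i(\T_X,\CO_X)\cong H^i(X,\Omega_X^1)$, which vanishes for $i\geq 2$ on a toric Fano by Bott/Demazure vanishing, one reduces the computation of $\Ext^i(\T_X,\T_X)$ for $i\geq 2$ to $\Ext^i\big(\T_X,\bigoplus_\rho\CO_X(D_\rho)\big)\cong\bigoplus_\rho H^i(X,\Omega_X^1(D_\rho))$.

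The main step is then to compute $H^i(X,\Omega_X^1(D_\rho))$ for $i\geq 2$ and identify it with $\bigoplus_{\gamma\neq\rho}H^{i-1}(D_\rho,\CO_{D_\rho}(D_\gamma))$. I would do this via the conormal sequence of the smooth divisor $D_\rho\subset X$,
$$
\begin{CD}
0 @>>> \CO_X(-D_\rho) @>>> \Omega_X^1 @>>> \Omega_X^1|_{D_\rho} @>>> 0
\end{CD}
$$
no wait — more useful is the residue/Poincaré sequence for the log pair:
$$
\begin{CD}
0 @>>> \Omega_X^1 @>>> \Omega_X^1(\log D_\rho) @>\mathrm{res}>> \CO_{D_\rho} @>>> 0.
\end{CD}
$$
Twisting by $\CO_X(D_\rho)$ and taking cohomology, the terms $H^i(X,\Omega_X^1(\log D_\rho)(D_\rho))$ should vanish for $i\geq 2$ (again by toric Bott vanishing for the log-tangent sheaf on a Fano, since $\Omega_X^1(\log D_\rho)$ splits equivariantly in a way controlled by the combinatorics of the rays other than $\rho$), which yields $H^i(X,\Omega_X^1(D_\rho))\cong H^{i-1}(D_\rho,\CO_{D_\rho}(D_\rho|_{D_\rho}))$ — and then one observes $\CO_{D_\rho}(D_\rho|_{D_\rho})$ decomposes combinatorially in terms of the $D_\gamma|_{D_\rho}$ for $\gamma\neq\rho$ adjacent to $\rho$, giving the stated direct sum over $\gamma\neq\rho$. (Alternatively, and perhaps more cleanly, one applies $\Hom(-,\T_X)$ directly to the Euler sequence, using $\Ext^i(\CO_X(D_\rho),\T_X)=H^i(X,\T_X(-D_\rho))$ and the exact sequence $0\to\T_X(-D_\rho)\to\T_X\to\T_X|_{D_\rho}\to 0$ together with $\T_X|_{D_\rho}\cong\T_{D_\rho}\oplus\CO_{D_\rho}(D_\rho)$ from smoothness, and the vanishing $H^i(X,\T_X)=H^i(D_\rho,\T_{D_\rho})=0$ for $i\geq 1$ on Fano toric varieties.)

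The hard part — and the place where the hypotheses are really used — will be the systematic vanishing statements: one needs $H^{\geq 1}(X,\T_X)=0$, $H^{\geq 2}(X,\Omega_X^1)=0$, $H^{\geq 1}(D_\rho,\T_{D_\rho})=0$, all of which hold because $X$ and each $D_\rho$ are complete toric varieties that are Fano (for the last one, $D_\rho$ is itself a smooth complete toric variety, and being Fano is exactly what makes $H^{\geq 1}(\T_{D_\rho})$ vanish — this is precisely the ``$D_\rho$ Fano'' hypothesis in the final sentence). I would organize the bookkeeping by restricting everything to graded pieces in the $M$-grading and invoking the combinatorial criteria for toric Bott vanishing. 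Granting these vanishings, the identification is a diagram chase through the two short exact sequences above. Finally, for the last assertion: if every $D_\rho$ is Fano then the argument of the preceding paragraph shows $\Ext^i(\T_X,\T_X)=0$ for all $i\geq 2$ (each summand $H^{i-1}(D_\rho,\CO_{D_\rho}(D_\gamma))$ vanishes for $i\geq 2$ since $\CO_{D_\rho}(D_\gamma)$ is a nef — indeed globally generated — line bundle on a Fano toric variety, hence has no higher cohomology), so the obstruction space vanishes and the versal deformation of $\T_X$ is smooth.
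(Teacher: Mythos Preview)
Your overall framework is the same as the paper's---use the generalized Euler sequence to reduce $\Ext^i(\T_X,\T_X)$ to cohomology of twists of line bundles, then pass to the divisors $D_\rho$ via the ideal-sheaf sequence---but your second step contains a genuine error. Both routes you sketch for computing $H^i(X,\Omega_X^1(D_\rho))$ (equivalently $H^i(X,\T_X(-D_\rho))$) land at $H^{i-1}(D_\rho,\CO_{D_\rho}(D_\rho))$, a \emph{single} line bundle; your claim that this ``decomposes combinatorially in terms of the $D_\gamma|_{D_\rho}$'' is simply false---it is one invertible sheaf, not a direct sum. So you never produce the index $\gamma$ appearing in the theorem. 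In your alternative route there are two further problems: the splitting $\T_X|_{D_\rho}\cong \T_{D_\rho}\oplus \CO_{D_\rho}(D_\rho)$ of the normal bundle sequence is not automatic, and the vanishing $H^{\geq 1}(D_\rho,\T_{D_\rho})=0$ requires $D_\rho$ Fano, which is \emph{not} assumed for the first assertion of the theorem.

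The fix is to use the Euler sequence a second time, exactly as the paper does. After obtaining $\Ext^i(\T_X,\T_X)\cong \bigoplus_\rho H^i(X,\T_X(-D_\rho))$ for $i\geq 2$ (from $H^{\geq 1}(X,\T_X)=0$), twist the dual Euler sequence
\[
0\longrightarrow \CO_X^{\,\rk\Pic(X)}\longrightarrow \bigoplus_\gamma \CO_X(D_\gamma)\longrightarrow \T_X\longrightarrow 0
\]
by $\CO_X(-D_\rho)$. Since $H^j(X,\CO_X(-D_\rho))=0$ for all $j$ (from the ideal-sheaf sequence and vanishing of higher cohomology of structure sheaves on complete toric varieties), this gives $H^i(X,\T_X(-D_\rho))\cong \bigoplus_\gamma H^i(X,\CO_X(D_\gamma-D_\rho))$. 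Now twist $0\to\CO_X(-D_\rho)\to\CO_X\to\CO_{D_\rho}\to 0$ by $\CO_X(D_\gamma)$; the Fano hypothesis on $X$ gives $H^{\geq 1}(X,\CO_X(D_\gamma))=0$, whence $H^i(X,\CO_X(D_\gamma-D_\rho))\cong H^{i-1}(D_\rho,\CO_{D_\rho}(D_\gamma))$ for $i\geq 2$. The term $\gamma=\rho$ contributes $H^i(X,\CO_X)=0$, which is why the sum runs over $\gamma\neq\rho$. (Also, a minor point: your first displayed Euler sequence has kernel $\CO_X$; it should be $\CO_X^{\,\rk\Pic(X)}$, as you note parenthetically.) Your final paragraph on the ``$D_\rho$ Fano $\Rightarrow$ smooth versal deformation'' conclusion is fine.
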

\begin{proof}
  We will make use of the generalized Euler sequence \cite{euler}:
  \begin{equation}\label{eqn:euler}
    \begin{CD}
      0@>>> \Omega_X @>>> \bigoplus_\rho\CO(-D_\rho) @>>> \CO_X\otimes \Pic(X) @>>>0.
    \end{CD}
 \end{equation}
Twisting by $\T_X$ and using the long exact sequence of cohomology, we can conclude that for $i\geq 2$,
$$
\Ext^i(\T_X,\T_X)\cong H^i(X,\T_X\otimes\Omega_X)\cong H^i(X,\T_X\otimes  \bigoplus_\rho\CO(-D_\rho)). 
$$
Indeed, this follows from the fact that $H^i(X,\T_X)=0$ for $i>0$, see \cite[Proposition 4.2]{brion:96a}.

Now, for any divisor $D_\rho$, we consider the short exact sequence
\begin{equation}\label{eqn:restr} 
   \begin{CD}
      0@>>> \CO(-D_\rho) @>>> \CO_X @>>> \CO_{D_\rho} @>>>0.
    \end{CD}
 \end{equation}
 Since $D_\rho$ is a complete toric variety (cf. \cite[Section 3.1]{fulton:93a}) and higher cohomology groups of the structure sheaf vanish on a toric variety (cf. \cite[Section 3.5]{fulton:93a}), the long exact sequence of cohomology implies that $H^i(X,\CO(-D_\rho))=0$ for all $i$.
Twisting the sequence dual to \eqref{eqn:euler} by $\bigoplus_\rho\CO(-D_\rho)$ and using this vanishing gives
$$
H^i(X,\T_X\otimes  \bigoplus_\rho\CO(-D_\rho))\cong H^i(X,\bigoplus_{\gamma,\rho}\CO(D_\gamma-D_\rho)).
$$

 On the other hand, we may consider the twist of the sequence \eqref{eqn:restr} by $\CO(D_\gamma)$ for any divisor $D_\gamma\neq D_\rho$.
 Since $X$ is Fano, the higher cohomology groups of $\CO_X(D_\gamma)$ vanish, giving an isomorphism
 \begin{equation}\label{eqn:rest2}
 H^i(X,\CO(D_\gamma-D_\rho))\cong H^{i-1}(D_\rho,\CO_{D_\rho}(D_\gamma))
 \end{equation}
 for $i\geq 2$.
The first claim now follows.

The second claim follows from the vanishing of higher cohomology for prime invariant divisors on toric Fano varieties.
\end{proof}

\begin{rem}
  The graded pieces of the cohomology groups appearing in the above theorem may be computed by counting connected components of certain graphs, see \cite[Proposition 1.1]{ilten:surfacedef}.
\end{rem}

Now, if we restrict our attention to a toric Fano surface $X$, the tangent bundle $\T_X$ has unobstructed deformations. Indeed, each invariant prime divisor is isomorphic to $\mathbb{P}^1$ and hence Fano. Thus, the versal deformation of $\T_X$ is smooth, and it is an interesting challenge to describe a versal family of bundles. 

There are exactly four smooth toric Fano surfaces with non-rigid tangent bundle: $\PP^1\times \PP^1$, and the blowup of $\PP^2$ in $n=1,2,3$ points, which we denote by $\Bl_n\PP^2$.
The corresponding fans are depicted in Figure \ref{fig:fanosurface}.
For each surface, one may compute the graded pieces of the space $\Ext^1(\T_X,\T_X)$ of first order deformations of the tangent bundle using \cite[4.6]{klyachko:89a}, assisted by the software \cite{ilten:toricvb}.
The result of this computation is depicted in Figure \ref{fig:fanosurfacedeg}.
Each dot corresponds to a one-dimensional homogeneous piece of $\Ext^1(\T_X,\T_X)$ in the given weight.

Our description of equivariant bundles may be used to construct  explicit $K^*$-equivariant families of vector bundles whose restrictions to a fat point span the graded pieces
of the vector space $\Ext^1(\T_X,\T_X)$. This provides a sort of skeleton of the versal deformation of $\T_X$.
In the remainder of the section, we do this explicitly for $\PP^1\times \PP^1$ and the blowup of $\PP^2$ in one point. Similar constructions may be made for $\Bl_2(\PP^2)$ and $\Bl_3(\PP^2)$.

\begin{figure}
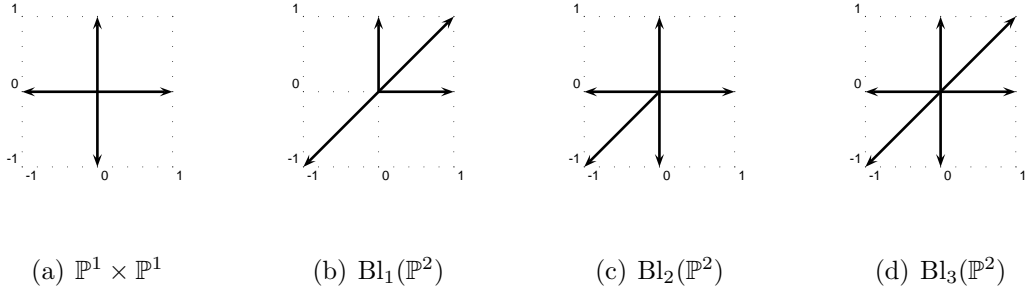

	\begin{center}
		\subfigure[$\PP^1\times\PP^1$]{\fana}
		\subfigure[$\Bl_1(\PP^2)$]{\fanb}
		\subfigure[$\Bl_2(\PP^2)$]{\fanc}
		\subfigure[$\Bl_3(\PP^2)$]{\fand}
		\caption{Fans smooth toric Fano surfaces}\label{fig:fanosurface}
\end{center}
\end{figure}
\begin{figure}
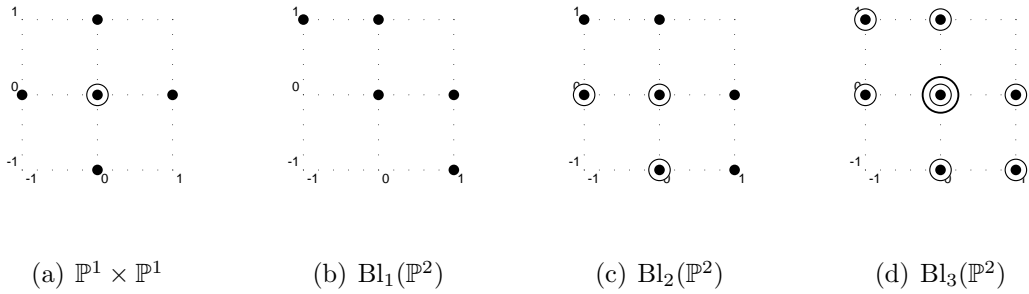

	\begin{center}
		\subfigure[$\PP^1\times\PP^1$]{\dega}
		\subfigure[$\Bl_1(\PP^2)$]{\degb}
		\subfigure[$\Bl_2(\PP^2)$]{\degc}
		\subfigure[$\Bl_3(\PP^2)$]{\degd}
		\caption{Degrees of $\Ext^1(\T_X,\T_X)$ for smooth toric Fano surfaces}\label{fig:fanosurfacedeg}
\end{center}
\end{figure}

\begin{ex}[$X=\PP^1\times\PP^1$]\label{ex:p1p1}
We will consider the subtori $T\subset (K^*)^2$ given as $T=\ker u$ for the characters $u=(1,0)$ and $u=(0,1)$. For both choices of $u$, the $T$-invariant part of $\Ext^1(\T_X,\T_X)$ has dimension $4$, and together they span all of $\Ext^1(\T_X,\T_X)$. Due to symmetry, we will focus on $u=(1,0)$.

With notation as in Section \ref{sec:gen}, $W=\PP^1\times K^*$ and $Z=[W/T]=\PP^1=\Proj K[y_0,y_1]$. The set $\xray$ consists of two elements, $\rho_1=1$ and $\rho_2=-1$. The tangent bundle $\T_X$ corresponds to the $(Z,\xray)$-filtration $(\E,\{\E^\rho(i)\})$, where 
$\E=\CO\oplus\CO(2)$ and 
$$
\E^\rho(i)=\begin{cases}
\CO\oplus\CO(2) & i \leq 0\\
\CO & i =1\\
0 & i>1
\end{cases}.
$$

Let $S=\spec K[t_0,t_1,t_2,t_3,(1-t_0^2)^{-1}]$. Consider the rank two vector bundle $\widetilde{\E}$ on $Z\times S$ with generators $e_1,e_2$ on $y_1\neq 0$ and $f_1,f_2$ on $y_0\neq 0$ and transition functions given by
\begin{align*}
	e_1=f_1+t_0y^{-1}f_2 \qquad e_2=t_0y^{-1}f_1+y^{-2}f_2,
\end{align*}
where $y=y_0/y_1$. 
This bundle is a versal family for $\E$:
for $t_0=0$ it restricts to $\E$, and for $t_0\neq 0$ it restricts to $\CO(1)\oplus \CO(1)$.

We now define two filtrations of $\widetilde\E$: $\widetilde{\E}^\rho(i)=0$ if $i>1$, $\widetilde{\E}^\rho(i)=\widetilde{\E}$ if $i\leq 0$, and
\begin{align*}
	\widetilde{\E}^{\rho_1}(1)=\CO_{\PP^1\times S}\cdot (e_1+t_1e_2+t_2ye_2+t_3f_2)\qquad
	\widetilde{\E}^{\rho_2}(1)=\CO_{\PP^1\times S}\cdot e_1.
\end{align*}
These filtrations are compatible for $X\times S$, so we get a bundle $\widetilde\V$ on this space which restricts to $\T_X$ when $t_i=0$ for all $i$. These four deformation directions span the $T$-invariant part of 
$\Ext^1(\T_X,\T_X)$.

The bundle $\T_X$ is not simple (in fact it is split): $\Ext^0(\T_X,\T_X)$ has dimension two and is concentrated in degree zero. However, any deformation of $\T_X$ as above will be simple, so the number of moduli of the deformed bundle will drop to five.
\end{ex}

\begin{ex}[Blowup of $\PP^2$ in one point]
  Similar to Example \ref{ex:p1p1}, due to symmetry we only need to consider the subtori $T=\ker u$ for the characters $u=(1,0)$ and $u=(1,-1)$. For $u=(1,-1)$, 
$W$ is a nontrivial $K^*$-principal bundle over $Z=\PP^1=\Proj K[y_0,y_1]$ and $\xray$ consists of two elements, $\rho_1=1$ and $\rho_2=-1$. 
Let $S=\spec K[t_1,t_2,t_3]$. Consider the rank two vector bundle $\widetilde{\E}$ on $Z\times S$ with generators $e_1,e_2$ on $y_1\neq 0$ and $f_1,f_2$ on $y_0\neq 0$ and transition functions 
\begin{align*}
	e_1=y^{-1}f_1 \qquad e_2=y^{-1}f_2.
\end{align*}
This is just $\CO(1)\oplus \CO(1)$ extended horizontally along $S$. Now consider the following two filtrations of $\widetilde\E$: $\widetilde{\E}^\rho(i)=0$ if $i>1$, $\widetilde{\E}^\rho(i)=\widetilde{\E}$ if $i\leq 0$, and
\begin{align*}
	\widetilde{\E}^{\rho_1}(1)=\CO_{\PP^1\times S}\cdot (ye_1-e_2+t_1e_1+t_2e_2+t_3ye_2)\qquad
	\widetilde{\E}^{\rho_2}(1)=\CO_{\PP^1\times S}\cdot (ye_1-e_2).
\end{align*}
These filtrations are compatible for $X\times S$, so we get a bundle $\widetilde\V$ on this space which restricts to $\T_X$ when $t_i=0$ for all $i$. These three deformation directions span the $T$-invariant part of 
$\Ext^1(\T_X,\T_X)$.

For $u=(1,0)$, we are in the situation of Example~\ref{sec:example-downgrades-blowup} and we 
get that $Z$ consists of two copies $U_1$ and $U_2$ of $\PP^1$ glued together everywhere except at the origin, and the set $\xray$ consists of a single element $\rho$.
The bundle $\T_X$ corresponds to a $(Z,\xray)$-filtration $(\E,\{\E^\rho(i)\})$. Here, $\E$ is the bundle generated by $e_1^i,e_2^i$ on $U_i\cap[y_1\neq 0]$ and $f_1,f_2$ on $y_0\neq 0$ with transition functions given by
\begin{align*}
 &e_1^1=e_1^2=f_1\qquad &&e_2^1=y^{-2}f_2\\
 &          &&e_2^2=-y^{-1}f_1+y^{-2}f_2
\end{align*}
The filtration $\E^\rho(i)$ is given by 
$$
\E^\rho(i)=\begin{cases}
\E & i \leq 0\\
\CO_Z\cdot f_1 & i =1\\
0 & i>1
\end{cases}.
$$
Now let $S=\spec K[s,t,(s-1)^{-1}]$. Consider the rank two vector bundle $\widetilde{\E}$ on $Z\times S$ with generators $\widetilde{e}_1^i,\widetilde{e}_2^i$ on $U_i\cap[z_1\neq 0]$ and $\widetilde{f}_1,\widetilde{f}_2$ on $y_0\neq 0$ and transition functions 
\begin{align*}
  \widetilde{e}_1^1&=\widetilde{f}_1\qquad &\widetilde{e}_1^2&=\widetilde{f}_1\\
  \widetilde{e}_2^1&=y^{-2}\widetilde{f}_2-2(sy^{-1}+ty^{-2})\widetilde{f}_1\qquad &\widetilde{e}_2^2&=-y^{-1}\widetilde{f}_1+y^{-2}\widetilde{f}_2.
 \end{align*}
Define a filtration $$
\widetilde{\E}^\rho(i)=\begin{cases}
  \widetilde{\E} & i \leq 0\\
  \CO_{Z\times S}\cdot \widetilde{f}_1 & i =1\\
0 & i>1
\end{cases}.
$$
This filtration is compatible for $X\times S$, so we get a bundle $\widetilde\V$ on this space which restricts to $\T_X$ when $s=t=0$. These two deformation directions span the $T$-invariant part of 
$\Ext^1(\T_X,\T_X)$. The $s$-parameter gives a deformation of $\T_X$ in degree $(0,0)$ and the $t$-parameter gives a deformation of $\T_X$ in degree $(1,0)$.
\end{ex}

\bibliographystyle{alpha}
\bibliography{t-vb}

\end{document}